\numberwithin{equation}{section}
\newtheorem{thm}{Theorem}[section]
\newtheorem{prop}[thm]{Proposition}
\newtheorem{cor}[thm]{Corollary}
\newtheorem{exam}[thm]{Example}
\newtheorem{rem}[thm]{Remark}
\newtheorem{lem}[thm]{Lemma}
\newtheorem{assum}[thm]{Assumption}
\def\Capa{\mathop{\rm Cap}}
\begin{document}

\title{Transience of symmetric non-local Dirichlet forms}
\author{Yuichi Shiozawa\thanks{Department of Mathematics,
Graduate School of Science,
Osaka University, Toyonaka, Osaka, 560-0043,
Japan; \texttt{shiozawa@math.sci.osaka-u.ac.jp}}
\thanks{Supported in part by JSPS KAKENHI No.\ JP17K05299.}}
\maketitle

\begin{abstract}
We establish transience criteria for symmetric non-local Dirichlet forms on $L^2({\mathbb R}^d)$ 
in terms of the coefficient growth rates at infinity. 
Applying these criteria, we find a necessary and sufficient condition 
for recurrence of Dirichlet forms of symmetric stable-like 
with unbounded/degenerate coefficients. 
This condition indicates that 
both of the coefficient growth rates of small and big jump parts 
affect the sample path properties of the associated symmetric jump processes. 
\end{abstract}

\section{Introduction}

In this paper, we establish transience criteria 
for a symmetric non-local Dirichlet form on $L^2({\mathbb R}^d)$ 
in terms of the growth rates of the coefficients at infinity. 
Applying these criteria, 
we obtain a necessary and sufficient condition 
for recurrence of Dirichlet forms of symmetric stable-like 
with unbounded/degenerate coefficients. 

In the theory of stochastic processes, 
it is one of the important research subjects 
to study the global path properties of a symmetric Markov process. 
These properties are closely related to 
the analytic and geometric conditions 
such as the coefficient growth of the generator and 
the volume growth of the state space 
(see, e.g., \cite{G99,G21,P95,S17} and references therein for details).

Here we focus on recurrence and transience of symmetric Markov processes 
generated by Dirichlet forms. 
Roughly speaking, we mean by recurrence the nature of particles  
hitting every open set infinitely many times almost surely. 
On the other hand,  
we mean by transience the nature of particles escaping to infinity eventually almost surely. 
There are many recurrence criteria 
for symmetric Markov processes 
in terms of the analytic and geometric conditions as mentioned before
(see, for instance, \cite{G99,G21,I78,MUW12,OU20+,O95,O96,OU15,S94}); 
however, there are less such transience criterion 
in spite of their importance. 

For the Brownian motion on a spherically symmetric Riemannian manifold, 
we know a necessary and sufficient condition for recurrence 
in terms of the surface area of a ball 
(see, e.g., \cite[Theorem 5.1 in Section 9]{P95} or \cite[Proposition 3.1]{G99}). 
Ichihara \cite{I78} established sharp recurrence and transience criteria 
for the symmetric diffusion process on ${\mathbb R}^d$  
generated by a strongly local Dirichlet form with unbounded/degenerate coefficient 
(see also \cite[Section 6]{P95} for the exposition on this topic, 
and \cite{I82} and \cite[Section 9]{P95} for the Brownian motion on a Riemannian manifold).

As for symmetric jump processes on ${\mathbb R}^d$, 
\^Okura-Uemura \cite{OU15} and Okamura-Uemura \cite{OU20+} 
obtained a sufficient condition for recurrence 
in terms of the coefficient growth rates of the associated non-local Dirichlet forms. 
This condition is consistent with that of \cite{I78}. 
However, as far as the author knows, 
we have no corresponding condition for transience. 
Our objective in this paper is to establish transience criteria 
for a class of symmetric jump processes on ${\mathbb R}^d$ (Theorems \ref{thm:tran-sm} and \ref{thm:tran-bg}). 
Using these criteria, we can verify the sharpness of the results of \cite{OU15,OU20+} 
in terms of the polynomial growth rates of the coefficients 
(Corollary \ref{cor:tran-iff}). 

We note here that, in contrast with local Dirichlet forms, 
non-local ones have two kinds of the coefficients 
corresponding to the small and big jump parts.  
On account of this, we are interested in clarifying the relation between 
the growth rates of these coefficients and the global properties of sample paths. 
From this point of view, we established conservativeness criteria for 
symmetric jump processes (\cite{S15,SU14}). 
We also find in \cite{SW20+} a necessary and sufficient condition 
for the associated Markovian semigroups being $L^2$-compact. 
These two results say that 
the coefficient growth rate of the big jump is irrelevant to 
the validity of the conservativeness and $L^2$-compactness. 
Our results in this paper prove that 
both of the coefficients do affect the sample path properties.

To establish transience criteria for a non-local Dirichlet form, 
we adopt the so-called Lyapunov method; 
that is, we apply some test function to the generator and 
use the martingale characterization of the generator. 
This method is standard for diffusion processes (see, e.g., \cite{P95} and references therein) 
and applicable to jump processes (see, e.g., \cite[Lemma 4]{KST02}). 
Here we use this method separately to the generators 
corresponding to small and big jump parts. 
For the small jump part,  
we develop calculation in \cite[Proposition 2.5]{SW20+} 
to find the influence of the coefficient growth rate and spatial dimension on transience (Proposition \ref{prop:liyap}). 
Concerning the big jump part, 
we need elementary but involved calculation 
in order to ensure the existence of a test function  (Lemma \ref{lem:derivative}). 
Because of this, we impose a strong restriction on the big jump part 
(see Subsection \ref{subsect:big}).
Our approach for the big jump part is motivated by \cite[Lemmas 4 and 7]{KST02}. 

The rest of this paper is organized as follows: 
In Section \ref{sect:pre}, we summarize some materials on 
the Dirichlet form theory relevant to recurrence and transience.
In Section \ref{sect:tran}, we present transience criteria for symmetric jump processes 
and carry out the Lyapunov method. 
Some of the involved calculation are postponed to  Section \ref{sect:lem}. 
Appendix contains a version of the recurrence criterion for non-local Dirichlet forms 
established by \^Okura \cite{O95} and \^Okura-Uemura \cite{OU15}, 
and an application of this criterion. 
This appendix makes the proof of Corollary \ref{cor:tran-iff} self-contained.
 
\section{Preliminaries}\label{sect:pre}
We collect some materials 
related to recurrence and transience of Dirichlet forms from \cite{FOT11}. 

\subsection{Analytic part}
Let $(X,{\cal B},m)$ be a $\sigma$-finite measure space and 
$(u,v)=\int_X uv\,{\rm d}m$ for $u,v\in L^2(X;m)$.
Let $\{T_t\}_{t>0}$ be a strongly continuous Markovian semigroup on $L^2(X;m)$ 
and $({\cal E},{\cal F})$ an associated  Dirichlet form. 
Namely, $({\cal E},{\cal F})$ is a closed and Markovian symmetric form on $L^2(X;m)$ given by 
$${\cal F}=\left\{u\in L^2(X;m) \mid \lim_{t\rightarrow\infty}\frac{1}{t}(u-T_tu,u)<\infty \right\}, \ 
{\cal E}(u,u)=\lim_{t\rightarrow\infty}\frac{1}{t}(u-T_tu,u) \ \text{for $u\in {\cal F}$}$$
(see \cite[Lemma 1.3.4]{FOT11}).

Let 
$$S_t f=\int_0^t T_sf\,{\rm d}s, \quad f\in L^2(X;m).$$ 
Here the integral is defined as the strong convergence limit in $L^2(X;m)$ 
of the Riemann sum. 
We can then extend $S_t$ and $T_t$ from $L^1(X;m)\cap L^2(X;m)$ 
to $L^1(X;m)$ uniquely. We use the same notations for such extensions. 
Let 
$$L_+^1(X;m)=\left\{f\in L^1(X;m) \mid f\geq 0, \ \text{$m$-a.e.}\right\}$$
and 
$$Gf=\lim_{N\rightarrow\infty}S_N f, \quad f\in L_+^1(X;m).$$
We say that $\{T_t\}_{t>0}$ (or $({\cal E},{\cal F})$) is {\it recurrent} 
if for any $f\in L_+^1(X;m)$, 
either $Gf=0$ $m$-a.e.\ or $Gf=\infty$ $m$-a.e.\ holds. 
We also say that $\{T_t\}_{t>0}$ (or $({\cal E},{\cal F})$) is  {\it transient} if 
$Gf<\infty$ $m$-a.e.\ for any $f\in L_+^1(X;m)$. 

The next theorem is a comparison principle 
for recurrence and transience of Dirichlet forms. 
\begin{thm}\label{thm:comparison} {\rm (\cite[Theorem 1.6.4]{FOT11})}
Let $({\cal E}^{(1)},{\cal F}^{(1)})$ and  $({\cal E}^{(2)},{\cal F}^{(2)})$ be 
Dirichlet forms on $L^2(X;m)$ such that 
${\cal F}^{(2)}\subset {\cal F}^{(1)}$. 
If there exists $C>0$ such that  
$${\cal E}^{(1)}(u,u)\leq C{\cal E}^{(2)}(u,u) \quad \text{for any $u\in {\cal F}^{(2)}$,}$$
then the following assertions hold{\rm :}
\begin{enumerate}
\item[{\rm (1)}] if $({\cal E}^{(2)}, {\cal F}^{(2)})$ is recurrent, then so is  $({\cal E}^{(1)}, {\cal F}^{(1)})${\rm ;}
\item[{\rm (2)}] if $({\cal E}^{(1)}, {\cal F}^{(1)})$ is transient, then so is  $({\cal E}^{(2)}, {\cal F}^{(2)})$.
\end{enumerate}
\end{thm}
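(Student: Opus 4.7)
The plan is to reduce both assertions to intrinsic form-level characterizations of recurrence and transience (as developed in \cite{FOT11}) and then transport these characterizations along the domination ${\cal E}^{(1)} \le C\,{\cal E}^{(2)}$ using the inclusion ${\cal F}^{(2)} \subset {\cal F}^{(1)}$. The two ingredients I would use are: (a) $({\cal E},{\cal F})$ is recurrent iff there exists $\{u_n\} \subset {\cal F}$ with $u_n \to 1$ $m$-a.e.\ and ${\cal E}(u_n,u_n) \to 0$ (equivalently, $1$ lies in the extended Dirichlet space ${\cal F}_e$ with ${\cal E}(1,1)=0$); and (b) $({\cal E},{\cal F})$ is transient iff there exists a bounded, $m$-a.e.\ strictly positive $g \in L^1(X;m)$ such that
$$\int_X |u|\,g\,{\rm d}m \le \sqrt{{\cal E}(u,u)} \quad \text{for every } u \in {\cal F}.$$

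For (1), I would start from a recurrence approximating sequence $\{u_n\}\subset {\cal F}^{(2)}$ with $u_n \to 1$ $m$-a.e.\ and ${\cal E}^{(2)}(u_n,u_n) \to 0$. The inclusion ${\cal F}^{(2)} \subset {\cal F}^{(1)}$ places $\{u_n\}$ in ${\cal F}^{(1)}$, and the hypothesis gives ${\cal E}^{(1)}(u_n,u_n) \le C\,{\cal E}^{(2)}(u_n,u_n) \to 0$. Since the a.e.-convergence to $1$ is unaffected, characterization (a) directly yields recurrence of $({\cal E}^{(1)},{\cal F}^{(1)})$.

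For (2), transience of $({\cal E}^{(1)},{\cal F}^{(1)})$ produces a weight $g$ as in (b) such that $\int |u|\,g\,{\rm d}m \le \sqrt{{\cal E}^{(1)}(u,u)}$ for all $u \in {\cal F}^{(1)}$. For any $u \in {\cal F}^{(2)} \subset {\cal F}^{(1)}$, combining this with the hypothesis gives
$$\int_X |u|\,g\,{\rm d}m \le \sqrt{{\cal E}^{(1)}(u,u)} \le \sqrt{C}\,\sqrt{{\cal E}^{(2)}(u,u)},$$
so the function $\tilde g := g/\sqrt{C}$ is a strictly positive, bounded, integrable weight verifying criterion (b) for $({\cal E}^{(2)},{\cal F}^{(2)})$. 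Hence $({\cal E}^{(2)},{\cal F}^{(2)})$ is transient.

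The argument is essentially a direct transfer once characterizations (a) and (b) are in place; there is no serious obstacle, only a choice of formulation. The one point worth emphasizing is why I avoid reasoning directly with the integrals $Gf$ that appear in the definitions of recurrence and transience: a form domination ${\cal E}^{(1)} \le C\,{\cal E}^{(2)}$ does not produce any pointwise comparison of the associated semigroups or Green kernels, so a direct definition-based comparison is not available. The form-level criteria (a) and (b) are precisely the intermediaries that convert the quadratic-form inequality into the conclusion, which is why routing the proof through them is natural.
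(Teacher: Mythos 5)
Your proof is correct, and the paper offers no proof of its own here—it simply quotes \cite[Theorem 1.6.4]{FOT11}—so there is nothing to compare against inside the paper itself. Your argument is essentially the standard one from that reference: transfer the approximating-sequence criterion for recurrence and the weight-function criterion for transience across the inclusion ${\cal F}^{(2)}\subset{\cal F}^{(1)}$ using the domination ${\cal E}^{(1)}\le C\,{\cal E}^{(2)}$, and both directions go through without gaps.
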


A measurable set $A\subset X$ is called ($T_t$-){\it invariant} if 
for any $f\in L^2(X;m)$ and $t>0$, 
$$T_t({\bf 1}_Af)={\bf 1}_AT_tf, \quad \text{$m$-a.e.}$$
We say that $\{T_t\}_{t>0}$ (or $({\cal E},{\cal F})$) is {\it irreducible} 
if any invariant set $A\subset X$ satisfies $m(A)=0$ or $m(X\setminus A)=0$.
We then know

\begin{lem}\label{lem:irreducible} {\rm (\cite[Lemma 1.6.4 (iii)]{FOT11})}
If $({\cal E},{\cal F})$ is irreducible, then it is recurrent or transient. 
\end{lem}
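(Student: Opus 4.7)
The plan is to produce a $T_t$-invariant set $A$ whose two alternatives under the irreducibility hypothesis correspond precisely to transience and recurrence of $({\cal E},{\cal F})$. Using $\sigma$-finiteness of $m$, I first fix a reference function $f \in L^1_+(X;m) \cap L^2(X;m)$ with $f > 0$ $m$-a.e., set $u := Gf \in [0,\infty]$, and define $A := \{x \in X : u(x) < \infty\}$; the intuition is that $A$ is the maximal ``transient part'' of $X$.

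The heart of the argument is the $T_t$-invariance of $A$. Starting from the semigroup relation $S_N f = S_t f + T_t S_{N-t} f$ for $N > t > 0$ and passing $N \to \infty$ by monotone convergence (valid because $T_t$ is positivity-preserving), one obtains the $m$-a.e.\ identity
\[
u = S_t f + T_t u, \qquad t > 0.
\]
Because $S_t f$ is $m$-a.e.\ finite, this forces $A = \{T_t u < \infty\}$, and $T_t u = +\infty$ on $A^c$. To upgrade this pointwise dichotomy to genuine invariance of $A$, I would truncate $u_n := u \wedge n \in L^\infty$ and invoke the $L^2$-symmetry of $T_t$: for $\phi \in L^2_+$, the pairing $(T_t\phi,\mathbf{1}_A u_n) = (\phi, T_t(\mathbf{1}_A u_n))$ stays uniformly bounded in $n$, so if any $L^2$-flux of $\phi$ passed from $A^c$ into $A$ under $T_t$ then the right-hand side would blow up on a part of $A^c$ where $u = \infty$; this forces $T_t(\mathbf{1}_{A^c}\phi) = 0$ on $A$, and symmetry yields $T_t(\mathbf{1}_A\phi) = \mathbf{1}_A T_t \phi$ $m$-a.e.

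Once $A$ is known to be invariant, the irreducibility hypothesis gives $m(A) = 0$ or $m(A^c) = 0$. If $m(A^c) = 0$, then $f > 0$ and $Gf < \infty$ both hold $m$-a.e.; for any $g \in L^1_+(X;m)$, dominating $g$ by a multiple of $f$ on the exhausting sets $\{f \geq 1/n,\, g \leq n\}$ and applying monotone convergence gives $Gg < \infty$ $m$-a.e., which is transience. If $m(A) = 0$, then $Gf = \infty$ $m$-a.e.; for any $g \in L^1_+$, applying the invariance step with $f$ replaced by $g$ shows $\{Gg < \infty\}$ is also invariant, hence by irreducibility of measure $0$ or full. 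The full-measure sub-case with $g \not\equiv 0$ is ruled out as follows: $h := T_1 g \in L^1_+$ is then strictly positive $m$-a.e.\ (by the standard positive-improving property of irreducible symmetric Markovian semigroups) and satisfies $Gh = Gg - S_1 g < \infty$ $m$-a.e., so the previous transience argument applied to $h$ would force $Gf < \infty$ a.e., contradicting $Gf = \infty$ a.e. Hence $Gg \in \{0, \infty\}$ $m$-a.e., giving recurrence.

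The main obstacle is the invariance step. Since $u$ can take the value $+\infty$ on a set of positive measure, all pairings must be handled via monotone truncation, and the symmetry of $T_t$ must be exploited in precisely the right way to turn the pointwise identity into a genuine decoupling of $T_t$ across $A$ and $A^c$. A secondary subtlety is in the recurrent case, where ruling out the intermediate possibility $0 < Gg < \infty$ on a set of positive measure relies on the positive-improving property of $T_t$ in the irreducible symmetric setting.
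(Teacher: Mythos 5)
The paper does not give its own proof of this lemma; it is cited from \cite[Lemma 1.6.4 (iii)]{FOT11}, whose proof rests on the reference-function theory of \cite[Section 1.5]{FOT11}. Your overall architecture --- fix a strictly positive $f\in L^1_+\cap L^2$, show $A=\{Gf<\infty\}$ is $T_t$-invariant via $u=S_tf+T_tu$, and apply irreducibility --- is the right skeleton, and the invariance step can indeed be made rigorous along the lines you sketch (one clean way: $\mathbf{1}_{A^c}(u\wedge n)=n\,\mathbf{1}_{A^c}$, so $n\,T_t\mathbf{1}_{A^c}=T_t(\mathbf{1}_{A^c}(u\wedge n))\le T_tu<\infty$ on $A$, forcing $T_t\mathbf{1}_{A^c}=0$ a.e.\ on $A$, and symmetry does the rest).

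However, the transience branch has a genuine gap. Having deduced $Gf<\infty$ a.e.\ for one strictly positive $f$, you set $g_n=g\,\mathbf{1}_{\{f\ge 1/n,\,g\le n\}}\le n^2f$, obtain $Gg_n\le n^2Gf<\infty$ a.e., and then invoke monotone convergence to conclude $Gg<\infty$ a.e. This last inference is false as stated: an increasing sequence of a.e.-finite nonnegative functions can converge to $+\infty$ everywhere, and the bounds $n^2Gf$ are not uniform in $n$. Passing from ``$Gf<\infty$ a.e.\ for a single strictly positive $f\in L^1_+$'' to ``$Gg<\infty$ a.e.\ for every $g\in L^1_+$'' is precisely the nontrivial content of \cite[Theorem 1.5.1]{FOT11} (existence of a reference function characterizes transience), and it cannot be shortcut by pointwise domination plus monotone convergence. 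The recurrence branch inherits this gap, since you feed the same argument to $h=T_1g$; moreover, the ``positive-improving property of irreducible symmetric Markovian semigroups'' that you invoke there is itself a nontrivial fact requiring proof and is not part of what the paper or \cite{FOT11} take as given at this stage, so using it without justification risks circularity. To repair the proof you would need to either develop the reference-function machinery of \cite[Section 1.5]{FOT11} or replace the monotone-convergence step by a duality argument $(Gg,\varphi)=(g,G\varphi)$ against a carefully chosen $\varphi>0$ a.e.\ with $G\varphi$ bounded, whose existence is again exactly what \cite[Theorem 1.5.1]{FOT11} supplies.
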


In what follows, let $X$ be a locally compact separable metric space 
and $m$ a positive Radon measure on $X$ with full support. 
Let $X_{\Delta}=X\cup \{\Delta\}$ be a one-point compactification of $X$.  
We write ${\cal O}$ for the totality of open sets in $X$.

Let ${\cal E}_1(u,u)={\cal E}(u,u)+\|u\|_{L^2(X;m)}^2$ for $u\in {\cal F}$. 
For $A\in {\cal O}$, let  
$${\cal L}_A=\left\{u\in {\cal F} \mid u\geq 1 \ \text{$m$-a.e.\ on $A$}\right\}$$
and 
$$
\Capa(A)=
\begin{cases}
\inf\{{\cal E}_1(u,u) \mid u\in {\cal L}_A\} & \text{if ${\cal L}_A\ne \emptyset$,}\\
\infty & \text{if ${\cal L}_A=\emptyset$.}
\end{cases}$$
We define the {\it capacity} of a set $B\subset X$ by 
$$\Capa(B)=\inf_{A\in {\cal O}, A\supset B}\Capa(A).$$
Note that if $B$ is measurable with $m(B)>0$, then $\Capa(B)>0$ by definition. 

For $A\subset X$, a statement depending on $x\in A$ is said 
to hold quasi everywhere (q.e.\ for short) on $A$ 
if there exists $N\subset X$ of zero capacity 
such that the statement holds for every $x\in A\setminus N$.

\subsection{Probabilistic part}
Let ${\mathbf M}=(\Omega,{\cal F}, \{X_t\}_{t\geq 0},\{P_x\}_{x\in X},\{\theta_t\}_{t\geq 0}, \zeta)$ 
be a $m$-symmetric Borel right process on $X$. 
Here $\zeta=\inf\left\{t>0 \mid X_t\in \Delta\right\}$ is the lifetime of ${\mathbf M}$ 
and $\theta_t:\Omega\to \Omega$ is the shift of paths defined 
by the relation $X_s\circ\theta_t=X_{s+t}$ for every $s\geq 0$.

We write ${\cal B}(X)$ for the set of all Borel measurable subsets of $X$ 
and 
$${\cal B}(X_{\Delta})={\cal B}(X)\cup \left\{B\cup\{\Delta\} \mid B\in {\cal B}(X)\right\}.$$
A set $B\subset X$ is called {\it nearly Borel measurable}  
if there exist $B_1,B_2\in {\cal B}(X_{\Delta})$ such that  
$B_1\subset B\subset B_2$ and for any probability measure $\mu$ on $X_{\Delta}$, 
$$P_x(\text{$X_t\in B_2\setminus B_1$ for some $t\geq 0$})=0, \quad \text{$\mu$-a.e.\ $x\in X$}.$$
For a nearly Borel set $A$, 
let $\sigma_A=\inf\{t>0 \mid X_t\in A\}$ denote the hitting time of ${\mathbf M}$ to $A$. 
A set $N\subset X$ is called {\it exceptional} if there exists a nearly Borel set $\tilde{N}$ 
such that $\tilde{N}\supset N$ and $P_x(\sigma_{\tilde{N}}<\infty)=0$ for $m$-a.e.\ $x\in X$.  
We say that a set $N\subset X$ is {\it properly exceptional} 
if it is nearly Borel such that $m(N)=0$ and 
$X\setminus N$ is ${\mathbf M}$-invariant, that is, 
$$P_x(\text{$X_t\in (X\setminus N)_{\Delta}$ and $X_{t-}\in (X\setminus N)_{\Delta}$ for any $t>0$})=1, 
\quad x\in X\setminus N.$$ 
Here $(X\setminus N)_{\Delta}=(X\setminus N)\cup \{\Delta\}$ and $X_{t-}=\lim_{s\rightarrow t-0}X_s$.
Note that any properly exceptional set is exceptional by definition.

Let $\{p_t\}_{t>0}$ be an $m$-symmetric Markovian transition function of ${\mathbf M}$ 
on $(X,{\cal B}(X))$. 
Then it uniquely associates an strongly continuous Markovian semigroup $\{T_t\}_{t>0}$ 
and thus a Dirichlet form $({\cal E},{\cal F})$ on $L^2(X;m)$ (\cite[\S 4.2]{FOT11}). 
The next theorem shows a probabilistic consequence of the recurrence of irreducible Dirichlet forms. 
\begin{thm}\label{thm:irr-rec}{\rm (\cite[Theorem 4.7.1 (iii)]{FOT11})} 
If $({\cal E},{\cal F})$ is irreducible recurrent, 
then for any nearly Borel non-exceptional set $B$, 
$$P_x(\sigma_B\circ \theta_n<\infty \ \text{for any $n\geq 0$})=1, \quad \text{q.e.\ $x\in X$}.$$ 
\end{thm}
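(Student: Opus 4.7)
The plan is to translate the analytic dichotomy underlying recurrence—that $Gf$ is either $0$ $m$-a.e.\ or $\infty$ $m$-a.e.—into a sample-path statement via the probabilistic representation
\[
Gf(x) = E_x\!\left[ \int_0^\zeta f(X_s)\,ds \right], \qquad f \in L_+^1(X;m),
\]
valid for q.e.\ $x$. This identity follows from the q.e.\ pointwise representation $T_tf(x) = E_x[f(X_t);\, t<\zeta]$ combined with monotone passage to the limit in $S_Nf$ as $N \to \infty$.

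The key preliminary reduction is to show $\sigma_B < \infty$ $P_x$-a.s.\ for q.e.\ $x$. When $m(B) > 0$ one picks $f \in L_+^1(X;m)$ with $\{f > 0\} \subseteq B$ and $\int f\,dm > 0$; the recurrence dichotomy forces $Gf = \infty$ q.e., since $Gf = 0$ $m$-a.e.\ would, by $m$-symmetry, yield $f=0$ $m$-a.e. A standard zero-one argument for the positive continuous additive functional $\int_0^t f(X_s)\,ds$ then upgrades the infinite expectation to almost-sure infiniteness, forcing infinitely many visits to $B$. The case $m(B) = 0$ is handled by working instead with the hitting-probability function $h_B(x) = P_x(\sigma_B < \infty)$: non-exceptionality of $B$ forces $\{h_B > 0\}$ to be non-$m$-negligible, irreducibility promotes it to full $m$-measure (otherwise $\{h_B = 0\}$ would be a nontrivial invariant set), and recurrence upgrades positive hitting probability to probability one.

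Given $\sigma_B < \infty$ $P_x$-a.s.\ q.e., the shift iteration is routine. Pick a properly exceptional set $N$ outside of which this conclusion holds. Since $X \setminus N$ is ${\mathbf M}$-invariant, $X_n \in X_\Delta \setminus N$ $P_x$-a.s.\ for every $n \geq 0$ and every $x \in X \setminus N$. Applying the strong Markov property at time $n$ and the q.e.\ conclusion to the state $X_n$ gives $P_x(\sigma_B \circ \theta_n < \infty) = 1$ for each $n$; a countable intersection over $n \in \mathbb{N}$ yields the full statement.

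The main obstacle is the careful bookkeeping of exceptional sets: the q.e.\ identities must hold simultaneously along the trajectory at every integer time, which is precisely what the passage from a merely exceptional to a properly exceptional set with ${\mathbf M}$-invariant complement enables. A secondary subtlety is the case $m(B) = 0$, where $B$ is not detected by any $L_+^1$ test function and one must reason with the hitting-probability function and the structural consequences of irreducibility instead.
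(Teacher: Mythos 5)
This theorem is stated in the paper as a direct quotation of \cite[Theorem 4.7.1(iii)]{FOT11} without proof, so there is no in-paper argument to compare yours against; I evaluate your sketch on its own merits. The overall shape --- establish $P_x(\sigma_B<\infty)=1$ for q.e.\ $x$, then iterate via the strong Markov property over a properly exceptional set --- is right, and the $m(B)>0$ branch via an $L^1_+$ test function and the $Gf\in\{0,\infty\}$ dichotomy is sound.

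The gap is in the $m(B)=0$ branch, at exactly the step you label a "secondary subtlety." You reduce to showing $h_B:=P_\cdot(\sigma_B<\infty)=1$ q.e.\ and argue: (a) non-exceptionality gives $m(\{h_B>0\})>0$; (b) irreducibility gives $h_B>0$ $m$-a.e.; (c) "recurrence upgrades positive hitting probability to probability one." Step (c) is the crux of the theorem and you give no argument for it; it cannot come from the $Gf$ dichotomy, because $\mathbf{1}_B\notin L^1_+(X;m)$ when $m(B)=0$, so that dichotomy literally never sees $B$. One needs a genuine piece of recurrent potential theory here --- for instance that for an irreducible recurrent form every bounded excessive function is q.e.\ constant (applied to $h_B$), or, closer to what actually underlies FOT's part (iii), that a non-exceptional nearly Borel $B$ carries a nontrivial smooth measure and hence a nonzero positive continuous additive functional $A$ with fine support in $\overline{B}$; the $0$--$1$ law for such functionals under irreducible recurrence (part (i) of the same theorem) then gives $A_\infty=\infty$ a.s.\ q.e., which yields infinitely many returns to $B$ directly and handles $m(B)>0$ and $m(B)=0$ in one stroke, making your case split unnecessary. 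Two smaller points: in (b), the claim that $\{h_B=0\}$ is invariant should be justified (it is absorbing because $h_B$ is excessive, and $m$-symmetry then forces $\{h_B>0\}$ to be absorbing as well); and the shift iteration tacitly needs $X_n\ne\Delta$ a.s., i.e.\ conservativeness, which is \cite[Theorem 4.7.1(ii)]{FOT11} and is not free from the hypotheses --- without it $E_x[h_B(X_n)]=P_x(\zeta>n)$ could be $<1$.
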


Lemma \ref{lem:irreducible} and Theorem \ref{thm:irr-rec} imply that  
an irreducible Dirichlet form $({\cal E},{\cal F})$ is transient if there exists a nearly Borel 
non-exceptional set $B$ such that for any exceptional set $N\subset X$,
$P_x(\sigma_B<\infty)<1$ for some $x\in X\setminus N$.

Let $C_0(X)$ denote the totality of continuous functions on $X$ with compact support. 
A Dirichlet form $({\cal E},{\cal F})$ on $L^2(X;m)$ is called {\it regular}  
if  ${\cal F}\cap C_0(X)$ is dense both in ${\cal F}$ 
with respect to the norm 
\begin{equation}\label{eq:norm}
\|u\|_{{\cal E}_1}:=\sqrt{{\cal E}_1(u,u)}, \quad u\in {\cal F},
\end{equation}
and in $C_0(X)$ with respect to the uniform norm. 

Suppose that the Dirichlet form $({\cal E},{\cal F})$ is regular. 
Then there exists an $m$-symmetric Hunt process ${\mathbf M}$ on $X$ 
whose Dirichlet form is $({\cal E},{\cal F})$ (\cite[Theorem 7.2.1]{FOT11}).  
We also know that a set $N\subset X$ is exceptional 
if and only if $\Capa(N)=0$ (\cite[Theorem 4.2.1]{FOT11}).  

\section{Transience}\label{sect:tran}
In this section, we provide a sufficient condition 
for transience of a non-local Dirichlet form on $L^2({\mathbb R}^d)$ 
with unbounded coefficients. 
To do so, we separate the small and big jump parts of this Dirichlet form.

\subsection{Setting}\label{sub:set}
Let $J(x,{\rm d}y)$ be a nonnegative kernel on 
$({\mathbb R}^d,{\cal B}({\mathbb R}^d))$.
\begin{assum}\label{assum:regular}
The measure $J({\rm d}x\, {\rm d}y)=J(x,{\rm d}y)\,{\rm d}x$ is symmetric on ${\mathbb R}^d\times {\mathbb R}^d$ and 
the function 
$$x\mapsto \int_{{\mathbb R}^d}(1\wedge |x-y|^2)J(x,{\rm d}y)$$
belong to $L_{{\rm loc}}^1({\mathbb R}^d)$. 
\end{assum}

Let $({\cal E},{\cal D}({\cal E}))$ be a quadratic form on $L^2({\mathbb R}^d)$ defined by 
\begin{equation}\label{eq:form}
\begin{split}
{\cal D}({\cal E})
&=
\left\{u\in L^2({\mathbb R}^d) :
\iint_{{\mathbb R}^d\times{\mathbb R}^d}(u(x)-u(y))^2\,J(x, {\rm d}y){\rm d}x
<\infty\right\}, \\
{\cal E}(u,v)
&=\iint_{{\mathbb R}^d\times{\mathbb R}^d}(u(x)-u(y))(v(x)-v(y))\,J(x, {\rm d}y){\rm d}x,
\quad u,v\in {\cal D}({\cal E}).
\end{split}
\end{equation}
Let $\|\cdot\|_{{\cal E}_1}$ be the norm on ${\cal D}({\cal E})$ defined similarly to \eqref{eq:norm},  
and let $C_0^{\infty}({\mathbb R}^d)$ be the set of smooth functions on ${\mathbb R}^d$ 
with compact support. 
Then under Assumption \ref{assum:regular}, 
$C_0^{\infty}({\mathbb R}^d)\subset {\cal D}({\cal E})$ and 
$({\cal E},C_0^{\infty}({\mathbb R}^d))$ is closable with respect to the norm $\|\cdot \|_{{\cal E}_1}$.  
Hence if $({\cal E},{\cal F})$ denotes 
the $\|\cdot \|_{{\cal E}_1}$-closure of $({\cal E},C_0^{\infty}({\mathbb R}^d))$, 
then it is a regular Dirichlet form on $L^2({\mathbb R}^d)$.

Let
$$J^{(1)}({\rm d}x\,{\rm d}y)={\bf 1}_{\{|x-y|<1\}}\,J({\rm d}x\,{\rm d}y), \quad 
J^{(2)}({\rm d}x\,{\rm d}y)={\bf 1}_{\{|x-y|\geq 1\}}\,J({\rm d}x\,{\rm d}y).$$
For $i=1,2$, let $({\cal E}^{(i)},{\cal D}({\cal E}^{(i)}))$ be a quadratic form on $L^2({\mathbb R}^d)$ 
defined by 
\begin{equation*}
\begin{split}
{\cal D}({\cal E}^{(i)})&=
\left\{u\in L^2({\mathbb R}^d) :
\iint_{{\mathbb R}^d\times{\mathbb R}^d}(u(x)-u(y))^2\,J^{(i)}({\rm d}x\,{\rm d}y)
<\infty\right\}, \\
{\cal E}^{(i)}(u,v)
&=\iint_{{\mathbb R}^d\times{\mathbb R}^d}(u(x)-u(y))(v(x)-v(y))\,J^{(i)}({\rm d}x\,{\rm d}y),
\quad u,v\in {\cal D}({\cal E}^{(i)}).
\end{split}
\end{equation*}
Let $\|\cdot \|_{{\cal E}_1^{(i)}}$ be the norm on ${\cal D}({\cal E}^{(i)})$ 
defined similarly to \eqref{eq:norm}. 
If $({\cal E}^{(i)},{\cal F}^{(i)})$ is the $\|\cdot \|_{{\cal E}_1^{(i)}}$-closure of 
$({\cal E}^{(i)}, C_0^{\infty}({\mathbb R}^d))$,
then it is a regular Dirichlet form on $L^2({\mathbb R}^d)$. 
By definition, we have for $i=1,2$,  
\begin{equation}\label{eq:comp}
{\cal F}\subset {\cal F}^{(i)} \quad \text{and} \quad {\cal E}^{(i)}(u,u)\leq {\cal E}(u,u) \quad \text{for any $u\in {\cal F}$.}  
\end{equation}

Let $a_i(x) \ (i=1,2)$ be positive Borel measurable functions on ${\mathbb R}^d$ 
and 
\begin{equation*}
c(x,y)=(a_1(x)+a_1(y)){\bf 1}_{\{|x-y|<1\}}
+(a_2(x)+a_2(y)){\bf 1}_{\{|x-y|\geq 1\}}.
\end{equation*}
We further make the next assumption on the jumping measure $J({\rm d}x\,{\rm d}y)$.

\begin{assum}\label{assum:lower}
There exists a kernel $J_0(x,{\rm d}y)$ on ${\mathbb R}^d\times {\cal B}({\mathbb R}^d)$ 
satisfying the following{\rm :}
\begin{enumerate}
\item[{\rm (i)}] The measure $J_0({\rm d}x \, {\rm d}y)=J_0(x,{\rm d}y){\rm d}x$ is symmetric. 
Moreover, there exists a positive and rotationally invariant 
Borel measure $\nu({\rm d}z)$ on ${\mathbb R}^d$ such that 
$\int_{{\mathbb R}^d\setminus\{0\}}(1\wedge |z|^2)\,\nu({\rm d}z)<\infty$  and  
for any $x\in {\mathbb R}^d$ and $A\in {\cal B}({\mathbb R}^d)$,
\begin{equation}\label{eq:jump-lower}
J_0(x,x+A)\geq \nu(A).
\end{equation}
\item[{\rm (ii)}] Assumption {\rm \ref{assum:regular}} is fulfilled for 
the measure $J({\rm d}x \, {\rm d}y)=c(x,y)J_0(x,{\rm d}y){\rm d}x$ 
with the kernel $J(x,{\rm d}y)=c(x,y)J_0(x,{\rm d}y)$.  
\end{enumerate}
\end{assum}

By assumption, $\nu$ is the L\'evy measure of a rotationally invariant  L\'evy process on ${\mathbb R}^d$. 
The function $c(x,y)$ plays the role of the coefficient for $({\cal E},{\cal F})$. 
We note here that if $B\in {\cal B}(X)$ satisfies $J(B\times B^c)=J(B^c\times B)=0$, 
then $m(B)=0$ or $m(B^c)=0$ holds. 
Hence $({\cal E},{\cal F})$ is irreducible by \cite[Theorem 1.2]{O92}.

Let 
\begin{equation}\label{eq:coeff-a}
c_1(x,y)=(a_1(x)+a_1(y)){\bf 1}_{\{|x-y|<1\}}, \quad c_2(x,y)=(a_2(x)+a_2(y)){\bf 1}_{\{|x-y|\geq 1\}}.
\end{equation}
Then under Assumption \ref{assum:lower}, 
$$J^{(i)}({\rm d}x \, {\rm d}y)=c_i(x,y)J_0(x,{\rm d}y)\,{\rm d}x, \quad i=1,2.$$
Since $({\cal E},{\cal F})$ is irreducible, 
we see by \cite[Example 4.1 and Theorems 1.2  and 2.1 (ii)]{O92} that 
$({\cal E}^{(i)},{\cal F}^{(i)})$ is also irreducible for each $i=1,2$ 
so that it is recurrent or transient by Lemma \ref{lem:irreducible}. 
Note that by Theorem \ref{thm:comparison}, 
$({\cal E},{\cal F})$ is transient if so is either of $({\cal E}^{(i)},{\cal F}^{(i)})$.

\subsection{Small jump part}
In this subsection, we prove 
\begin{thm}\label{thm:tran-sm} 
If there exist $c>0$ and  $p>(2-d)/2$ such that 
\begin{equation*}
a_1(x)\geq c(1+|x|^2)^p \quad \text{for any $x\in {\mathbb R}^d$,}
\end{equation*}
then $({\cal E}^{(1)},{\cal F}^{(1)})$ is transient.
\end{thm}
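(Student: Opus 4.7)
The approach is the Lyapunov (test-function) method: I plan to exhibit a bounded positive non-constant function $\phi$ on $\mathbb{R}^d$, vanishing at infinity, that is excessive for $({\cal E}^{(1)}, {\cal F}^{(1)})$. Since $({\cal E}^{(1)}, {\cal F}^{(1)})$ is irreducible (as observed in Subsection~\ref{sub:set}), Lemma~\ref{lem:irreducible} gives the recurrence/transience dichotomy. In the recurrent case, Theorem~\ref{thm:irr-rec} forces any bounded non-negative excessive function to be essentially constant, so exhibiting a non-trivial $\phi$ as above rules out recurrence and yields transience.

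A first reduction: by the comparison principle (Theorem~\ref{thm:comparison}), it suffices to prove transience for the form ${\cal E}^{\ast}$ built from the lower bound $a_1^{\ast}(x) = c(1+|x|^2)^p$ in place of $a_1(x)$. Indeed the coefficient $c_1^{\ast}(x,y)\leq c_1(x,y)$ pointwise, hence ${\cal E}^{\ast}(u,u)\leq {\cal E}^{(1)}(u,u)$ on ${\cal F}^{(1)} \subset {\cal F}^{\ast}$, and part~(2) of Theorem~\ref{thm:comparison} transfers transience from ${\cal E}^{\ast}$ back to ${\cal E}^{(1)}$. One can further replace $J_0(x,dy)$ by its rotationally invariant lower bound $\nu(dy-x)$ supplied by Assumption~\ref{assum:lower}, reducing once more to a form whose jumping intensity is translation invariant and whose formal generator is tractable.

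The candidate is $\phi(x) = (1+|x|^2)^{-\alpha}$ with $\alpha > 0$ to be chosen. Writing the formal generator of the doubly reduced form as
\begin{equation*}
L\phi(x)= 2\int_{|z|<1}\bigl(\phi(x+z)-\phi(x)\bigr)\bigl(a_1^{\ast}(x)+a_1^{\ast}(x+z)\bigr)\nu(dz),
\end{equation*}
the rotational invariance of $\nu$ annihilates the odd-order Taylor terms in $z$, and the surviving second-order contributions collapse, upon expanding both $\phi$ and $a_1^{\ast}$, into $c_\nu\,\nabla\cdot(a_1^{\ast}\nabla\phi)(x)$, where $c_\nu=\tfrac{1}{d}\int_{|z|<1}|z|^2\nu(dz)>0$. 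A direct differentiation gives
\begin{equation*}
\nabla\cdot(a_1^{\ast}\nabla\phi)(x) = -2c\alpha\,(1+|x|^2)^{p-\alpha-2}\bigl[d+(d+2p-2\alpha-2)|x|^2\bigr],
\end{equation*}
which is strictly negative at infinity exactly when $\alpha < p+(d-2)/2$. The hypothesis $p > (2-d)/2$ guarantees that the interval $\bigl(0,\,p+(d-2)/2\bigr)$ is non-empty; any $\alpha$ therein gives $L\phi\leq 0$ outside a sufficiently large ball. A compactly-supported modification near the origin then promotes $\phi$ to a globally bounded non-constant excessive function for ${\cal E}^{\ast}$, delivering its transience and, by the reduction, that of ${\cal E}^{(1)}$.

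The main obstacle is converting the formal Taylor expansion into rigorous pointwise bounds with remainders uniform in $x$. Since $\phi$ and $a_1^{\ast}$ are explicit rational-exponent functions, their higher derivatives have closed-form growth/decay; the remainders are controlled by $\int_{|z|<1}|z|^2\nu(dz)<\infty$ (Assumption~\ref{assum:lower}) and contribute at a lower order in $(1+|x|^2)$ than the leading divergence term, so they are absorbed for $|x|$ large. A secondary technical matter is verifying that the two reductions preserve enough regularity and irreducibility for the excessive-function criterion to apply throughout; this requires some bookkeeping but is standard.
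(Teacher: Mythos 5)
Your approach is essentially the paper's: reduce by Theorem~\ref{thm:comparison} to $a_1(x)=c(1+|x|^2)^p$ and a rotationally invariant kernel, take the test function $\psi_\delta(x)=(1+|x|^2)^{-\delta}$ with $\delta\in(0,\,p+(d-2)/2)$, and show ${\cal L}^{(1)}\psi_\delta\le 0$ outside a large ball. Your formal Taylor computation is right and your threshold on $\delta$ coincides exactly with the paper's $\delta<p-(2-d)/2$; the paper proves the same pointwise inequality in Proposition~\ref{prop:liyap}, with the remainder estimates you defer done explicitly via the bounds \eqref{eq:est-0} and \eqref{eq:int-comp}.

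Two points deserve flagging. First, the concluding step where you ``promote $\phi$ to a globally bounded non-constant excessive function'' via ``a compactly-supported modification near the origin'' is imprecise: for a non-local generator an arbitrary modification on a compact set does not preserve the supersolution inequality outside that set. What does work is the truncation $\tilde\phi=\phi\wedge c$ with $c<\inf_{\overline{B(R)}}\phi$: one then checks $L\tilde\phi\le L\phi\le 0$ on $\{|x|\ge R\}\cap\{\phi\le c\}$ (since $\tilde\phi\le\phi$ off the diagonal) and $L\tilde\phi\le 0$ on $\{\phi> c\}$ (since $\tilde\phi$ attains its maximum there), but this mechanism needs to be named, not merely the existence of ``a modification.'' The paper sidesteps this altogether by running optional stopping directly on the martingale $M_t^{[\psi_\delta]}$ to get $P_x(\sigma_r<\infty)<1$ for $|x|$ large, which is equivalent and somewhat cleaner. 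Second, you identify the remainder control as ``the main obstacle'' and leave it; this is where the paper invests its real effort (Proposition~\ref{prop:liyap}, and Lemma~\ref{lem:cv-ge} to justify applying the generator formula to $\psi_\delta$ via approximation from $C_0^{\infty}$), so that part of your argument is acknowledged but not supplied.
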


On account of Theorem \ref{thm:comparison},
we may assume that the equality holds in
\eqref{eq:jump-lower} and   
$a_1(x)=(1+|x|^2)^p$ with some $p\in ((2-d)/2,1]$. 
For the proof of Theorem \ref{thm:tran-sm}, 
we apply the Lyapunov method 
(see, e.g., \cite[Section 6]{P95} for diffusion processes 
with singular diffusion coefficients) 
by using the martingale characterization 
of the generator of $({\cal E}^{(1)},{\cal F}^{(1)})$.

In what follows, we assume that the equality holds in
\eqref{eq:jump-lower} and $a_1(x)=(1+|x|^2)^p$ for some $p\in {\mathbb R}$. 
We first apply a test function to the generator. 
Let $\langle \cdot,\cdot\rangle$ be the standard inner product on ${\mathbb R}^d$. 
Let ${\cal L}^{(1)}$ be a linear operator on $C_0^{\infty}({\mathbb R}^d)$ defined by 
\begin{equation}\label{eq:gene}
\begin{split}
{\cal L}^{(1)}u(x)
&=\int_{0<|z|<1}\left(u(x+z)-u(x)-\langle \nabla u(x), z\rangle \right)
(a_1(x)+a_1(x+z))\,\nu({\rm d}z)\\
&+\frac{1}{2}\int_{0<|z|<1}\langle\nabla u(x),z\rangle(a_1(x+z)-a_1(x-z))\,\nu({\rm d}z), 
\quad (u\in C_0^{\infty}({\mathbb R}^d)).
\end{split}
\end{equation}
Then by \cite[Theorem 2.2]{SU14}, we know that for any $u\in C_0^{\infty}({\mathbb R}^d)$ and $v\in {\cal F}$,
\begin{equation*}
{\cal E}^{(1)}(u,v)=(-{\cal L}^{(1)}u,v)_{L^2({\mathbb R}^d)}.
\end{equation*}
In particular, ${\cal L}^{(1)}u\in L^2({\mathbb R}^d)\cap {\cal B}_b({\mathbb R}^d)$. 
For any $u\in C_b^2({\mathbb R}^d)$, 
all the integrals in  \eqref{eq:gene} are convergent  
so that we can define ${\cal L}^{(1)}u$ by the expression \eqref{eq:gene}.

Here we take $\psi_{\delta}(x)=(1+|x|^2)^{-\delta} \ (\delta>0)$ as a test function. 
Note that  $\psi_{\delta}\in C_b^2({\mathbb R}^d)$.
\begin{prop}\label{prop:liyap}
If $p>(2-d)/2$, 
then for any $\delta\in (0, p-(2-d)/2)$, 
there exist positive constants $C$ and  $M$ such that 
\begin{equation*}
{\cal L}^{(1)}\psi_{\delta}(x)\leq -\frac{C\psi_{\delta}(x)}{(1+|x|^2)^{1-p}}, \quad |x|\geq M.
\end{equation*}
\end{prop}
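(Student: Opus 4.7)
The plan is to bound ${\cal L}^{(1)}\psi_\delta(x)$ by Taylor expanding both $\psi_\delta$ and $a_1$ inside the integrals of \eqref{eq:gene}, exploiting the rotational (hence central) symmetry of $\nu$ to kill odd-order monomials in $z$, and then extracting the principal contribution explicitly. Concretely, write
$$\psi_\delta(x+z)-\psi_\delta(x)-\langle \nabla\psi_\delta(x),z\rangle = \frac{1}{2}\langle z, D^2\psi_\delta(x) z\rangle + r_\psi(x,z),$$
$$a_1(x+z)-a_1(x-z) = 2\langle \nabla a_1(x),z\rangle + r_a(x,z),$$
with $|r_\psi(x,z)|\le C|z|^3 (1+|x|^2)^{-\delta-3/2}$ and $|r_a(x,z)|\le C|z|^3(1+|x|^2)^{p-3/2}$; these follow from the pointwise bounds $|D^k\psi_\delta(y)|\le C_k (1+|y|^2)^{-\delta-k/2}$ and $|D^k a_1(y)|\le C_k (1+|y|^2)^{p-k/2}$ on the segments in the ball $\{|y-x|\le 1\}$, valid for $|x|\ge 2$. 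An analogous first-order expansion $a_1(x+z)-a_1(x) = \langle \nabla a_1(x),z\rangle + O(|z|^2(1+|x|^2)^{p-1})$ will be used in the first integral.

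In the first integral of \eqref{eq:gene}, split $a_1(x)+a_1(x+z) = 2a_1(x) + (a_1(x+z)-a_1(x))$. Because $\nu$ is rotationally invariant, $\int_{|z|<1} z_iz_j\,\nu({\rm d}z) = \frac{c_1}{d}\delta_{ij}$ with $c_1 := \int_{|z|<1}|z|^2\,\nu({\rm d}z)$, and integrals of odd polynomials in $z$ vanish. Pairing $\frac{1}{2}\langle z, D^2\psi_\delta z\rangle$ with $2a_1(x)$ produces $\frac{c_1}{d}a_1(x)\Delta\psi_\delta(x)$, while the cross term $\frac{1}{2}\langle z,D^2\psi_\delta z\rangle \langle \nabla a_1,z\rangle$ is odd in $z$ and integrates to zero. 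In the second integral, its leading part $\langle \nabla\psi_\delta,z\rangle \langle\nabla a_1,z\rangle$ integrates to $\frac{c_1}{d}\langle\nabla\psi_\delta(x),\nabla a_1(x)\rangle$. Hence the principal part of ${\cal L}^{(1)}\psi_\delta(x)$ is $\frac{c_1}{d}\bigl(a_1\Delta\psi_\delta + \langle\nabla a_1,\nabla\psi_\delta\rangle\bigr)(x)$.

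A direct computation with $a_1(x)=(1+|x|^2)^p$ and $\psi_\delta(x)=(1+|x|^2)^{-\delta}$ yields
$$a_1(x)\Delta\psi_\delta(x) + \langle\nabla a_1(x),\nabla\psi_\delta(x)\rangle = -2\delta(1+|x|^2)^{p-\delta-2}\bigl[d + (2p + d - 2 - 2\delta)|x|^2\bigr].$$
The hypothesis $\delta < p-(2-d)/2$ is precisely $2p + d - 2 - 2\delta > 0$, so for $|x|$ large this principal part is bounded above by $-C_0(1+|x|^2)^{p-\delta-1}$ for some $C_0>0$. Since $(1+|x|^2)^{p-\delta-1} = \psi_\delta(x)/(1+|x|^2)^{1-p}$, this is the target upper bound, modulo error terms.

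It remains to control all remainder contributions. The dominant error, from $2a_1(x)\cdot r_\psi$, is of order $(1+|x|^2)^{p-\delta-3/2}\int_{|z|<1}|z|^3\,\nu({\rm d}z)$; every other cross-product (involving $r_a$, the quadratic remainder in $a_1$, or products of remainders) is a further power $(1+|x|^2)^{-1/2}$ smaller. Since $\int_{|z|<1}|z|^k\,\nu({\rm d}z) < \infty$ for all $k\ge 2$, these errors are $o((1+|x|^2)^{p-\delta-1})$ and can be absorbed by taking $|x|\ge M$ large enough. The main technical obstacle is the careful bookkeeping of the remainders, specifically verifying that each cross-product gains at least one factor of $(1+|x|^2)^{-1/2}$ relative to the principal contribution and does not accidentally match its order.
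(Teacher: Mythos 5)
Your argument is correct, and it realizes the same Lyapunov mechanism as the paper---Taylor expansion plus rotational invariance of $\nu$---via a different decomposition. The paper expands only $\psi_\delta$, through the integral form of the second-order Taylor remainder, keeps $a_1(x)+a_1(x+z)$ and $(1+|x+sz|^2)^{\delta+2}$ unexpanded, and bounds their ratio uniformly in $(z,s)$ by $(2\pm 2\varepsilon)\psi_\delta(x)(1+|x|^2)^{p-2}$ for $|x|$ large; the drift-type term ${\cal L}^{(b)}\psi_\delta$ is treated with a separate mean-value expansion of $a_1(x+z)-a_1(x-z)$. You instead Taylor expand both $\psi_\delta$ and $a_1$ at $x$, let oddness under $z\mapsto -z$ annihilate the cubic cross-terms, and isolate the principal symbol $\frac{c_*}{d}\bigl(a_1\Delta\psi_\delta+\langle\nabla a_1,\nabla\psi_\delta\rangle\bigr)$, which you compute in closed form as $-\frac{2\delta c_*}{d}(1+|x|^2)^{p-\delta-2}\bigl[d+(2p+d-2-2\delta)|x|^2\bigr]$. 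The hypothesis $\delta<p-(2-d)/2$ is exactly $2p+d-2-2\delta>0$, which agrees with the $\varepsilon\to 0$ limit of the paper's threshold inequality $-2(\delta+1)(1+2\varepsilon)+d(1-\varepsilon)-2p(-1\pm 2\varepsilon)>0$, and your leading constant matches the paper's $c_*/d$. Your remainder bookkeeping is also sound: after the cubic cancellation the dominant error $2a_1(x)r_\psi(x,z)$ integrates to $O\bigl((1+|x|^2)^{p-\delta-3/2}\bigr)$, a factor $(1+|x|^2)^{-1/2}$ below the principal term, and all further cross-products (quadratic remainder of $a_1$, $r_a$, or products of remainders) are smaller still, so everything is absorbed once $|x|\ge M$ for $M$ large. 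The conceptual advantage of your route is that it exposes the far-field behavior of the small-jump generator as the Ichihara-type diffusion operator $\frac{c_*}{d}(a_1\Delta+\nabla a_1\cdot\nabla)$, making the link between the non-local transience threshold and the local one explicit; the paper reaches the same numerical threshold without isolating that structure.
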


\begin{proof}
Let 
\begin{equation}\label{eq:test-s}
{\cal L}^{(a)}\psi_{\delta}(x)
=\int_{0<|z|<1}(\psi_{\delta}(x+z)-\psi_{\delta}(x)-\langle\nabla \psi_{\delta}(x),z\rangle)
(a_1(x)+a_1(x+z))\,\nu({\rm d}z).
\end{equation}
We estimate the right hand side above 
in a way similar to \cite[Proposition 2.2]{SW20+}. 
Applying the Taylor theorem to $\psi_{\delta}$, we have 
\begin{equation*}
\begin{split}
&{\cal L}^{(a)}\psi_{\delta}(x)\\
&=4\delta(\delta+1)\int_0^1 
\left(\int_{0<|z|<1}\frac{\langle x+sz,z\rangle^2}{(1+|x+sz|^2)^{\delta+2}}
(a_1(x)+a_1(x+z))\,\nu({\rm d}z)\right)(1-s)\,{\rm d}s\\
&-2\delta\int_0^1 \left(\int_{0<|z|<1}\frac{|z|^2}{(1+|x+sz|^2)^{\delta+1}}
(a_1(x)+a_1(x+z))\,\nu({\rm d}z)\right)(1-s)\,{\rm d}s\\
&={\rm (I)}-{\rm (II)}.
\end{split}
\end{equation*}
Since $a_1(x)=(1+|x|^2)^p$, 
there exists $M>0$ for any $\varepsilon>0$ such that 
for any $x,z\in {\mathbb R}^d$ with $|x|\geq M$, $|z|<1$ and $s\in (0,1)$, 
\begin{equation}\label{eq:est-0}
\frac{2(1-\varepsilon)\psi_{\delta}(x)}{(1+|x|^2)^{2-p}}\leq 
\frac{a_1(x)+a_1(x+z)}{(1+|x+sz|^2)^{\delta+2}}
\leq\frac{2(1+\varepsilon)\psi_{\delta}(x)}{(1+|x|^2)^{2-p}}.
\end{equation}

Let 
\begin{equation*}
c_*=\int_{0<|z|<1}|z|^2\,\nu({\rm d}z), \quad c_{**}=\int_{0<|z|<1}|z|^4\,\nu({\rm d}z). 
\end{equation*}
Since the measure $\nu$ is rotationally invariant by assumption, 
we have 
$$\int_{0<|z|<1}\langle x,z \rangle^2\,\nu({\rm d}z)
=\frac{c_*}{d}|x|^2, \quad \int_{0<|z|<1}\langle x,z \rangle|z|^2\,\nu({\rm d}z)=0$$
so that 
\begin{equation*}
\int_{0<|z|<1}\langle x+sz,z\rangle^2\,\nu({\rm d}z)
=\frac{c_*}{d}|x|^2+c_{**}s^2.
\end{equation*}
Hence by \eqref{eq:est-0}, 
we obtain  for any $x\in {\mathbb R}^d$ with $|x|\geq M$,
\begin{equation*}
\begin{split}
{\rm (I)}
&\leq 
4\delta(\delta+1)\frac{2(1+\varepsilon)\psi_{\delta}(x)}{(1+|x|^2)^{2-p}}
\int_0^1 
\left(\int_{0<|z|<1}\langle x+sz,z\rangle^2
\,\nu({\rm d}z)\right)(1-s)\,{\rm d}s\\
&=4\delta(\delta+1)\frac{(1+\varepsilon)\psi_{\delta}(x)}{(1+|x|^2)^{2-p}}
\left(\frac{c_*}{d}|x|^2+\frac{c_{**}}{6}\right)
\leq \frac{4c_*\delta(\delta+1)}{d}\frac{(1+2\varepsilon)\psi_{\delta}(x)}{(1+|x|^2)^{1-p}}.
\end{split}
\end{equation*}
In the same way, we have
\begin{equation*}
{\rm (II)}
\geq 
\frac{2c_*\delta(1-\varepsilon)\psi_{\delta}(x)}{(1+|x|^2)^{1-p}}
\end{equation*}
so that 
\begin{equation}\label{eq:bound-s}
{\cal L}^{(a)}\psi_{\delta}(x)
={\rm (I)}-{\rm (II)}
\leq \frac{2\delta \psi_{\delta}(x)}{(1+|x|^2)^{1-p}}
\frac{c_*}{d}
\left\{2(\delta+1)(1+2\varepsilon)-d(1-\varepsilon)\right\}.
\end{equation}

Let 
\begin{equation*}
\begin{split}
{\cal L}^{(b)}\psi_{\delta}(x)
&=\frac{1}{2}\int_{0<|z|<1}\langle\nabla \psi_{\delta}(x),z\rangle
(a_1(x+z)-a_1(x-z))\,\nu({\rm d}z)\\
&=-\frac{\delta}{(1+|x|^2)^{\delta+1}}
\int_{0<|z|<1}\langle x,z\rangle
(a_1(x+z)-a_1(x-z))\,\nu({\rm d}z).
\end{split}
\end{equation*}
Then by the Taylor theorem, 
there exists $\theta\in (0,1)$ for any $x,z\in {\mathbb R}^d$ such that 
\begin{equation*}
\begin{split}
a_1(x+z)-a_1(x-z)
&=(1+|x+z|^2)^p-(1+|x-z|^2)^p\\
&=4p(1+|x+2\theta z|^2)^{p-1}\langle x+2\theta z,z\rangle,
\end{split}
\end{equation*}
and therefore,
\begin{equation}\label{eq:d-taylor}
{\cal L}^{(b)}\psi_{\delta}(x)
=-\frac{4p\delta}{(1+|x|^2)^{\delta+1}}
\int_{0<|z|<1}\left(\langle x,z \rangle^2+2\theta \langle x,z \rangle |z|^2\right)
(1+|x+2\theta z|^2)^{p-1}\,\nu({\rm d}z).
\end{equation}
In a similar way to \eqref{eq:est-0}, 
there exists $M>0$ for any $\varepsilon>0$ such that for any $x\in {\mathbb R}^d$ with $|x|\geq M$, 
\begin{equation}\label{eq:int-comp}
\frac{(1-\varepsilon)c_*}{d}(1+|x|^2)^p
\leq \int_{0<|z|<1}\langle x,z \rangle^2(1+|x+2\theta z|^2)^{p-1}\,\nu({\rm d}z)
\leq \frac{(1+\varepsilon)c_*}{d}(1+|x|^2)^p.
\end{equation}
We also see that there exists $c>0$ such that for any $x\in {\mathbb R}^d$ with $|x|\geq M$, 
\begin{equation*}
\left|\int_{0<|z|<1}2\theta \langle x,z \rangle |z|^2(1+|x+2\theta z|^2)^{p-1}\,\nu({\rm d}z)\right|
\leq c(1+|x|^2)^{p-1/2}.
\end{equation*}
Combining this with \eqref{eq:int-comp}, we see from \eqref{eq:d-taylor} that 
\begin{equation}\label{eq:bound-d}
{\cal L}^{(b)}\psi_{\delta}(x)
\leq 4(-1\pm2\varepsilon) \frac{pc_*\delta}{d}\frac{\psi_{\delta}(x)}{(1+|x|^2)^{1-p}}, \quad |x|\geq M.
\end{equation}
Here the sign of $\pm$ above is the same with that of $p$.

By \eqref{eq:bound-s} and \eqref{eq:bound-d},  we have for any $x\in {\mathbb R}^d$ with $|x|\geq M$, 
\begin{equation}\label{eq:upper-1}
\begin{split}
{\cal L}^{(1)}\psi_{\delta}(x)
&={\cal L}^{(a)}\psi_{\delta}(x)+{\cal L}^{(b)}\psi_{\delta}(x)\\
&\leq -\frac{2\delta \psi_{\delta}(x)}{(1+|x|^2)^{1-p}}\frac{c_*}{d}
\left\{-2(\delta+1)(1+2\varepsilon)+d(1-\varepsilon)-2p(-1\pm 2\varepsilon)\right\}.
\end{split}
\end{equation}  
Then for any $\delta\in (0, p-(2-d)/2)$, there exists $\varepsilon_0>0$ such that 
for any $\varepsilon\in (0,\varepsilon_0)$,
\begin{equation*}
-2(\delta+1)(1+2\varepsilon)+d(1-\varepsilon)-2p(-1\pm 2\varepsilon)>0.
\end{equation*}  
Hence the proof is complete by \eqref{eq:upper-1}.
\end{proof}

We next present a lemma which will be used to justify the martingale characterization 
of the generator applied to functions in ${\cal A}$. 
Let $\gamma(x)=(4+|x|^2)^{1/2}$. 
For $u\in C_b^2({\mathbb R}^d)$, let $H(u)$ be the Hessian matrix of $u$. 
We say that a function $u\in C_b^2({\mathbb R}^d)$ belongs to the class ${\cal A}$ ($u\in {\cal A}$ in notation) 
if there exist $c_1>0$ and $c_2>0$ such that for any $z\in {\mathbb R}^d$,
\begin{equation*}
\sup_{x\in {\mathbb R}^d}\left(\gamma(x)|\langle\nabla u(x),z\rangle|\right)\leq c_1|z| 
\end{equation*}
and
\begin{equation*}
\sup_{x\in {\mathbb R}^d}\left(\gamma(x)^2|\langle H(u)(x)z,z\rangle|\right)\leq c_2|z|^2.
\end{equation*}
Note that for any $\delta>0$, 
the function $\psi_{\delta}$ belongs to the class ${\cal A}$.

\begin{lem}\label{lem:cv-ge}
There exists a sequence 
$\{\phi_n\}\subset C_0^{\infty}({\mathbb R}^d)$ 
such that if $p\leq 1$, then for any $f\in {\cal A}$,
\begin{equation}\label{eq:bdd-0}
\sup_{n\in {\mathbb N}}\|{\cal L}^{(1)}(f\phi_n)\|_{\infty}<\infty
\end{equation}
and 
\begin{equation}\label{eq:conv-pt-0}
{\cal L}^{(1)}(f\phi_n)(x)\rightarrow ({\cal L}^{(1)}f)(x) \ (n\rightarrow\infty) \quad \text{for any $x\in {\mathbb R}^d$}. 
\end{equation}
\end{lem}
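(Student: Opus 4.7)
The plan is to take a standard sequence of cutoffs $\phi_n(x)=\phi(x/n)$, where $\phi\in C_0^\infty({\mathbb R}^d)$ satisfies $0\le\phi\le 1$, $\phi\equiv 1$ on $\{|x|\le 1\}$ and $\supp\phi\subset\{|x|\le 2\}$. Then $f\phi_n\in C_0^\infty({\mathbb R}^d)$, so ${\cal L}^{(1)}(f\phi_n)$ is defined by \eqref{eq:gene}. The pointwise convergence \eqref{eq:conv-pt-0} is essentially free: the integrands in \eqref{eq:gene} at the point $x$ only involve the values of the test function on $\{x,x\pm z:|z|<1\}$; once $n\ge |x|+2$, we have $\phi_n\equiv 1$ on $B(x,2)$, so that $(f\phi_n)(y)=f(y)$ for $y\in B(x,1)$ and all relevant derivatives coincide with those of $f$ at $x$. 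Hence ${\cal L}^{(1)}(f\phi_n)(x)={\cal L}^{(1)}f(x)$ for all sufficiently large $n$.

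The substance of the lemma is the uniform bound \eqref{eq:bdd-0}. The key observation is that $f\phi_n$ satisfies the defining inequalities of the class ${\cal A}$ with constants depending on $f$ and on $\|\phi\|_{C^2}$ but \emph{not} on $n$. Indeed, $\nabla(f\phi_n)=\phi_n\nabla f+f\nabla\phi_n$, and on $\supp\nabla\phi_n\subset\{n\le |x|\le 2n\}$ we have $\gamma(x)\le 3n$, which precisely compensates for $|\nabla\phi_n|\le\|\nabla\phi\|_\infty/n$; the Hessian term is handled similarly by writing out $H(f\phi_n)=\phi_n H(f)+\nabla f(\nabla\phi_n)^T+\nabla\phi_n(\nabla f)^T+fH(\phi_n)$ and using $\gamma(x)^2\le 9n^2$ on $\supp H(\phi_n)$. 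This gives constants $\tilde c_1,\tilde c_2$ for $f\phi_n$ that are independent of $n$.

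With $f\phi_n$ in ${\cal A}$ uniformly, it suffices to bound ${\cal L}^{(1)}u$ pointwise for any $u\in{\cal A}$ by a constant depending only on $\tilde c_1,\tilde c_2$ and $\|u\|_\infty$. For the integrand of ${\cal L}^{(a)}$ (the first term in \eqref{eq:gene}), Taylor's theorem together with the Hessian bound in ${\cal A}$ yields
$$|u(x+z)-u(x)-\langle\nabla u(x),z\rangle|\le \frac{\tilde c_2|z|^2}{2}\sup_{s\in[0,1]}\frac{1}{\gamma(x+sz)^2}.$$
For $|z|<1$ and $|x|\ge 2$ we have $\gamma(x+sz)^2\gtrsim 1+|x|^2$, while $a_1(x)+a_1(x+z)\lesssim (1+|x|^2)^p$; since $p\le 1$, the factor $(1+|x|^2)^{p-1}$ is bounded, so the integrand is dominated by $C|z|^2$, which is $\nu$-integrable by Assumption \ref{assum:lower}. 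For bounded $|x|$ the same bound holds trivially. For the integrand of ${\cal L}^{(b)}$, combining the ${\cal A}$-bound $|\langle\nabla u(x),z\rangle|\le\tilde c_1|z|/\gamma(x)$ with the Taylor estimate $|a_1(x+z)-a_1(x-z)|\le C(1+|x|^2)^{p-1/2}|z|$ derived via \eqref{eq:d-taylor} produces again a bound of order $(1+|x|^2)^{p-1}|z|^2\le C|z|^2$ when $p\le 1$.

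The main obstacle is bookkeeping: the ${\cal A}$-type inequalities must genuinely be preserved by multiplication by $\phi_n$ with uniform constants, and the power $p\le 1$ must be used essentially, to absorb the growth of $a_1$ against the decay provided by $\gamma(x)^{-2}$ and $\gamma(x)^{-1}$. Once these two ingredients are in place, both assertions of the lemma follow at once.
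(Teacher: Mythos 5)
Your proposal is correct and follows essentially the same route as the paper: the same scaling cutoffs $\phi_n$, the observation that the $\gamma$-weighted gradient/Hessian bounds defining ${\cal A}$ are stable under multiplication by $\phi_n$ with $n$-uniform constants (the paper organizes this via the Leibniz-type identity \eqref{eq:product} and the uniform membership $\phi_n\in{\cal A}$, but the resulting estimates are identical), and the same use of $p\leq 1$ to absorb the growth of $a_1$ against the decay of $\gamma(x)^{-2}$ and $\gamma(x)^{-1}$. Your remark that \eqref{eq:conv-pt-0} is actually eventual equality (since ${\cal L}^{(1)}$ at $x$ only sees the ball $B(x,1)$, where $\phi_n\equiv 1$ once $n>|x|+1$) is a small simplification over the paper's appeal to dominated convergence, but not a different approach.
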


\begin{proof} 
Let $w\in C_0^{\infty}({\mathbb R})$ satisfy $0\leq w(t)\leq 1$ for any $t\in {\mathbb R}$ 
and 
\begin{equation*}
w(t)=\begin{cases}
1 & (|t|\leq 1),\\
0 & (|t|\geq 2).
\end{cases}
\end{equation*}
For $n\in {\mathbb N}$, let $\phi_n(x)=w(|x|/n) \ (x\in {\mathbb R}^d)$.
Then for any $f\in {\cal A}$ and $n\in {\mathbb N}$, 
we have $f\phi_n\in C_0^{\infty}({\mathbb R}^d)$ and 
\begin{equation}\label{eq:product}
\begin{split}
f\phi_n(x+z)-f\phi_n(x)-\langle\nabla(f\phi_n)(x),z\rangle
&=f(x+z)\left(\phi_n(x+z)-\phi_n(x)-\langle\nabla\phi_n(x),z\rangle\right)\\
&+\phi_n(x)\left(f(x+z)-f(x)-\langle\nabla f(x),z\rangle\right)\\
&+\langle\nabla \phi_n(x),z\rangle (f(x+z)-f(x)).
\end{split}
\end{equation}
In the same way to \cite[Proof of Theorem 2.3]{SU14}, 
there exist $c_1>0$ and $c_2>0$ such that 
for any $n\in {\mathbb N}$ and $z\in {\mathbb R}^d$,
\begin{equation*}
\sup_{x\in {\mathbb R}^d}\left(\gamma(x)|\langle\nabla \phi_n(x),z\rangle|\right)\leq c_1|z| 
\end{equation*}
and
\begin{equation*}
\sup_{x\in {\mathbb R}^d}\left(\gamma(x)^2|\langle H(\phi_n)(x)z,z\rangle|\right)\leq c_2|z|^2.
\end{equation*}
Hence $\phi_n\in {\cal A}$ for any $n\in {\mathbb N}$. 
Then by \eqref{eq:product} and the Taylor theorem, 
there exists $c_3>0$ such that for any $n\in {\mathbb N}$ and 
for any $x,z\in {\mathbb R}^d$ with  $0<|z|<\gamma(x)/2$, 
\begin{equation}\label{eq:deriv}
\begin{split}
&|f\phi_n(x+z)-f\phi_n(x)-\langle\nabla(f\phi_n)(x),z\rangle|\\
&\leq |f(x+z)\left(\phi_n(x+z)-\phi_n(x)-\langle\nabla\phi_n(x),z\rangle\right)|\\
&+|\phi_n(x)\left(f(x+z)-f(x)-\langle\nabla f(x),z\rangle\right)|\\
&+|\langle\nabla \phi_n(x),z\rangle (f(x+z)-f(x))|\\
&\leq c_3|z|^2/\gamma(x)^2.
\end{split}
\end{equation}
This in particular implies that for any $n\in {\mathbb N}$ and $x\in {\mathbb R}^d$,
\begin{equation*}
\begin{split}
&\int_{0<|z|<1}\left|f\phi_n(x+z)-f\phi_n(x)-\langle \nabla f\phi_n(x), z\rangle \right|
(a_1(x)+a_1(x+z))\,\nu({\rm d}z)\\
&\leq \frac{c_4a_1(x)}{\gamma(x)^2}\int_{0<|z|<1}
|z|^2\,\nu({\rm d}z)
\leq c_5(1+|x|^2)^{p-1}\leq c_6.
\end{split}
\end{equation*}

By the Taylor theorem again, 
there exists $c_6>0$ such that  
\begin{equation*}
|a_1(x+z)-a_1(x-z)|\leq c_6(1+|x|^2)^{p-1/2}|z|, \quad x\in {\mathbb R}^d, \ 0<|z|<1.
\end{equation*}
Since there exists $c_7>0$ such that for any  $n\in {\mathbb N}$, 
\begin{equation}\label{eq:grad}
|\langle\nabla(f\phi_n)(x),z\rangle|\leq c_7|z|/\gamma(x), \quad 
x,z\in {\mathbb R}^d,
\end{equation}
we have 
\begin{equation*}
\begin{split}
\int_{0<|z|<1}\left|\langle\nabla (f\phi_n)(x),z\rangle(a_1(x+z)-a_1(x-z))\right|\,\nu({\rm d}z)
&\leq c_8(1+|x|^2)^{p-1}\leq c_9.
\end{split}
\end{equation*}
so that \eqref{eq:bdd-0} holds. 
Then \eqref{eq:conv-pt-0} follows 
by the dominated convergence theorem. 
\end{proof}

\begin{rem}\label{rem:decay}\rm 
Let $p\leq 1$. 
Then according to the calculation in the proof of Lemma \ref{lem:cv-ge}, 
there exists $c>0$ for any $f\in {\cal A}$ such that  
$|({\cal L}^{(1)}f)(x)|\leq c(1+|x|^2)^{p-1}$ for any $x\in{\mathbb R}^d$. 
\end{rem}

\begin{proof}[Proof of Theorem {\rm \ref{thm:tran-sm}}]
Assume that the equality holds in
\eqref{eq:jump-lower} and   
$a_1(x)=(1+|x|^2)^p$ with some $p\in ((2-d)/2,1]$. 
Let ${\mathbf M}=(\{X_t\}_{t\geq 0},\{P_x\}_{x\in {\mathbb R}^d})$ 
be a symmetric Hunt process on ${\mathbb R}^d$ 
generated by $({\cal E}^{(1)},{\cal F}^{(1)})$. 
Then ${\mathbf M}$ is conservative by \cite[Theorem 2.3]{SU14}.

According to the proof of \cite[Theorem 4.3]{FU12}, 
there exists a properly exceptional set $N\subset {\mathbb R}^d$ such that 
for any $u\in C_0^{\infty}({\mathbb R}^d)$, 
\begin{equation*}
E_x\left[u(X_t)\right]-u(x)= E_x\left[\int_0^t{\cal L}^{(1)}u(X_s)\,{\rm d}s\right], 
\quad t\geq 0, \ x\in {\mathbb R}^d\setminus N.
\end{equation*}
Moreover, for any $u\in {\cal A}$, this equality is valid  by Lemma \ref{lem:cv-ge} 
so that  a stochastic process 
\begin{equation*}
M_t^{[u]}=u(X_t)-u(X_0)-\int_0^t {\cal L}^{(1)}u(X_s)\,{\rm d}s, \quad t\geq 0
\end{equation*}
is a martingale.

For $r>0$, let $B(r)=\left\{y\in {\mathbb R}^d \mid |y|<r\right\}$ 
and $\sigma_r=\inf\{t\geq 0 \mid |X_t|\leq r\}$  
the hitting time to $\overline{B(r)}$ of the symmetric Hunt process 
${\mathbf M}=(\{X_t\}_{t\geq 0},\{P_x\}_{x\in {\mathbb R}^d\setminus N})$.
Recall that for any $\delta>0$, 
the function $\psi_{\delta}=(1+|x|^2)^{-\delta}$ belongs to the class ${\cal A}$ 
so that $\|{\cal L}^{(1)}\psi_{\delta}\|_{\infty}<\infty$ by Remark \ref{rem:decay}. 
Hence by the optional stopping theorem 
applied to the martingale $(M_t^{[\psi_{\delta}]})_{t\geq 0}$, we obtain
\begin{equation}\label{eq:opt-stop}
E_x\left[\psi_{\delta}(X_{t\wedge \sigma_r})\right]
=\psi_{\delta}(x)
+E_x\left[\int_0^{t\wedge \sigma_r}{\cal L}^{(1)}\psi_{\delta}(X_s)\,{\rm d}s\right].
\end{equation}

Let $\delta\in (0,p-(2-d)/2)$. 
Then by Proposition \ref{prop:liyap}, 
there exists $r>0$ such that 
${\cal L}^{(1)}\psi_{\delta}(z)\leq 0$ for any $z\in B(r)^c$.  
Fix $R>r$ and $x\in B(R)^c$. Since
\begin{equation*}
E_x\left[\psi_{\delta}(X_{t\wedge \sigma_r})\right]
\geq E_x\left[\psi_{\delta}(X_{\sigma_r}); \sigma_r\leq t\right]
\geq \left(\inf_{y\in \overline{B(r)}}\psi_{\delta}(y)\right)P_x(\sigma_r\leq t)
\end{equation*}
and
\begin{equation*}
\begin{split}
\psi_{\delta}(x)
+E_x\left[\int_0^{t\wedge \sigma_r}{\cal L}^{(1)}\psi_{\delta}(X_s)\,{\rm d}s\right]
=\psi_{\delta}(x)
+E_x\left[\int_0^{t\wedge \sigma_r}{\cal L}^{(1)}\psi_{\delta}(X_{s-})\,{\rm d}s\right]
\leq \sup_{w\in B(R)^c}\psi_{\delta}(w),
\end{split}
\end{equation*}
we have by \eqref{eq:opt-stop},
\begin{equation*}
P_x(\sigma_r\leq t)
\leq \frac{\sup_{w\in B(R)^c}\psi_{\delta}(w)}{\inf_{y\in \overline{B(r)}}\psi_{\delta}(y)}, 
\quad x\in B(R)^c.
\end{equation*}
Letting $t\rightarrow\infty$, we get 
\begin{equation*}
P_x(\sigma_r<\infty)
\leq \frac{\sup_{w\in B(R)^c}\psi_{\delta}(w)}{\inf_{y\in \overline{B(r)}}\psi_{\delta}(y)}<1, \quad x\in B(R)^c
\end{equation*}
so that $({\cal E}^{(1)}, {\cal F}^{(1)})$ is not recurrent. 
Hence it is transient by irreducibility and Lemma \ref{lem:irreducible}.
Theorem \ref{thm:comparison} further implies that  
$({\cal E}^{(1)}, {\cal F}^{(1)})$ is transient for any $p>(2-d)/2$ under Assumption \ref{assum:lower}.  
\end{proof}

\subsection{Big jump part}\label{subsect:big}
In this subsection, we assume the next condition 
on the measure $\nu$ in Assumption \ref{assum:lower}:
\begin{assum}
The measure $J({\rm d}x\,{\rm d}y)$ 
satisfies Assumption {\rm \ref{assum:lower}} with 
$$\nu({\rm d}z)={\bf 1}_{\{|z|\geq 1\}}\frac{{\rm d}z}{|z|^{d+\alpha}}$$ 
for some $\alpha\in (0,2)$. 
\end{assum}

This assumption says that $\nu$ is the restriction on $\{|z|\geq 1\}$ of 
the L\'evy measure of a symmetric $\alpha$-stable process up to constant. 
We now prove 
\begin{thm}\label{thm:tran-bg} 
If there exist $c>0$ and $q>(\alpha-d)/2$ such that 
$$a_2(x)\geq c(1+|x|^2)^q, \quad x\in {\mathbb R}^d,$$
then $({\cal E}^{(2)},{\cal F}^{(2)})$ is transient.
\end{thm}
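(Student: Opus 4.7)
The plan is to mirror the argument for the small-jump case: apply the Lyapunov method to the generator ${\cal L}^{(2)}$ using a test function of the form $\psi_\delta(x)=(1+|x|^2)^{-\delta}$, invoke the martingale characterization, and deduce non-recurrence by optional stopping. First, by Theorem~\ref{thm:comparison} (the comparison principle) I reduce to the case where equality holds in~\eqref{eq:jump-lower} and $a_2(x)=c(1+|x|^2)^q$ for $q$ in a suitable interval bounded above by the threshold at which the form or the martingale problem becomes ill-posed (analogous to the restriction $p\in((2-d)/2,1]$ in the proof of Theorem~\ref{thm:tran-sm}). The full range $q>(\alpha-d)/2$ then follows by comparison.

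On a suitable class of test functions with sufficient decay, the big-jump generator reads
\[
{\cal L}^{(2)}f(x)=\int_{|z|\geq 1}(f(x+z)-f(x))(a_2(x)+a_2(x+z))\,\frac{{\rm d}z}{|z|^{d+\alpha}}.
\]
No Taylor correction is needed (since $|z|\geq 1$), but estimating this integral is considerably more delicate than in the small-jump case: $a_2(x+z)\sim|z|^{2q}$ at infinity, and the pieces $f(x)a_2(x+z)$ and $f(x+z)a_2(x+z)$ need not be individually integrable in $z$. One must exploit the cancellation inside $(f(x+z)-f(x))a_2(x+z)$ and the rotational invariance of $\nu$ via the symmetrization $z\leftrightarrow -z$. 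Lemma~\ref{lem:derivative}, deferred to Section~\ref{sect:lem}, is evidently the vehicle that packages these sharp estimates.

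The test function $\psi_\delta(x)=(1+|x|^2)^{-\delta}$ will be taken with $\delta$ in the open interval $(0,\,q-(\alpha-d)/2)$, which is non-empty precisely under the hypothesis $q>(\alpha-d)/2$. The goal is the Lyapunov bound
\[
{\cal L}^{(2)}\psi_\delta(x)\leq -C\psi_\delta(x)(1+|x|^2)^{q-\alpha/2},\qquad |x|\geq M,
\]
paralleling Proposition~\ref{prop:liyap} with the role of $1-p$ played by $\alpha/2-q$. Establishing this is the main obstacle and the most technical step: one partitions the $z$-integration into the regions $1\leq|z|\leq|x|/2$ (where a Taylor-like expansion in $|z|/|x|$ is available), the transition annulus $|z|\asymp|x|$ (isolating the neighborhood of $z=-x$ where $\psi_\delta(x+z)$ rises toward $1$ and contributes positively, but in a bounded way), and $|z|>2|x|$ (dominated by $-\psi_\delta(x)a_2(x+z)$, with convergence secured only after combining with $\psi_\delta(x+z)a_2(x+z)$). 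Scaling and rotational symmetry then yield the asymptotics, and a careful count of signs — cf.\ the analogous computation in Proposition~\ref{prop:liyap} — shows that the leading coefficient is strictly negative throughout the prescribed range of $\delta$.

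Once the Lyapunov inequality is in place, the remainder of the argument is essentially verbatim from Theorem~\ref{thm:tran-sm}. One needs the martingale identity for $M_t^{[\psi_\delta]}=\psi_\delta(X_t)-\psi_\delta(X_0)-\int_0^t{\cal L}^{(2)}\psi_\delta(X_s)\,{\rm d}s$, established by a cutoff argument analogous to Lemma~\ref{lem:cv-ge}, adapted to the big-jump operator (controlling $\|{\cal L}^{(2)}(\psi_\delta\phi_n)\|_\infty$ uniformly in $n$ and passing to the pointwise limit via dominated convergence). Optional stopping at $t\wedge\sigma_r$ then yields
\[
P_x(\sigma_r<\infty)\leq\frac{\sup_{|w|>R}\psi_\delta(w)}{\inf_{|y|\leq r}\psi_\delta(y)}<1,\qquad |x|>R>r\geq M,
\]
ruling out recurrence of the reduced form; irreducibility and Lemma~\ref{lem:irreducible} upgrade this to transience, and Theorem~\ref{thm:comparison} finishes the general case.
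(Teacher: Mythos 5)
Your overall strategy is structurally the same as the paper's: reduce by Theorem~\ref{thm:comparison} to $a_2(x)=(1+|x|^2)^q$ with $q<\alpha/2$, take $\psi_\delta$ as a Lyapunov function, establish an analogue of Lemma~\ref{lem:cv-ge} for ${\cal L}^{(2)}$ (this is Lemma~\ref{lem:cv-big-ge}), apply optional stopping, and finish via irreducibility and Lemma~\ref{lem:irreducible}. What falls short is your sketch of the Lyapunov inequality (Proposition~\ref{prop:liyap-big}), which you correctly flag as the main obstacle but describe by an argument that would not succeed.

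You propose to split the $z$-integral into $1\le|z|<|x|/2$, $|z|\asymp|x|$, and $|z|>2|x|$, Taylor-expand on the inner region, and close by a ``careful count of signs analogous to Proposition~\ref{prop:liyap}.'' That analogy breaks down. In the small-jump case the entire integration domain lies in the Taylor regime, so the sign of ${\cal L}^{(1)}\psi_\delta$ is governed by a single algebraic expression in $\delta,p,d$. In the big-jump case \emph{all three} regions contribute at the same leading order $\psi_\delta(x)(1+|x|^2)^{q-\alpha/2}$ (the inner region because $\int_1^{|x|/2}r^{1-\alpha}\,{\rm d}r\asymp|x|^{2-\alpha}$ for $\alpha<2$), and the net coefficient is a genuine integral whose sign cannot be read off by inspection; the positive contribution of the vicinity of $z=-x$ is in fact subleading, not merely bounded. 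The paper's actual route is the substitution $z=\langle x\rangle u$ plus polar coordinates, which yields
\[
{\cal L}^{(2)}\psi_\delta(x)=\frac{\omega_{d-2}\psi_\delta(x)}{(1+|x|^2)^{\alpha/2-q}}\,F_q(\delta,x),\qquad F_q(\delta,x)\to\Lambda_q(\delta)\ \ (|x|\to\infty),
\]
and the whole burden is to show $\Lambda_q(\delta)<0$ for small $\delta>0$. This is precisely what Lemma~\ref{lem:derivative} delivers, and it is not a sign count: part~(3) is an exact identity, $\Lambda_q'(0)=0$ at the critical exponent $q=(\alpha-d)/2$, obtained via Mellin transforms and Gamma/digamma/hypergeometric summations. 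Only this borderline cancellation, combined with the monotonicity of $\Lambda_q'(\delta)$ in $\delta$ (and in $q$), forces $\Lambda_q(\delta)<0$ for $q>(\alpha-d)/2$ and $\delta$ small. Without computing this integral explicitly you have no control on the sign, and your proposal gives no mechanism for detecting the knife-edge $q=(\alpha-d)/2$.

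Two smaller inaccuracies. First, you assert the range $\delta\in(0,q-(\alpha-d)/2)$ by analogy with Proposition~\ref{prop:liyap}; the paper's Proposition~\ref{prop:liyap-big} provides only some $\delta_0>0$ coming from the integrability constraints in Lemma~\ref{lem:derivative}, with no claim that it equals $q-(\alpha-d)/2$. A single admissible $\delta$ suffices for the stopping argument, so this does not affect the conclusion, but it should not be stated as established. Second, the integrability of the generator applied to $f\in{\cal A}$ is not a cancellation phenomenon inside $f(x+z)-f(x)$; it comes directly from $q<\alpha/2$, which makes $a_2(x+z)|z|^{-(d+\alpha)}$ integrable at infinity, as Lemma~\ref{lem:cv-big-ge} shows.
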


In the similar way as Theorem \ref{thm:tran-sm},  
we prove Theorem \ref{thm:tran-bg} by the Lyapunov method;
however, in order to ensure the existence of  the test function,  
we carry out elementary but involved calculation. 

On account of Theorem \ref{thm:comparison}, we may and do assume that the equality holds in \eqref{eq:jump-lower} and 
$a_2(x)=(1+|x|^2)^q$ for some $q<\alpha/2$. 
Let ${\cal L}^{(2)}$ be a linear operator on $C_0^{\infty}({\mathbb R}^d)$ defined by 
\begin{equation}\label{eq:big-gene} 
{\cal L}^{(2)}u(x)
=\int_{|z|\geq 1}\left(u(x+z)-u(x)\right)
\frac{a_2(x)+a_2(x+z)}{|z|^{d+\alpha}}\,{\rm d}z.
\end{equation}
Then by \cite[Theorem 2.2]{SU14} again, 
we know that for any $u\in C_0^{\infty}({\mathbb R}^d)$ and $v\in {\cal F}^{(2)}$,
\begin{equation}
{\cal E}^{(2)}(u,v)=(-{\cal L}^{(2)}u,v)_{L^2({\mathbb R}^d)}.
\end{equation}
In particular, 
we can also define ${\cal L}^{(2)}u$ for $u\in C_b^2({\mathbb R}^d)$ 
by the expression \eqref{eq:big-gene}.

\begin{lem}\label{lem:cv-big-ge}
There exists a sequence $\{\phi_n\}\subset C_0^{\infty}({\mathbb R}^d)$ such that 
if $q<\alpha/2$, then for any $f\in {\cal A}$,
\begin{equation}\label{eq:bdd}
\sup_{n\in {\mathbb N}}\|{\cal L}^{(2)}(f\phi_n)\|_{\infty}<\infty
\end{equation}
and 
\begin{equation}\label{eq:conv-pt}
{\cal L}^{(2)}(f\phi_n)(x)\rightarrow ({\cal L}^{(2)}f)(x) \ (n\rightarrow\infty) 
\quad \text{for any $x\in {\mathbb R}^d$}. 
\end{equation}
\end{lem}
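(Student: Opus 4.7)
The plan is to mimic the proof of Lemma \ref{lem:cv-ge}, using the same cutoff sequence $\phi_n(x) = w(|x|/n)$ defined there. That proof already shows $\phi_n \in {\cal A}$ with bounds uniform in $n$, so for $f \in {\cal A}$ the product $g_n := f\phi_n$ lies in $C_0^\infty({\mathbb R}^d)$ and satisfies $\|g_n\|_\infty \leq \|f\|_\infty$, $\gamma(y)|\langle \nabla g_n(y), z\rangle| \leq c_1|z|$, and $\gamma(y)^2|\langle H(g_n)(y) z, z\rangle| \leq c_2|z|^2$ uniformly in $n$ and $y$. (The case $q \leq 0$ is immediate since $a_2$ is then bounded, so assume $q \geq 0$.) I would split the region in \eqref{eq:big-gene} as $\{|z|\geq 1\} = A_{{\rm in}}(x) \cup A_{{\rm out}}(x)$, where $A_{{\rm in}}(x) = \{1 \leq |z| \leq \gamma(x)/2\}$ and $A_{{\rm out}}(x) = \{|z| > \gamma(x)/2\}\cap\{|z|\geq 1\}$.

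On $A_{{\rm out}}(x)$, I would use $|g_n(x+z) - g_n(x)| \leq 2\|f\|_\infty$ together with $a_2(x) \leq c\gamma(x)^{2q}$ and, since $|z| \geq |x|/2$ there, $a_2(x+z) \leq c|z|^{2q}$. Because $q < \alpha/2$, the integrals $\int_{A_{{\rm out}}(x)}|z|^{-d-\alpha}\,{\rm d}z$ and $\int_{A_{{\rm out}}(x)}|z|^{2q-d-\alpha}\,{\rm d}z$ are of order $\max(1,\gamma(x)^{-\alpha})$ and $\max(1,\gamma(x)^{2q-\alpha})$ respectively; multiplied by $\gamma(x)^{2q}$ and $1$, the outer contribution is uniformly bounded in $x$ and $n$ (using that $\gamma(x) \geq 2$ and $2q - \alpha < 0$).

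On $A_{{\rm in}}(x)$ the naive bound $|g_n(x+z) - g_n(x)| \leq c|z|/\gamma(x)$ is insufficient when $\alpha > 1$ and $q > 1/2$, so I would reintroduce a drift compensator by writing $g_n(x+z) - g_n(x) = (g_n(x+z) - g_n(x) - \langle\nabla g_n(x), z\rangle) + \langle\nabla g_n(x), z\rangle$. The first piece satisfies $|g_n(x+z) - g_n(x) - \langle\nabla g_n(x), z\rangle| \leq c|z|^2/\gamma(x)^2$ by the Hessian bound, and pairing this with $a_2(x) + a_2(x+z) \leq c\gamma(x)^{2q}$ and integrating against $|z|^{-d-\alpha}$ over $A_{{\rm in}}(x)$ yields $c\gamma(x)^{2q-\alpha}$ after using $\alpha < 2$. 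For the compensator piece, the rotational symmetry of $\nu$ on the symmetric set $A_{{\rm in}}(x)$ eliminates the $a_2(x)$ contribution, while the $a_2(x+z)$ part equals $\frac{1}{2}\int_{A_{{\rm in}}(x)}\langle\nabla g_n(x), z\rangle(a_2(x+z) - a_2(x-z))|z|^{-d-\alpha}\,{\rm d}z$. A Taylor expansion of $a_2$ gives $|a_2(x+z) - a_2(x-z)| \leq c|z|\gamma(x)^{2q-1}$ on $A_{{\rm in}}(x)$, and combined with $|\nabla g_n(x)| \leq c/\gamma(x)$ this piece is also of order $\gamma(x)^{2q-\alpha}$. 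Summing all contributions proves \eqref{eq:bdd}.

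For \eqref{eq:conv-pt}, I would fix $x$ and apply dominated convergence: the integrand converges pointwise in $z$ as $n \to \infty$ (since $\phi_n \to 1$ and $\nabla \phi_n \to 0$ pointwise), and for fixed $x$ the simple bound $|g_n(x+z) - g_n(x)| \leq 2\|f\|_\infty$ together with the integrability of $(a_2(x)+a_2(x+z))|z|^{-d-\alpha}$ over $\{|z|\geq 1\}$ (guaranteed by $q<\alpha/2$) provides a $z$-integrable dominant uniform in $n$. The main obstacle is the inner-annulus analysis for \eqref{eq:bdd}: unlike in \eqref{eq:gene}, there is no compensator built into \eqref{eq:big-gene}, so it must be reintroduced by hand, and the rotational symmetry of $\nu$ combined with a Taylor expansion of $a_2$ must be invoked to extract enough $|z|$-decay to offset the growth of $a_2$.
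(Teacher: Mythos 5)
Your proof follows essentially the same route as the paper's: you reuse the cutoff $\phi_n(x)=w(|x|/n)$, split the integration domain at $|z|=\gamma(x)/2$, reintroduce the drift compensator $\langle\nabla g_n(x),z\rangle$ on the inner annulus $1\leq|z|<\gamma(x)/2$ and then symmetrize to get $\frac12(a_2(x+z)-a_2(x-z))$, and invoke the Hessian bound for $g_n$ and the Taylor estimate $|a_2(x+z)-a_2(x-z)|\leq c|z|\gamma(x)^{2q-1}$. This is exactly the decomposition in \eqref{eq:gene-1} and the ensuing estimates in the paper's proof.

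The one place you diverge is the outer region $|z|\geq\gamma(x)/2$. You use the crude bound $a_2(x+z)\leq c|z|^{2q}$ and $\int_{|z|\geq\gamma(x)/2}|z|^{2q-d-\alpha}\,{\rm d}z\lesssim\gamma(x)^{2q-\alpha}$, which is bounded since $2q-\alpha<0$, and you dispatch $q\leq 0$ separately as trivial. The paper instead subdivides the outer region further into the middle shell $\gamma(x)/2\leq|z|<2\gamma(x)$ and the far range $|z|\geq 2\gamma(x)$, integrates $a_2$ over balls in the middle shell, and obtains the case-dependent quantity $F_q(x)$ (which has extra decay when $q\leq -d/2$). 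That finer estimate is not needed for \eqref{eq:bdd} or \eqref{eq:conv-pt}; it is recorded because the Remark after the lemma uses the pointwise decay rate $|({\cal L}^{(2)}f)(x)|\leq cF_q(x)$. Your argument proves the lemma as stated but would not directly yield that remark; that is the only substantive difference, and it does not affect the correctness of your proposal.
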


\begin{proof} 
Let $\{\phi_n\}\subset C_0^{\infty}({\mathbb R}^d)$ be 
the same sequence as in the proof of Lemma \ref{lem:cv-ge}.
According to \cite[Theorem 2.2]{SU14}, 
we have  for any $u\in C_0^{\infty}({\mathbb R}^d)$, 
\begin{equation}\label{eq:gene-1}
\begin{split}
{\cal L}^{(2)}u(x)
&=\int_{1\leq |z|<\gamma(x)/2}\left(u(x+z)-u(x)-\langle \nabla u(x), z\rangle \right)
\frac{a_2(x)+a_2(x+z)}{|z|^{d+\alpha}}\,{\rm d}z\\
&+\frac{1}{2}\int_{1\leq |z|<\gamma(x)/2}\langle\nabla u(x),z\rangle
\frac{a_2(x+z)-a_2(x-z)}{|z|^{d+\alpha}}\,{\rm d}z\\
&+\int_{|z|\geq \gamma(x)/2}\left(u(x+z)-u(x)\right)
\frac{a_2(x)+a_2(x+z)}{|z|^{d+\alpha}}\,{\rm d}z. 
\end{split}
\end{equation}
This expression is valid by replacing $u$ with $f\in {\cal A}$. 
Then by \eqref{eq:deriv},
\begin{equation*}
\begin{split}
&\int_{1\leq |z|<\gamma(x)/2}\left|f\phi_n(x+z)-f\phi_n(x)-\langle \nabla f\phi_n(x), z\rangle \right|
\frac{a_2(x)+a_2(x+z)}{|z|^{d+\alpha}}\,{\rm d}z\\
&\leq \frac{c_1a_2(x)}{\gamma(x)^2}
\int_{1\leq |z|<\gamma(x)/2}|z|^{2-(d+\alpha)}\,{\rm d}z
\leq c_2(1+|x|^2)^{q-\alpha/2}.
\end{split}
\end{equation*}

By the Taylor theorem, there exists $c_3>0$ such that  
\begin{equation*}
|a_2(x+z)-a_2(x-z)|\leq c_3(1+|x|^2)^{q-1/2}|z|, \quad 
x\in {\mathbb R}^d, \ 1\leq |z|<\gamma(x)/2.
\end{equation*}
Then for $f\in {\cal A}$, we get by \eqref{eq:grad},
$$
\int_{1\leq |z|<\gamma(x)/2}
\left|\langle\nabla (f\phi_n)(x),z\rangle
\frac{a_2(x+z)-a_2(x-z)}{|z|^{d+\alpha}}\right|\,{\rm d}z
\leq c_4(1+|x|^2)^{q-\alpha/2}.
$$

Let 
$$F_q(x)=
\begin{cases}
(1+|x|^2)^{q-\alpha/2}, & -d/2<q<\alpha/2, \\ 
(1+|x|^2)^{-(d+\alpha)/2}\log(2+|x|), & q=-d/2, \\
(1+|x|^2)^{-(d+\alpha)/2}, & q<-d/2.\end{cases}
$$
If $|z|<2\gamma(x)$, then $|x+z|\leq c(1+|x|)$ for some $c>0$ so that 
\begin{equation}\label{eq:bound-1}
\begin{split}
\int_{\gamma(x)/2\leq |z|<2\gamma(x)}\frac{a_2(x+z)}{|z|^{d+\alpha}}\,{\rm d}z
&\leq \frac{c_5}{\gamma(x)^{d+\alpha}}
\int_{|x+z|<c(1+|x|)}a_2(x+z)\,{\rm d}z\\
&=\frac{c_5}{\gamma(x)^{d+\alpha}}
\int_{|z|<c(1+|x|)}a_2(z)\,{\rm d}z
\leq c_6F_q(x).
\end{split}
\end{equation}
On the other hand, if $|z|\geq 2\gamma(x)$, then 
$c(1+|x|)\leq |x+z|\leq |x|+|z|$ for some $c>0$, which yields 
\begin{equation*}
\int_{|z|\geq 2\gamma(x)}\frac{a_2(x+z)}{|z|^{d+\alpha}}\,{\rm d}z
\leq c_7\int_{|z|\geq 2\gamma(x)}\frac{a_2(x)+a_2(z)}{|z|^{d+\alpha}}\,{\rm d}z
\leq c_8(1+|x|^2)^{q-\alpha/2}.
\end{equation*}
Combining this with \eqref{eq:bound-1}, 
we obtain 
\begin{equation*}
\begin{split}
&\int_{|z|\geq \gamma(x)/2}\left|f\phi_n(x+z)-f\phi_n(x)\right|
\frac{a_2(x)+a_2(x+z)}{|z|^{d+\alpha}}\,{\rm d}z\\
&\leq c_9\left(a_2(x)\int_{|z|\geq \gamma(x)/2}\frac{{\rm d}z}{|z|^{d+\alpha}}
+\int_{|z|\geq \gamma(x)/2}\frac{a_2(x+z)}{|z|^{d+\alpha}}\,{\rm d}z\right)
\leq c_{10}F_q(x)
\end{split}
\end{equation*}
so that \eqref{eq:bdd} holds. 
We also have \eqref{eq:conv-pt} by \eqref{eq:gene-1} and the dominated convergence theorem. 
\end{proof}

\begin{rem}\rm 
We see from the proof of Lemma \ref{lem:cv-big-ge} 
that for any $f\in {\cal A}$, there exists $c>0$ such that 
$$|({\cal L}^{(2)}f)(x)|\leq cF_q(x), \quad x\in {\mathbb R}^d.$$
\end{rem}

\begin{prop}\label{prop:liyap-big}
Let $q\in ((\alpha-d)/2,\alpha/2)$. 
Then there exists $\delta_0>0$ such that for any $\delta\in (0,\delta_0)$, 
there exist $C>0$ and $M>0$ such that 
\begin{equation}\label{eq:b-neg}
{\cal L}^{(2)}\psi_{\delta}(x)
\leq \frac{-C\psi_{\delta}(x)}{(1+|x|^2)^{\alpha/2-q}}, \quad |x|\geq M.
\end{equation}
\end{prop}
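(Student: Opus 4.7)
The strategy is the Lyapunov method, mirroring the proof of Proposition \ref{prop:liyap}: apply ${\cal L}^{(2)}$ to $\psi_\delta(x)=(1+|x|^2)^{-\delta}$ and show the result is sufficiently negative at infinity. The new subtlety compared with the small-jump case is that $z$ now ranges over an unbounded region, so in addition to Taylor expansion one must exploit the scaling inherent in the stable kernel $|z|^{-d-\alpha}$. I would first invoke the decomposition \eqref{eq:gene-1} of ${\cal L}^{(2)}$ and split
\[
{\cal L}^{(2)}\psi_\delta(x) = I_1(x) + I_2(x) + I_3(x),
\]
with $I_1$ the Taylor-remainder integral on $\{1\leq|z|<\gamma(x)/2\}$, $I_2$ the drift correction coming from the asymmetry $a_2(x+z)-a_2(x-z)$, and $I_3$ the tail integral on $\{|z|\geq\gamma(x)/2\}$.

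For $I_1$ and $I_2$ the treatment parallels Proposition \ref{prop:liyap}: substitute the integral form of the Taylor remainder, use the explicit formula $\nabla\psi_\delta(x)=-2\delta x\psi_\delta(x)/(1+|x|^2)$ and the analogous one for $H\psi_\delta$ (whose entries are of order $\delta(1+|x|^2)^{-\delta-1}$ on $\{|z|<\gamma(x)/2\}$), apply the mean-value bound $|a_2(x+z)-a_2(x-z)|\leq c(1+|x|^2)^{q-1/2}|z|$, and carry out the radial integration $\int_1^{\gamma(x)/2} r^{1-\alpha}\,dr\asymp (1+|x|^2)^{1-\alpha/2}/(2-\alpha)$ (using $\alpha<2$). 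The outcome has the form
\[
I_1(x)+I_2(x) = c_{12}(\delta)\,\psi_\delta(x)(1+|x|^2)^{q-\alpha/2}\bigl(1+o(1)\bigr), \quad |x|\to\infty,
\]
with $c_{12}(\delta)=O(\delta)$ as $\delta\to 0$ and explicitly computable in terms of $d,q,\alpha$.

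For $I_3$ I would rescale $z=|x|\eta$, which normalizes the integration domain to $\{|\eta|\geq 1/2\}$ in the large-$|x|$ limit, and invoke rotational invariance to regard $\hat x=x/|x|$ as fixed. Since $\psi_\delta(|x|(\hat x+\eta))\sim|x|^{-2\delta}|\hat x+\eta|^{-2\delta}$ and $a_2(|x|(\hat x+\eta))\sim|x|^{2q}|\hat x+\eta|^{2q}$ in the bulk (the small region near $\eta=-\hat x$ where these surrogates fail contributes only lower-order terms absorbable into $o(1)$), one obtains
\[
I_3(x) = \psi_\delta(x)(1+|x|^2)^{q-\alpha/2}\bigl[G(\delta)+o(1)\bigr],
\]
where
\[
G(\delta) = \int_{|\eta|\geq 1/2}\bigl(|\hat x+\eta|^{-2\delta}-1\bigr)\bigl(1+|\hat x+\eta|^{2q}\bigr)|\eta|^{-d-\alpha}\,d\eta.
\]
The hypothesis $(\alpha-d)/2<q<\alpha/2$ ensures absolute convergence of $G(\delta)$ for $\delta$ near $0$ (the constraints being $q>-d/2$ at the singularity $|\hat x+\eta|=0$ and $q<\alpha/2$ at infinity), and $G(0)=0$ is immediate.

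The main obstacle is to show that the total leading $\delta$-coefficient $c_{12}'(0)+G'(0)$ is strictly negative, where
\[
G'(0) = -2\int_{|\eta|\geq 1/2}\log|\hat x+\eta|\bigl(1+|\hat x+\eta|^{2q}\bigr)|\eta|^{-d-\alpha}\,d\eta.
\]
Since $\log|\hat x+\eta|$ changes sign at $|\hat x+\eta|=1$, verifying this negativity requires a careful balance between the regions $|\hat x+\eta|\lessgtr 1$, with the lower bound $q>(\alpha-d)/2$ being precisely what makes the negative (transience-inducing) contribution prevail. This is the elementary but involved calculation flagged in the introduction, encapsulated in Lemma \ref{lem:derivative}. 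Once its conclusion is in hand, I would choose $\delta_0$ small enough that $c_{12}(\delta)+G(\delta)\leq -c\delta$ on $(0,\delta_0)$, and then take $M$ large enough to absorb the $o(1)$ errors uniformly; this yields \eqref{eq:b-neg} with some $C>0$ proportional to $\delta$.
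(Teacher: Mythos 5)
Your proposal is substantively correct and follows the same overall strategy as the paper: apply the Lyapunov method with $\psi_\delta$, show that ${\cal L}^{(2)}\psi_\delta(x)$ behaves like $\psi_\delta(x)(1+|x|^2)^{q-\alpha/2}$ times a $\delta$-dependent constant as $|x|\to\infty$, and prove that constant is strictly negative for small $\delta>0$. The route differs at the intermediate level. You reuse the Taylor-corrected split \eqref{eq:gene-1} (introduced by the paper only to prove Lemma~\ref{lem:cv-big-ge}) and rescale each of the three pieces; the paper instead factors out $\psi_\delta(x)a_2(x)$ from the raw expression \eqref{eq:big-gene}, performs a single change of variables $z=\langle x\rangle u$ followed by polar coordinates, and obtains the single double integral $F_q(\delta,x)$ of \eqref{eq:test-b} with no Taylor split needed. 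The two decompositions are equivalent — your $c_{12}(\delta)+G(\delta)$ is a reparametrization of the paper's $\omega_{d-2}\Lambda_q(\delta)$ — but the paper's one-step change of variables is appreciably cleaner here, since the big-jump kernel has exact scaling and the drift/remainder corrections carry no benefit. Two points you should firm up. First, your central claim $c_{12}'(0)+G'(0)<0$ amounts to $\Lambda_q'(0)<0$, but Lemma~\ref{lem:derivative}(3) only establishes the \emph{equality} $\Lambda_q'(0)=0$ at the endpoint $q=(\alpha-d)/2$; to get strict negativity on the open interval you must also observe that $\Lambda_q'(0)$ is strictly decreasing in $q$ (immediate from $\partial_q K_q(0,r,s)=(\log u)^2u^q\geq 0$ pointwise, but not contained in the lemma) — you allude to this ("the lower bound $q>(\alpha-d)/2$ ... makes the negative contribution prevail") without recording it. Second, the convergence-and-uniformity bookkeeping near the singularity $\eta=-\hat x$ and at $|\eta|=\infty$ (which forces you to choose $\delta_0$ small relative to $q+d/2$) is exactly what the paper carries out with the auxiliary parameter $\eta$ in \eqref{eq:eta} and the choice $\delta_0=\eta+1/2$; you gesture at it but the uniformity in $x$ of the $o(1)$ errors is the part that actually needs to be checked to justify passing to the limit and choosing $M$.
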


To show Proposition \ref{prop:liyap-big}, 
we first calculate ${\cal L}^{(2)}\psi_{\delta}(x)$ for $\delta>0$. 
In what follows, we present the proof of Proposition \ref{prop:liyap-big} 
only for $d\geq 2$ and $q\in ((\alpha-d)/2,0)$, 
but the same argument applies for other cases. 

Assume that $d\geq 2$ and $q<0$. 
Let $\langle x\rangle=\sqrt{1+|x|^2}$ for $x\in {\mathbb R}^d$. 
We define for $\delta\geq 0$, $x\in {\mathbb R}^d$, $r\geq 0$ and $s\in [0,1]$,
\begin{equation*}
\begin{split}
J_q(\delta,x,r,s)
&=\left[\frac{1}{(1+r^2+2|x|sr/\langle x\rangle)^{\delta}}-1\right]
\left[1+\left(1+r^2+2|x|sr/\langle x\rangle\right)^q\right]\\
&+\left[\frac{1}{(1+r^2-2|x|sr/\langle x\rangle)^{\delta}}-1\right]
\left[1+\left(1+r^2-2|x|sr/\langle x\rangle\right)^q\right].
\end{split}
\end{equation*}
Let $\omega_0=2$, and 
let $\omega_{d-2}$ be the area of the $d-2$ dimensional unit surface for $d\geq 3$. 
Then for $\delta>0$ and $x\in {\mathbb R}^d$,
\begin{equation}\label{eq:test-b}
\begin{split}
{\cal L}^{(2)}\psi_{\delta}(x)
&=\int_{|z|\geq 1}(\psi_{\delta}(x+z)-\psi_{\delta}(x))\frac{a_2(x)+a_2(x+z)}{|z|^{d+\alpha}}\,{\rm d}z\\
&=\psi_{\delta}(x)a_2(x)\int_{|z|\geq 1}\left[\frac{\psi_{\delta}(x+z)}{\psi_{\delta}(x)}-1\right]
\frac{1+a_2(x+z)/a_2(x)}{|z|^{d+\alpha}}\,{\rm d}z\\
&=\psi_{\delta}(x)a_2(x)
\int_{|z|\geq 1}
\left[\frac{(1+|x|^2)^{\delta}}{(1+|x+z|^2)^{\delta}}-1\right]
\frac{1+(1+|x+z|^2)^q/(1+|x|^2)^q}{|z|^{d+\alpha}}\,{\rm d}z\\
&=\frac{\omega_{d-2}\psi_{\delta}(x)}{(1+|x|^2)^{\alpha/2-q}}
\int_{1/{\sqrt{1+|x|^2}}}^{\infty}
\left(\int_0^1 J_q(\delta,x,r,s)(1-s^2)^{(d-3)/2}\,{\rm d}s\right)\frac{{\rm d}r}{r^{1+\alpha}}.
\end{split}
\end{equation}
At the fourth equality above
we used the change of variables formula ($z=\langle x\rangle u$) 
and then the polar coordinate expression. 

Let 
$$F_q(\delta,x)=\int_{1/{\sqrt{1+|x|^2}}}^{\infty}
\left(\int_0^1 J_q(\delta,x,r,s)(1-s^2)^{(d-3)/2}\,{\rm d}s\right)\frac{{\rm d}r}{r^{1+\alpha}}, 
\quad \delta\geq 0, \ x\in {\mathbb R}^d.$$
Let $\tilde{J}_q(\delta,r,s)=\lim_{|x|\rightarrow\infty}J_q(\delta,x,r,s)$ and 
$$\Lambda_q(\delta)=\int_0^{\infty}
\left(\int_0^1 \tilde{J}_q(\delta,r,s)(1-s^2)^{(d-3)/2}\,{\rm d}s\right)\frac{{\rm d}r}{r^{1+\alpha}}, 
\quad \delta\geq 0.$$
Note that 
\begin{equation*}
\begin{split}
\tilde{J}_q(\delta,r,s)
&=\left[\frac{1}{(r^2+2rs+1)^{\delta}}-1\right](1+(r^2+2rs+1)^q)\\
&+\left[\frac{1}{(r^2-2rs+1)^{\delta}}-1\right](1+(r^2-2rs+1)^q), \quad \delta\geq 0, \ r\geq 0, \ s\in [0,1).
\end{split}
\end{equation*}
We then define
\begin{equation*}
K_q(\delta,r,s)=
\frac{\log(r^2+2rs+1)}{(r^2+2rs+1)^{\delta}}[1+(r^2+2rs+1)^q], 
\quad \delta\in {\mathbb R}, \ r\geq 0, \ s\in (-1,1).
\end{equation*}

\begin{lem}\label{lem:derivative}
Let $d\geq 2$ and $q\in [(\alpha-d)/2,0)$.
\begin{enumerate}
\item[\rm (1)] There exists $\delta_0>0$ such that for any $\delta\in [0,\delta_0)$, 
\begin{equation}\label{eq:lim}
F_q(\delta,x)\rightarrow \Lambda_q(\delta) \quad \text{as $|x|\rightarrow\infty$.}
\end{equation}

\item[{\rm (2)}] 
Let $\delta_0>0$ be as in {\rm (1)} and $\delta\in [0,\delta_0)$. Then 
\begin{equation}\label{eq:lim-1}
\Lambda_q'(\delta)=-\int_0^{\infty}\left(\int_{-1}^1K_q({\delta},r,s)(1-s^2)^{(d-3)/2}\,{\rm d}s\right)
\frac{{\rm d}r}{r^{1+\alpha}}.
\end{equation}
\item[{\rm (3)}] If $q=(\alpha-d)/2$, then $\Lambda_q'(0)=0$.
\end{enumerate}
\end{lem}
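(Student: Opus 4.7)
The plan is to prove (1) by dominated convergence, (2) by differentiation under the integral sign, and (3) by a Kelvin-inversion symmetry that becomes exact at the critical exponent $2q=\alpha-d$.

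For (1), pointwise convergence $J_q(\delta,x,r,s)\to\tilde J_q(\delta,r,s)$ is immediate from $|x|/\langle x\rangle\to 1$, so what remains is to produce a dominator integrable against $(1-s^2)^{(d-3)/2}\,ds\,dr/r^{1+\alpha}$ on $(0,\infty)\times[0,1]$. I split the $(r,s)$-domain into a neighbourhood of $r=0$, the near-singular zone where $B:=1+r^2-2rs$ is small (which shrinks to the single point $(r,s)=(1,1)$ in the limit $|x|\to\infty$), and the complement. A two-term Taylor expansion at $r=0$ gives $|J_q|\lesssim r^2$, so $|J_q|/r^{1+\alpha}\lesssim r^{1-\alpha}$ is integrable since $\alpha<2$; for large $r$ the crude bound $|J_q|\le 4$ yields the integrable tail $r^{-1-\alpha}$; and in the near-singular zone one uses $|[B^{-\delta}-1][1+B^q]|\lesssim B^{q-\delta}$, which in local polar coordinates $(\xi,v)=(r-1,\sqrt{2(1-s)})$ becomes $R^{2(q-\delta)+d-1}\sin^{d-2}\theta$, integrable at $R=0$ provided $\delta<d/2+q$. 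Since $q\ge(\alpha-d)/2$ throughout, one may take $\delta_0=\alpha/2$.

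For (2), direct differentiation gives $\partial_\delta\tilde J_q(\delta,r,s)=-K_q(\delta,r,s)-K_q(\delta,r,-s)$; the extra factor $|\log B|$ is absorbed into $B^{-\varepsilon}$ for any $\varepsilon>0$, so the dominator of (1) with a marginally smaller $\delta_0$ remains valid and Leibniz's rule applies. Evenness of $(1-s^2)^{(d-3)/2}$ in $s$ then combines the $\pm s$ pieces into one integral over $[-1,1]$, giving the stated formula.

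For (3), I convert the $(r,s)$-double integral to an integral over $\mathbb R^d$ via the spherical identity
\begin{equation*}
\int_{-1}^1F(r^2+2rs+1)(1-s^2)^{(d-3)/2}\,ds=\frac{1}{\omega_{d-2}}\int_{S^{d-1}}F(|r\omega+e_1|^2)\,d\sigma(\omega),
\end{equation*}
polar coordinates $dz=r^{d-1}\,dr\,d\sigma(\omega)$, and the translation $y=z+e_1$, obtaining
\begin{equation*}
\omega_{d-2}\,\Lambda_q'(0)=-2\int_{\mathbb R^d}\log|y|\,\bigl[1+|y|^{2q}\bigr]\,\frac{dy}{|y-e_1|^{d+\alpha}}.
\end{equation*}
I then apply the Kelvin inversion $y=w/|w|^2$, for which $|y|=1/|w|$, $|y-e_1|=|w-e_1|/|w|$ and $dy=|w|^{-2d}\,dw$. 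The integrand transforms into $-\log|w|\,[|w|^{\alpha-d}+|w|^{\alpha-d-2q}]/|w-e_1|^{d+\alpha}$, which at the critical value $2q=\alpha-d$ collapses to $-\log|w|\,[1+|w|^{2q}]/|w-e_1|^{d+\alpha}$; the Kelvin-transformed integral is thus the negative of the original, forcing both to vanish.

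The main technical obstacle is a conditional-convergence issue at $y=e_1$ in part (3): the integrand behaves like $(y-e_1)_1/|y-e_1|^{d+\alpha}$ there, which fails to be absolutely integrable when $\alpha\ge 1$. One must therefore interpret $\Lambda_q'(0)$ as the iterated integral arising from the $(r,s)$-form (equivalently, as a principal value around $y=e_1$) and verify that the Kelvin inversion — which fixes $e_1$ and linearises there as the reflection $w\mapsto w-2w_1e_1$, under which the leading singular piece is odd — preserves this principal value.
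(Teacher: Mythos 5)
Your parts (1) and (2) follow essentially the same route as the paper's: dominate $J_q$ (and $\partial_\delta\tilde J_q$) region by region and invoke dominated convergence. The decompositions differ cosmetically (you split at the moving near-singular zone where $1+r^2-2rs$ is small; the paper uses the fixed intervals $r\leq 1/2$, $1/2\leq r\leq 3/2$, $r\geq 2$) but the estimates and the resulting constraint $\delta_0<q+d/2$ are the same, so I will not dwell on (1)--(2).

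Your part (3) is a genuinely different and considerably more conceptual argument than the paper's. The paper expands $\log(r^2+2rs+1)$ and $(r^2+2rs+1)^q$ in Taylor series, integrates term by term using beta-function identities and \cite[p.39, (4.41)]{O74}, manipulates the resulting sums via digamma/Pochhammer identities, and closes with the Gauss $\,_2F_1(1)$ evaluation \cite[p.174, (7.4.14)]{H75} to show $I_1=-I_2$. Your approach instead rewrites $\Lambda_q'(0)$ as a single integral $-\frac{2}{\omega_{d-2}}\int_{\mathbb R^d}\log|y|\,[1+|y|^{2q}]\,|y-e_1|^{-(d+\alpha)}\,dy$ via the zonal-function identity (with the paper's convention $\omega_0=2$ this normalisation is correct) and observes that the Kelvin inversion $y\mapsto y/|y|^2$, which sends $|y|\mapsto 1/|y|$, $|y-e_1|\mapsto|y-e_1|/|y|$, and $dy\mapsto|y|^{-2d}dy$, negates the integrand exactly when $2q=\alpha-d$; the computation checks out. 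The payoff is that this reveals $\Lambda'_{(\alpha-d)/2}(0)=0$ as a symmetry statement rather than an identity among special functions, and avoids any series manipulation.

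You are also right to flag the conditional convergence at $y=e_1$ as the main technical point; for $\alpha\geq 1$ the $\mathbb R^d$-integral is only a principal value there (the leading singular term $2(y-e_1)_1/|y-e_1|^{d+\alpha}$ is not absolutely integrable), and the iterated $(r,s)$-integral corresponds precisely to the spherical principal value around $e_1$ because $|z|=r$ becomes $|y-e_1|=r$ after translating. Your sketch of why Kelvin inversion respects this (it fixes $e_1$, its differential there is the reflection $I-2e_1e_1^T$, and the leading singular piece is odd under that reflection, so the discrepancy between the two $\epsilon$-excisions contributes $O(\epsilon^{2-\alpha})$) is correct but is indeed only a sketch; in a final write-up you should carry out the $\epsilon\to 0$ comparison of the two truncated integrals explicitly, since this is where a careless reader could worry the argument is circular. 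It is also worth noting that the change of variable cannot be performed in the $(r,s)$-coordinates alone: $r\mapsto 1/r$ does not negate $K_q(0,\cdot,s)$ because the $s$-dependent argument transforms awkwardly, so passing to $\mathbb R^d$ and using the full Kelvin map is essential, not merely convenient.
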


We postpone the proof of Lemma \ref{lem:derivative} to
the next section and prove Proposition {\rm \ref{prop:liyap-big}}.

\begin{proof}[Proof of Proposition {\rm \ref{prop:liyap-big}}]
Let $q=(\alpha-d)/2$. 
Then $\Lambda_q(0)=\Lambda_q'(0)=0$ by Lemma \ref{lem:derivative} (2), (3). 
Moreover, $\Lambda'_q(\delta)$ is decreasing in $\delta\in [0,\delta_0)$ 
because for each $(r,s)\in [0,\infty)\times [0,1)$, 
the function $K_q(\delta,r,s)$ is decreasing in $\delta\in [0,\infty)$. 
This yields $\Lambda_q'(\delta)<0$ and thus $\Lambda_q(\delta)<0$ for any $\delta\in (0,\delta_0)$. 

By Lemma \ref{lem:derivative} (1), 
there exists $M>0$ for any $\delta\in [0,\delta_0)$ such that 
$F_q(\delta,x)\leq \Lambda_q(\delta)/2<0$ for any $x\in {\mathbb R}^d$ with $|x|\geq M$. 
Hence the proof is complete by \eqref{eq:test-b}. 
\end{proof}

\begin{proof}[Proof of Theorem {\rm \ref{thm:tran-bg}}]
By using Proposition \ref{prop:liyap-big}, 
we can follow the proof of Theorem \ref{thm:tran-sm}. 
\end{proof}

\subsection{Applications}\label{subsect:appl}
We apply Theorems \ref{thm:tran-sm} and \ref{thm:tran-bg} to concrete models. 
 
\subsubsection{Necessary and sufficient condition for recurrence}
Let $a_1$ and $a_2$ be Borel measurable functions on ${\mathbb R}^d$ such that 
for some $p,q\in {\mathbb R}$ and some positive constants $c_{11}$, $c_{12}$, $c_{21}$, $c_{22}$,
$$c_{11}(1+|x|)^p\leq a_1(x)\leq c_{12}(1+|x|)^p, 
\quad c_{21}(1+|x|)^q\leq a_2(x)\leq c_{22}(1+|x|)^q, 
\quad x\in {\mathbb R}^d.$$  
We then define $c(x,y)$ as in \eqref{eq:coeff-a}. 

For  fixed  $\alpha\in (0,2)$ and $\beta\in (0,2)$, let 
$$J(x,y)=\frac{c_1(x,y)}{|x-y|^{d+\alpha}}{\bf 1}_{\{|x-y|<1\}}
+\frac{c_2(x,y)}{|x-y|^{d+\beta}}{\bf 1}_{\{|x-y|\geq 1\}}$$
and $J(x,{\rm d}y)=J(x,y)\,{\rm d}y$.
If $q<\beta$, then Assumptions \ref{assum:lower} and \ref{assum:upper} are 
fulfilled with 
$$J_0(x,{\rm d}y)=\left(\frac{1}{|x-y|^{d+\alpha}}{\bf 1}_{\{|x-y|<1\}}
+\frac{1}{|x-y|^{d+\beta}}{\bf 1}_{\{|x-y|\geq 1\}}\right)\,{\rm d}y$$
and
$$\nu({\rm d}z)=\left(\frac{1}{|z|^{d+\alpha}}{\bf 1}_{\{|z|<1\}}
+\frac{1}{|z|^{d+\beta}}{\bf 1}_{\{|z|\geq 1\}}\right)\,{\rm d}z.$$
If $\alpha=\beta$ and $p=q=0$, 
then the symmetric Hunt process generated by $({\cal E},{\cal F})$ 
is the so-called symmetric $\alpha$-stable-like process 
introduced in \cite{CK03}.
Hence for $p\in {\mathbb R}$ and $q<\beta$, 
we can regard $({\cal E},{\cal F})$ as a Dirichlet form of symmetric stable-like 
with unbounded/degenerate coefficients.

By Theorems \ref{thm:tran-sm} and \ref{thm:tran-bg} with Example \ref{exam:rec} below, 
we obtain a necessary and sufficient condition for recurrence of $({\cal E}^{(i)},{\cal F}^{(i)}) \ (i=1,2)$. 
Then by Theorem \ref{thm:comparison} and Example \ref{exam:rec},  
we also get a necessary and sufficient condition for recurrence of $({\cal E},{\cal F})$. 
Summarizing the observations above, we have 
\begin{cor} \label{cor:tran-iff}
Let $p\in {\mathbb R}$ and $q<\beta$. 
Then under the setting in this subsection, the following assertions hold{\rm :} 
\begin{itemize}
\item[{\rm (1)}] 
$({\cal E}^{(1)},{\cal F}^{(1)})$ is recurrent if and only if $p\leq 2-d$. 
\item[{\rm (2)}] 
$({\cal E}^{(2)},{\cal F}^{(2)})$ is recurrent if and only if $q\leq \beta-d$. 
\item[{\rm (3)}] 
$({\cal E},{\cal F})$ is recurrent if and only if 
$p\leq 2-d$ and $q\leq \beta-d$. 
\end{itemize}
\end{cor}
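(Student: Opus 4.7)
The plan is to combine the transience criteria Theorems \ref{thm:tran-sm} and \ref{thm:tran-bg} with the recurrence criterion provided by Example \ref{exam:rec} in the appendix. Since $({\cal E},{\cal F})$ and each $({\cal E}^{(i)},{\cal F}^{(i)})$ are irreducible (as noted in Subsection \ref{sub:set}), Lemma \ref{lem:irreducible} reduces each ``if and only if'' to proving one implication and its contrapositive. A small but essential bookkeeping point is to convert between the polynomial scales $(1+|x|)^p$ used in the corollary and $(1+|x|^2)^p$ used in the hypotheses of Theorems \ref{thm:tran-sm} and \ref{thm:tran-bg}: since $(1+|x|)^p\asymp (1+|x|^2)^{p/2}$, the threshold $p>(2-d)/2$ of Theorem \ref{thm:tran-sm} corresponds to $p>2-d$ here, and similarly for the big jump part.

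For (1), suppose $p>2-d$. The lower bound $a_1(x)\geq c_{11}(1+|x|)^p$ yields $a_1(x)\geq c(1+|x|^2)^{p/2}$ with $p/2>(2-d)/2$, so Theorem \ref{thm:tran-sm} gives transience of $({\cal E}^{(1)},{\cal F}^{(1)})$. Conversely, when $p\leq 2-d$ the upper bound $a_1(x)\leq c_{12}(1+|x|)^p$ together with the $\alpha$-stable structure of $\nu$ on $\{|z|<1\}$ puts us inside the scope of Example \ref{exam:rec}, which provides recurrence of $({\cal E}^{(1)},{\cal F}^{(1)})$.

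For (2) the argument is parallel. Here the restriction of $\nu$ to $\{|z|\geq 1\}$ is the $\beta$-stable L\'evy kernel, so Theorem \ref{thm:tran-bg} is applied with its parameter $\alpha$ replaced by $\beta$; the lower bound $a_2(x)\geq c_{21}(1+|x|)^q\geq c(1+|x|^2)^{q/2}$ together with $q>\beta-d$ (equivalently $q/2>(\beta-d)/2$) then yield transience of $({\cal E}^{(2)},{\cal F}^{(2)})$. The converse direction $q\leq \beta-d$ is again covered by Example \ref{exam:rec} using the upper bound $a_2(x)\leq c_{22}(1+|x|)^q$.

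Finally (3) follows by combining (1), (2), the comparison principle Theorem \ref{thm:comparison}, and irreducibility. If either $p>2-d$ or $q>\beta-d$, then one of $({\cal E}^{(i)},{\cal F}^{(i)})$ is transient by (1) or (2), and since \eqref{eq:comp} records ${\cal F}\subset {\cal F}^{(i)}$ and ${\cal E}^{(i)}(u,u)\leq {\cal E}(u,u)$ on ${\cal F}$, Theorem \ref{thm:comparison} forces $({\cal E},{\cal F})$ to be transient. If instead $p\leq 2-d$ and $q\leq \beta-d$, one appeals to Example \ref{exam:rec} directly on $({\cal E},{\cal F})$ to obtain recurrence. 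The main obstacle in executing this plan is mostly bookkeeping: carefully matching parameter conventions between the several statements and verifying that the hypotheses of Example \ref{exam:rec} are met by each of $({\cal E}^{(1)},{\cal F}^{(1)})$, $({\cal E}^{(2)},{\cal F}^{(2)})$, and $({\cal E},{\cal F})$ in the relevant regimes.
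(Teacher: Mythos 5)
Your proposal is correct and follows essentially the same route as the paper: transience from Theorems \ref{thm:tran-sm} and \ref{thm:tran-bg} (with the unit conversion $(1+|x|)^p\asymp(1+|x|^2)^{p/2}$ and with $\alpha$ replaced by $\beta$ in the big-jump theorem), recurrence from Example \ref{exam:rec} (noting $\beta\wedge 2=\beta$ since $\beta\in(0,2)$), the dichotomy from irreducibility and Lemma \ref{lem:irreducible}, and transfer to $({\cal E},{\cal F})$ via \eqref{eq:comp} and Theorem \ref{thm:comparison}. This matches the paper's argument exactly.
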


We make a comment on transience of regular Dirichlet forms with no killing measure. 
We keep the same conditions on $J(x,{\rm d}y)$ as in Corollary \ref{cor:tran-iff}. 
Let $\{a_{ij}(x)\}_{1\leq i,j\leq d}$ be a family of Borel measurable functions on ${\mathbb R}^d$ 
such that $a_{ij}(x)=a_{ji}(x) \ (1\leq i,j\leq d)$ for any $x\in {\mathbb R}^d$,  
and for some positive constants $r$, $c_{01}$ and $c_{02}$,  
$$c_{01}(1+|x|)^r|\xi|^2\leq \sum_{i,j=1}^d a_{ij}(x)\xi_i\xi_j\leq c_{02}(1+|x|)^r|\xi|^2, \quad x,\xi\in {\mathbb R}^d.$$ 
Let $({\cal E},C_0^{\infty}({\mathbb R}^d))$ be the quadratic form on $L^2({\mathbb R}^d)$ 
defined by 
\begin{equation}\label{eq:d-j}
{\cal E}(u,u)
=\int_{{\mathbb R}^d}\sum_{i,j=1}^d a_{ij}(x)\frac{\partial u}{\partial x_i}(x)\frac{\partial u}{\partial x_j}(x)\,{\rm d}x
+\iint_{{\mathbb R}^d\times{\mathbb R}^d}(u(x)-u(y))^2\,J(x, {\rm d}y){\rm d}x.
\end{equation}
Then $({\cal E},C_0^{\infty}({\mathbb R}^d))$ is closable on $L^2({\mathbb R}^d)$ and 
its closure $({\cal E},{\cal F})$ is a regular Dirichlet form on $L^2({\mathbb R}^d)$.

If the jumping measure $J(x,{\rm d}y)\,{\rm d}x$ vanishes in \eqref{eq:d-j}, 
then $({\cal E},{\cal F})$ is recurrent if and only if $r\leq 2-d$ (\cite[Examples 1 and 2]{I78}).
Hence by Corollary \ref{cor:tran-iff} and Example \ref{exam:j-d} below, 
$({\cal E},{\cal F})$ is recurrent if and only if 
$p\leq 2-d$, $q\leq \beta-d$ and $r\leq 2-d$. 

\subsubsection{Direct product}\label{subsubsect:direct}
For $i=1,2$, let $(\tilde{\cal E}^{(i)},\tilde{\cal F}^{(i)})$ be 
a regular Dirichlet form on $L^2({\mathbb R}^d)$ as $({\cal E},{\cal F})$ 
in Corollary \ref{cor:tran-iff}. 
Let $\alpha_i$, $\beta_i$ $p_i$, $q_i$ denote 
the parameters corresponding to $\alpha$, $\beta$, $p$, $q$,
respectively. 
Assume that $q_i<\beta_i$ for $i=1,2$. 
Let $\tilde{\mathbf M}^{(i)}=(\{X_t^{(i)}\}_{t\geq 0},\{P_x^{(i)}\}_{x\in {\mathbb R}^d})$ 
be a symmetric Hunt process on ${\mathbb R}^d$ generated by $(\tilde{\cal E}^{(i)},\tilde{\cal F}^{(i)})$. 
Let $\tilde{\mathbf M}=(\{X_t\}_{t\geq 0},\{P_x\}_{x\in {\mathbb R}^{2d}})$ be 
the direct product of $\tilde{\mathbf M}^{(1)}$ and $\tilde{\mathbf M}^{(2)}$ defined by 
$$X_t=(X_t^{(1)},X_t^{(2)}), \quad P_{(x_1,x_2)}=P_{x_1}^{(1)}\otimes P_{x_2}^{(2)}, \quad x_1,x_2\in {\mathbb R}^d.$$
Then by \cite[Theorem 3.1]{O97}, 
$\tilde{\mathbf M}$ is a symmetric Markov process on ${\mathbb R}^{2d}$ 
such that the associated Dirichlet form $(\tilde{\cal E},\tilde{\cal F})$ on $L^2({\mathbb R}^{2d})$ is regular. 
Moreover,  ${\cal C}=C_0^{\infty}({\mathbb R}^d)\otimes C_0^{\infty}({\mathbb R}^d)$ is a core of $({\cal E},{\cal F})$ 
and 
$$\tilde{\cal E}(u,u)
=\int_{{\mathbb R}^d}\tilde{\cal E}^{(1)}(u(\cdot, y),u(\cdot,y))\,{\rm d}y
+\int_{{\mathbb R}^d}\tilde{\cal E}^{(2)}(u(x,\cdot),u(x,\cdot))\,{\rm d}x, \quad u\in {\cal C}.$$
Here $C_0^{\infty}({\mathbb R}^d)\otimes C_0^{\infty}({\mathbb R}^d)$ 
is the linear span of functions $u^{(1)}\otimes u^{(2)}(x,y):=u^{(1)}(x)u^{(2)}(y)$ 
for $u^{(i)}\in C_0^{\infty}({\mathbb R}^d)$.

Since each $(\tilde{\cal E}^{(i)},\tilde{\cal F}^{(i)})$ is irreducible, 
so is $(\tilde{\cal E},\tilde{\cal F})$ by \cite[Theorem 5.1]{O97}. 
Furthermore, $(\tilde{\cal E},\tilde{\cal F})$ is transient 
if so is either of $(\tilde{\cal E}^{(i)},\tilde{\cal F}^{(i)})$ 
by \cite[Theorem 5.2]{O97}. 
Hence by Corollary \ref{cor:tran-iff}, $(\tilde{\cal E},\tilde{\cal F})$ is transience 
if at least one of the following inequalities holds: $p_1\vee p_2>2-d$, $q_1>\beta_1-d$, $q_2>\beta_2-d$.

\subsubsection{Feller processes}
The proofs of Theorems \ref{thm:tran-sm} and \ref{thm:tran-bg} are applicable to 
a class of Feller processes
 (see, e.g., \cite[Definitions 1.16 and 1.24]{BSW14} 
for the definition of Feller processes and Feller generators, 
and \cite[Theorem 2.21]{BSW14} for the form of the Feller generator).
Let ${\mathbf M}=(\{X_t\}_{t\geq 0}, \{P_x\}_{x\in {\mathbb R}^d})$ 
be a Feller process on ${\mathbb R}^d$ 
such that 
\begin{itemize}
\item $P_x(X_t\in B)>0$ for any $t>0$, $x\in {\mathbb R}^d$ 
and $B\in {\cal B}({\mathbb R}^d)$ with positive Lebesgue measure; 
\item the function $x\mapsto E_x[f(X_t)]$ is continuous on ${\mathbb R}^d$ 
for each $t>0$ and bounded continuous function $f$ on ${\mathbb R}^d$.
\end{itemize}
Then by \cite[Theorems 6.22--6.24]{BSW14}, 
${\mathbf M}$ is Harris recurrent or transient in the sense of \cite[Definition 6.21]{BSW14} 
(see, e.g., \cite[Theorem 6.27]{BSW14} 
for the equivalent conditions of Harris recurrence and transience).  

Let $(L,D(L))$ be the Feller generator of ${\mathbf M}$. 
We suppose that $C_0^{\infty}({\mathbb R}^d)\subset D(L)$ 
and for any $u\in C_0^{\infty}({\mathbb R}^d)$,
\begin{equation}\label{eq:feller}
\begin{split}
Lu(x)
&=\sum_{i,j=1}^da_{ij}(x)\frac{\partial^2 u}{\partial x_i \partial x_j}(x)
+\langle l(x),\nabla u(x)\rangle\\
&+\int_{{\mathbb R}^d}(u(x+z)-u(x)-\langle\nabla u(x),z\rangle{\bf 1}_{\{0<|z|<1\}})\kappa(x,z)\nu({\rm d}z).
\end{split}
\end{equation}
Here 
$(a_{ij}(x))_{i,j=1}^d$ is a symmetric and nonnegative definite matrix, 
$l(x)$ is a ${\mathbb R}^d$-valued measurable function on ${\mathbb R}^d$, 
and $\kappa(x,z)$ is a nonnegative Borel measurable function on 
${\mathbb R}^d\times {\mathbb R}^d$.  
$\nu({\rm d}z)$ is 
a positive Borel measure on ${\mathbb R}^d$ such that 
$\int_{0<|z|<1}|z|^2\nu({\rm d}z)<\infty$ and 
${\bf 1}_{\{|z|\geq 1\}}\nu({\rm d}z)={\bf 1}_{\{|z|\geq 1\}}|z|^{-(d+\alpha)}\,{\rm d}z$ 
with some $\alpha\in (0,2)$. 

We assume the next conditions on $\kappa(x,z)$: 
\begin{itemize}
\item there exist positive constants $c_1$ and $M$ 
such that for any $x\in {\mathbb R}^d$ with $|x|\geq M$,
$$|x|^2\int_{0<|z|<1}|z|^2\kappa(x,z)\,\nu({\rm d}z)
\leq c_1\int_{0<|z|<1}\langle x,z\rangle^2\kappa(x,z)\,\nu({\rm d}z);$$
\item for some $c_2\geq 0$ and $q>(\alpha-d)/2$,  
$\kappa(x,z)=c_2((1+|x|^2)^q+(1+|x+z|^2)^q)$ 
for any $x\in {\mathbb R}^d$ and $z\in {\mathbb R}^d$ with $|z|\geq 1$. 
\end{itemize}
Let 
$$A(x)=\frac{1}{|x|^2}\left[\sum_{i,j=1}^da_{ij}(x)x_ix_j
+\int_{0<|z|<1}\langle x,z\rangle^2\kappa(x,z)\,\nu({\rm d}z)\right],$$
$$B(x)=\sum_{i=1}^d a_{ii}(x)+\int_{0<|z|<1}|z|^2\kappa(x,z)\,\nu({\rm d}z), \quad C(x)=\langle l(x),x\rangle.$$
Then ${\mathbf M}$ is transient if 
$$\liminf_{|x|\rightarrow\infty}\frac{B(x)+C(x)}{A(x)}>2.$$
We omit the proof of this assertion 
because it is almost identical with those of Theorems \ref{thm:tran-sm} and \ref{thm:tran-bg} 
by using \cite[Theorem 6.27]{BSW14}. 

For instance,  if $a_{ij}(x)\equiv 0$, $\kappa(x,z)=a_1(x)+a_1(x+z)$  
for any $x\in {\mathbb R}^d$ and $z\in {\mathbb R}^d$ with $|z|<1$ and 
$$(l(x))_i=\frac{1}{2}\int_{0<|z|<1}z_i(a_1(x+z)-a_1(x-z))\,\nu(x,{\rm d}z), \quad i=1,\dots, d,$$
then $L$ has the same form as ${\cal L}^{(1)}+{\cal L}^{(2)}$. 
However, we do not know general conditions for the operator as in \eqref{eq:feller} 
being a Feller generator in terms of the functions $a_{ij}(x)$, $\kappa(x,z)$, $l(x)$ 
and the measure $\nu({\rm d}z)$.

\section{Proof of Lemma {\ref{lem:derivative}}}\label{sect:lem}
This section is devoted to the proof of Lemma {\ref{lem:derivative}}.

\subsection{Proof of Lemma {\ref{lem:derivative}} (1) and (2)}
Fix $\delta>0$. Recall that $d\geq 2$ and $q\in [(\alpha-d)/2,0)$.
\begin{proof}[Proof of Lemma {\rm \ref{lem:derivative} (1)}]
Since $0\leq 2s|x|/\langle x \rangle\leq 1$ 
for any $x\in {\mathbb R}^d$ and $s\in [0,1)$, 
there exists $c_1>0$ such that 
for any $x\in {\mathbb R}^d$, $r\geq 2$ and $s\in [0,1)$, 
\begin{equation*}
1+\left(1+r^2\pm\frac{2s|x|}{\langle x \rangle}r\right)^q
\leq c_1
\end{equation*}
and thus
\begin{equation}\label{eq:j-1}
|J_q(\delta,x,r,s)|\leq 2c_1.
\end{equation}

If $0\leq r\leq 3/2$, then for any $x\in {\mathbb R}^d$ and $s\in [0,1)$, 
\begin{equation}\label{eq:lower-rs}
1+r^2\pm \frac{2s|x|}{\langle x \rangle}r\ge (r-1)^2\vee (1-s^2)\geq (r-1)^2,
\end{equation}
which implies that 
\begin{equation}\label{eq:q-bdd}
1+\left(1+r^2\pm\frac{2s|x|}{\langle x \rangle}r\right)^q
\leq c_2[(r-1)^2\vee (1-s^2)]^q.
\end{equation}
Let $\eta$ be a negative constant such that 
\begin{equation}\label{eq:eta}
-\frac{1}{2}<\eta<q+\frac{d-1}{2}.
\end{equation}
If $1/2\leq r\leq 3/2$, then by \eqref{eq:lower-rs} and \eqref{eq:q-bdd}, 
\begin{equation}\label{eq:j-2}
|J_q(\delta,x,r,s)|\leq 2c_2\left(1+\frac{1}{|r-1|^{2\delta}}\right)|r-1|^{2\eta}(1-s^2)^{q-\eta}.
\end{equation}

Let $r\in [0,1/2]$. Then the function 
$$g_1^{(r)}(y)=\frac{1}{(1+r^2+2ry)^{\delta}}+\frac{1}{(1+r^2-2ry)^{\delta}}, \quad y\in [0,1]$$
is increasing and therefore, 
$$\frac{2}{(1+r^2)^{\delta}}\leq g_1^{(r)}(y)
\leq \frac{1}{(1+r)^{2\delta}}+\frac{1}{(1-r)^{2\delta}}, \quad y\in [0,1].$$
In particular, there exists $c_3>0$ such that for any $y\in [0,1]$ and $r\in [0,1/2]$,
$$\left|g_1^{(r)}(y)-2\right|
\leq c_3r^2.$$
Hence by \eqref{eq:lower-rs}, there exists $c_4>0$ such that 
for any $x\in {\mathbb R}^d$, $r\in [0,1/2]$ and $s\in [0,1)$, 
\begin{equation}\label{eq:bdd0-1}
\begin{split}
&\left|
\frac{1}{(1+r^2+2s|x|r/\langle x\rangle)^{\delta}}+\frac{1}{(1+r^2-2s|x|r/\langle x\rangle)^{\delta}}-2\right|
\left[1+\left(1+r^2-\frac{2s|x|}{\langle x\rangle}r\right)^q\right]
\leq c_4r^2.
\end{split}
\end{equation}
Let
$$
g_2^{(r)}(y)
=\left[1-\frac{1}{(1+r^2+2ry)^{\delta}}\right]
\left[(1+r^2-2ry)^q-(1+r^2+2ry)^q\right], \quad y\in [0,1].
$$
Since $g_2^{(r)}$ is a increasing, there exists $c_5>0$ such that for any $y\in [0,1]$ and $r\in [0,1/2]$, 
$$0\leq g_2^{(r)}(y)\leq c_5r^2.$$
This implies that for any $x\in {\mathbb R}^d$, $r\in [0,1/2]$ and $s\in [0,1)$, 
\begin{equation*}
\begin{split}
&\left[1-\frac{1}{(1+r^2+2s|x|r/{\langle x \rangle})^{\delta}}\right]
\left[\left(1+r^2-\frac{2s|x|}{{\langle x \rangle}}r\right)^q
-\left(1+r^2+\frac{2s|x|}{{\langle x \rangle}}r\right)^q
\right]
\leq c_5r^2.
\end{split}
\end{equation*}
By combining this with \eqref{eq:bdd0-1}, 
there exists $c_6>0$ such that for any $x\in {\mathbb R}^d$, $r\in [0,1/2]$ and $s\in [0,1)$, 
\begin{equation}\label{eq:j-3}
|J_q(\delta,x,r,s)|\leq c_6 r^2.
\end{equation}

Let $\delta_0=\eta+1/2\in (0,1/2)$. 
Then for any $\delta\in (0,\delta_0)$, we have $2(\eta-\delta)>-1$. 
Combining this with \eqref{eq:j-1}, \eqref{eq:j-2} and \eqref{eq:j-3}, 
we can apply the Lebesgue convergence theorem to show \eqref{eq:lim}.   
\end{proof}

\begin{proof}[Proof of Lemma {\rm \ref{lem:derivative} (2)}]
We compute the derivative of $\tilde{J}_q$ with respect to $\delta$:
\begin{equation}\label{eq:deri-j}
\begin{split}
&\frac{\partial \tilde{J}_q}{\partial \delta}(\delta, r,s)\\
&=-\frac{\log(r^2+2rs+1)}{(r^2+2rs+1)^{\delta}}[1+(r^2+2rs+1)^q]
-\frac{\log(r^2-2rs+1)}{(r^2-2rs+1)^{\delta}}[1+(r^2-2rs+1)^q].
\end{split}
\end{equation}
Since there exist $c_1>0$ and $c_2>0$ such that 
for any $r\geq 2$ and $s\in [0,1)$,
$$c_1r^2 \leq r^2\pm 2rs+1\leq c_2r^2, $$
there exists $c_3>0$, which is locally bounded in $\delta\in [0,\infty)$, 
such that 
\begin{equation}\label{eq:deri-1}
\left|\frac{\partial \tilde{J}_q}{\partial \delta}(\delta, r,s)\right|\leq c_3\frac{\log r}{r^{2\delta}}, 
\quad r\geq 2, \ s\in [0,1).
\end{equation}

There exists $c_4>0$ such that for any  $r\in [1/2,3/2]$ and $s\in [0,1)$,
\begin{equation}\label{eq:a}
\left|-\frac{\log(r^2+2rs+1)}{(r^2+2rs+1)^{\delta}}[1+(r^2+2rs+1)^q]\right|
\leq c_4.
\end{equation}
In a similar way to the proof of \eqref{eq:j-2}, 
there exists $c_5>0$ such that for any $r\in [1/2,3/2]$ and $s\in [0,1)$,
\begin{equation*}
\left|-\frac{\log(r^2-2rs+1)}{(r^2-2rs+1)^{\delta}}[1+(r^2-2rs+1)^q]\right|
\leq c_5|r-1|^{2(\eta-\delta)}\log(|r-1|^{-1})(1-s^2)^{q-\eta}.
\end{equation*}
Here $\eta$ is the same negative constant as in \eqref{eq:eta}. 
Combining this with \eqref{eq:a}, we obtain by \eqref{eq:deri-j},
\begin{equation}\label{eq:deri-2}
\left|\frac{\partial \tilde{J}_q}{\partial \delta}(\delta, r,s)\right|
\leq c_4+c_5|r-1|^{2(\eta-\delta)}\log(|r-1|^{-1})(1-s^2)^{q-\eta}, \quad r\in [1/2,3/2], \ s\in [0,1).
\end{equation}

For $r\in [0,1/2]$ and $s\in [0,1]$, 
we rewrite \eqref{eq:deri-j} as 
\begin{equation}\label{eq:deri-j-1}
\begin{split}
\frac{\partial \tilde{J}_q}{\partial \delta}(\delta, r,s)
&=-\frac{1}{(r^2+2rs+1)^{\delta}}\left[\log(r^2+2rs+1)+\log(r^2-2rs+1)\right]\\
&+\log(r^2-2rs+1)\left[\frac{1}{(r^2+2rs+1)^{\delta}}-\frac{1}{(r^2-2rs+1)^{\delta}}\right]\\
&-\frac{1}{(r^2+2rs+1)^{\delta-q}}\left[\log(r^2+2rs+1)+\log(r^2-2rs+1)\right]\\
&+\log(r^2-2rs+1)\left[\frac{1}{(r^2+2rs+1)^{\delta-q}}-\frac{1}{(r^2-2rs+1)^{\delta-q}}\right].
\end{split}
\end{equation}
Since $\log(1+x)/x\rightarrow 1$ as $x\rightarrow 0$ and   
\begin{equation*}
\log(r^2+2rs+1)+\log(r^2-2rs+1)
=2\log(1+r^2)+\log\left(1-\left(\frac{2rs}{1+r^2}\right)^2\right),
\end{equation*}
there exists $c_6>0$ such that for any $r\in [0,1/2]$ and $s\in [0,1)$,
\begin{equation}\label{eq:est-1}
|\log(r^2+2rs+1)+\log(r^2-2rs+1)|\leq c_6r^2.
\end{equation}
In the similar way, 
there exist $c_7>0$ and $c_8=c_8(\delta)>0$ such that 
for any $r\in [0,1/2]$ and $s\in [0,1)$,
\begin{equation}\label{eq:est-2}
\log(r^2+2rs+1)\leq c_7r
\end{equation}
and 
\begin{equation}\label{eq:est-3}
\left|\frac{1}{(r^2+2rs+1)^{\delta}}-\frac{1}{(r^2-2rs+1)^{\delta}}\right|
\leq c_8 r.
\end{equation}
Note that $c_8$ is locally bounded in $\delta\in [0,\infty)$ 
and the inequality above remains valid by replacing $\delta$ with $\delta-q$. 
Hence by \eqref{eq:deri-j-1}, \eqref{eq:est-1}, \eqref{eq:est-2} and \eqref{eq:est-3}, 
there exists $c_9=c_9(\delta)>0$,  which is locally bounded in $\delta\in [0,\infty)$, 
such that 
\begin{equation}\label{eq:deri-3}
\left|\frac{\partial \tilde{J}_q}{\partial \delta}(\delta, r,s)\right|
\leq c_9r^2, \quad r\in [0,1/2], \ s\in [0,1).
\end{equation}

Recall that $\delta_0=\eta+1/2\in (0,1/2)$. 
Then by \eqref{eq:eta}, $2(\eta-\delta)>-1$ for any $\delta\in [0,\delta_0)$ 
and $q-\eta+(d-3)/2>-1$. 
Therefore, by \eqref{eq:deri-1}, \eqref{eq:deri-2} and \eqref{eq:deri-3}, 
we apply the Lebesgue convergence theorem to get 
$$
\Lambda_q'(\delta)=\int_0^{\infty}
\left(\int_0^1 \frac{\partial \tilde{J}_q}{\partial \delta}(\delta, r,s)(1-s^2)^{(d-3)/2}\,{\rm d}s\right)
\frac{{\rm d}r}{r^{1+\alpha}}, \quad \delta\in [0,\delta_0).
$$
Since it follows by \eqref{eq:deri-j} that 
$$
\int_0^1 \frac{\partial \tilde{J}_q}{\partial \delta}(\delta, r,s)(1-s^2)^{(d-3)/2}\,{\rm d}s
=-\int_{-1}^1 K_q(\delta,r,s)(1-s^2)^{(d-3)/2}\,{\rm d}s,
$$
we arrive at \eqref{eq:lim}.
\end{proof}

\subsection{Proof of Lemma {\ref{lem:derivative}} (3)}
Let $\Gamma$ be the gamma function and 
$\psi=\Gamma'/\Gamma$ defined on ${\mathbb R}\setminus\{0,-1,-2,\dots\}$. 
Then by the relation $\Gamma(x+1)=x\Gamma(x)$, 
we have for any $n=1,2,3,\dots$,
\begin{equation}\label{eq:psi}
\psi(x+n)-\psi(x)
=\sum_{k=1}^n\frac{1}{x+k-1}, \quad x\in {\mathbb R}\setminus\{0,-1,-2,\dots\}.
\end{equation}
Let $B(x,y)$ be the beta function defined by 
$$B(x,y)=\frac{\Gamma(x)\Gamma(y)}{\Gamma(x+y)}.$$
Then 
$$B(x,y)=\int_0^1 t^{x-1}(1-t)^{y-1}\,{\rm d}t, \quad x>0, \ y>0.$$

\begin{proof}[Proof of Lemma \rm{\ref{lem:derivative} (3)}]
Let $q\in [(\alpha-d)/2,0)$. Then by the assertion (2), 
\begin{equation}\label{eq:lam-deri-0}
\begin{split}
-\Lambda'(0)
&=\int_0^{\infty}\left(\int_{-1}^1 \log(r^2+2rs+1)(1-s^2)^{(d-3)/2}\,{\rm d}s\right)\frac{{\rm d}r}{r^{\alpha+1}}\\
&+\int_0^{\infty}\left(\int_{-1}^1(r^2+2rs+1)^{(\alpha-d)/2}
\log(r^2+2rs+1)(1-s^2)^{(d-3)/2}\,{\rm d}s\right)\,\frac{{\rm d}r}{r^{\alpha+1}}\\
&=I_1+I_2.
\end{split}
\end{equation}

We first compute $I_1$. 
By the Taylor expansion,
\begin{equation}\label{eq:series}
\begin{split}
\log(r^2+2rs+1)
&=\log(1+r^2)+\log\left(1+\frac{2rs}{1+r^2}\right)\\
&=\log(1+r^2)+\sum_{n=1}^{\infty}\frac{(-1)^{n-1}}{n}\left(\frac{2rs}{1+r^2}\right)^n.
\end{split}
\end{equation}
Since 
$$\int_{-1}^1 s^{2n+1}(1-s^2)^{(d-3)/2}\,{\rm d}s=0$$
and 
$$\int_{-1}^1 s^{2n}(1-s^2)^{(d-3)/2}\,{\rm d}s
=2\int_0^1s^{2n}(1-s^2)^{(d-3)/2}\,{\rm d}s
=B\left(n+\frac{1}{2}, \frac{d-1}{2}\right),$$
we have by \eqref{eq:series},
\begin{equation}\label{eq:int-exp}
\begin{split}
&\int_{-1}^1 \log(r^2+2rs+1)(1-s^2)^{(d-3)/2}\,{\rm d}s\\
&=B\left(\frac{1}{2}, \frac{d-1}{2}\right)\log(1+r^2)
-\sum_{n=1}^{\infty}\frac{2^{2n}}{2n}B\left(n+\frac{1}{2}, \frac{d-1}{2}\right)\left(\frac{r}{1+r^2}\right)^{2n}.
\end{split}
\end{equation}

By the integration by parts and change of variables formulae,
$$\int_0^{\infty}\frac{\log(1+r^2)}{r^{\alpha+1}}\,{\rm d}r
=\frac{1}{\alpha}B\left(\frac{\alpha}{2},1-\frac{\alpha}{2}\right)$$
and 
$$\int_0^{\infty}\frac{1}{r^{\alpha+1}}\left(\frac{r}{1+r^2}\right)^{2n}\,{\rm d}r
=\frac{1}{2}B\left(n+\frac{\alpha}{2},n-\frac{\alpha}{2}\right) \quad (n=1,2,3,\dots).
$$
Hence by \eqref{eq:int-exp}, 
\begin{equation}\label{eq:int-series}
I_1=\frac{1}{\alpha}B\left(\frac{1}{2}, \frac{d-1}{2}\right)B\left(\frac{\alpha}{2}, 1-\frac{\alpha}{2}\right)
-\sum_{n=1}^{\infty}\frac{2^{2n}}{4n}
B\left(n+\frac{1}{2}, \frac{d-1}{2}\right)B\left(n+\frac{\alpha}{2}, n-\frac{\alpha}{2}\right).
\end{equation}

For $\lambda\in {\mathbb R}$, 
let $(\lambda)_0=1$ and 
$(\lambda)_n=\lambda(\lambda+1)\cdots(\lambda+n-1) \ (n\in {\mathbb N})$. 
Since
$\Gamma(n+\lambda)=(\lambda)_n\Gamma(\lambda)$, we have 
\begin{equation*}
\begin{split}
&B\left(n+\frac{1}{2},\frac{d-1}{2}\right)B\left(n+\frac{\alpha}{2},n-\frac{\alpha}{2}\right)\\
&=\frac{\Gamma(n+1/2)\Gamma((d-1)/2)}{\Gamma(n+d/2)}
\frac{\Gamma(n+\alpha/2)\Gamma(n-\alpha/2)}{\Gamma(2n)}\\
&=-\frac{2}{\alpha}\frac{\Gamma(1/2)\Gamma((d-1)/2)}{\Gamma(d/2)}
\frac{(1/2)_n}{(d/2)_n}
\frac{\Gamma(\alpha/2)\Gamma(1-\alpha/2)}{\Gamma(2n)}({\alpha/2})_n({-\alpha/2})_n\\
&=-\frac{2}{\alpha}B\left(\frac{1}{2}, \frac{d-1}{2}\right)B\left(\frac{\alpha}{2}, 1-\frac{\alpha}{2}\right)
\frac{(2n-1)!!}{2^{n}(2n-1)!}\frac{({\alpha/2})_n({-\alpha/2})_n}{(d/2)_n}
\end{split}
\end{equation*}
so that
\begin{equation}\label{eq:beta}
\begin{split}
&\frac{2^{2n}}{n}B\left(n+\frac{1}{2},\frac{d-1}{2}\right)B\left(n+\frac{\alpha}{2},n-\frac{\alpha}{2}\right)\\
&=-\frac{4}{\alpha}B\left(\frac{1}{2}, \frac{d-1}{2}\right)B\left(\frac{\alpha}{2}, 1-\frac{\alpha}{2}\right)
\frac{1}{n!}\frac{({\alpha/2})_n({-\alpha/2})_n}{(d/2)_n}.
\end{split}
\end{equation}
Since we know by \cite[p.174, (7.4.14)]{H75} that 
$$\sum_{n=0}^{\infty}\frac{1}{n!}\frac{({\alpha/2})_n({-\alpha/2})_n}{(d/2)_n}
=\frac{\Gamma(d/2)^2}{\Gamma((d+\alpha)/2)\Gamma((d-\alpha)/2)},$$
it follows by \eqref{eq:int-series} and \eqref{eq:beta} that 
\begin{equation}\label{eq:exp-1}
\begin{split}
I_1
&=\frac{1}{\alpha}B\left(\frac{1}{2},\frac{d-1}{2}\right)B\left(\frac{\alpha}{2},1-\frac{\alpha}{2}\right)
\sum_{n=0}^{\infty}\frac{1}{n!}\frac{({\alpha/2})_n({-\alpha/2})_n}{(d/2)_n}\\
&=-
\frac{\Gamma(1/2)\Gamma(d/2)\Gamma((d-1)/2)\Gamma(1+\alpha/2)\Gamma(-\alpha/2)}
{\alpha\Gamma((d+\alpha)/2)\Gamma((d-\alpha)/2)}.
\end{split}
\end{equation}

We next compute $I_2$.
By the Taylor expansion again,
\begin{equation*}
\begin{split}
&(r^2+2rs+1)^q\log(r^2+2rs+1)\\
&=(1+r^2)^q\left[\sum_{n=0}^{\infty}\binom{q}{n}\left(\frac{2rs}{1+r^2}\right)^n\right]
\left[\log(1+r^2)+\sum_{n=1}^{\infty}\frac{(-1)^{n-1}}{n}\left(\frac{2rs}{1+r^2}\right)^n\right]\\
&=(1+r^2)^q\log(1+r^2) \sum_{n=0}^{\infty}\binom{q}{n}\left(\frac{2r}{1+r^2}\right)^ns^n\\
&\qquad -(1+r^2)^q\sum_{n=1}^{\infty}
\left[\sum_{k=0}^{n-1}\binom{q}{k}\frac{(-1)^{n-k}}{n-k}\right]\left(\frac{2r}{1+r^2}\right)^n s^n.
\end{split}
\end{equation*}
Then
\begin{equation}\label{eq:pro-1}
\begin{split}
&\int_{-1}^1(r^2+2rs+1)^q\log(r^2+2rs+1)(1-s^2)^{(d-3)/2}\,{\rm d}s\\
&=(1+r^2)^q\log(1+r^2) \sum_{n=0}^{\infty}\binom{q}{2n}B\left(n+\frac{1}{2},\frac{d-1}{2}\right)
\left(\frac{2r}{1+r^2}\right)^{2n}\\
&\qquad -(1+r^2)^q\sum_{n=1}^{\infty}
\left[\sum_{k=0}^{2n-1}\binom{q}{k}\frac{(-1)^k}{2n-k}\right]B\left(n+\frac{1}{2},\frac{d-1}{2}\right)
\left(\frac{2r}{1+r^2}\right)^{2n}.
\end{split}
\end{equation}
Since
$$\int_0^{\infty}(1+r^2)^q\left(\frac{r}{1+r^2}\right)^{2n}\frac{{\rm d}r}{r^{\alpha+1}}
=\frac{1}{2}B\left(n-\frac{\alpha}{2}, n+\frac{\alpha}{2}-q\right), \quad n=1,2,3,\dots$$
and 
\begin{equation*}
\begin{split}
&\int_0^{\infty}(1+r^2)^q\log(1+r^2)\left(\frac{r}{1+r^2}\right)^{2n}\frac{{\rm d}r}{r^{\alpha+1}}\\
&=\frac{1}{2}
\left(\psi\left(n+\frac{\alpha}{2}-q\right)-\psi(2n-q)\right)
B\left(n-\frac{\alpha}{2}, n+\frac{\alpha}{2}-q\right), \quad n=0,1,2,\dots
\end{split}
\end{equation*}
by using \cite[p.39, (4.41)]{O74},
we obtain by \eqref{eq:pro-1},
\begin{equation}\label{eq:int-psi}
\begin{split}
&\int_0^{\infty}\left(\int_{-1}^1(r^2+2rs+1)^q\log(r^2+2rs+1)(1-s^2)^{(d-3)/2}\,{\rm d}s\right)\,\frac{{\rm d}r}{r^{\alpha+1}}\\
&=\frac{1}{2}\sum_{n=0}^{\infty}\binom{q}{2n}2^{2n}B\left(n+\frac{1}{2},\frac{d-1}{2}\right)
B\left(n-\frac{\alpha}{2},n+\frac{\alpha}{2}-q\right)
\left[\psi(2n-q)-\psi\left(n+\frac{\alpha}{2}-q\right)\right]\\
&\qquad -\frac{1}{2}\sum_{n=1}^{\infty}
\left[\sum_{k=0}^{2n-1}\binom{q}{k}\frac{(-1)^k}{2n-k}\right]
2^{2n}B\left(n+\frac{1}{2},\frac{d-1}{2}\right)
B\left(n-\frac{\alpha}{2},n+\frac{\alpha}{2}-q\right).
\end{split}
\end{equation}

If $a$ is not an integer and $b-a\ne -1$, then we have by induction, 
$$\sum_{k=0}^n\frac{(b)_k}{(a)_k}=\frac{1}{b-a+1}\left(1-a+\frac{b\cdot(b+1)_n}{(a)_n}\right), 
\quad n=0,1,2,\dots.$$
Using this equality, we also see by induction and \eqref{eq:psi} that 
if $a$ is not a negative integer, then 
$$\sum_{k=1}^n\frac{(-n)_k}{k(a-n+1)_k}=-\sum_{k=1}^n\frac{1}{a-n+k}
=\sum_{l=1}^n\frac{1}{-a+l-1}=\psi(n-a)-\psi(-a).$$
Since this and \eqref{eq:psi} yield 
\begin{equation*}
\begin{split}
\sum_{k=0}^{n-1}\binom{a}{k}\frac{(-1)^k}{n-k}
=(-1)^n\binom{a}{n}\sum_{k=1}^n\frac{(-n)_k}{k(a-n+1)_{k}}
&=(-1)^n\binom{a}{n}(\psi(n-a)-\psi(-a)),
\end{split}
\end{equation*}
the last expression of \eqref{eq:int-psi} is equal to 
\begin{equation}\label{eq:q-psi}
\frac{1}{2}\sum_{n=0}^{\infty}\binom{q}{2n}2^{2n}
B\left(n+\frac{1}{2},\frac{d-1}{2}\right)
B\left(n-\frac{\alpha}{2},n+\frac{\alpha}{2}-q\right)
\left[\psi(-q)-\psi\left(n+\frac{\alpha}{2}-q\right)\right].
\end{equation}

Note that 
$$\binom{(\alpha-d)/2}{2n}2^{2n}
B\left(n+\frac{1}{2},\frac{d-1}{2}\right)
B\left(n-\frac{\alpha}{2},n+\frac{d}{2}\right)
=\frac{\Gamma(1/2)\Gamma((d-1)/2)\Gamma(n-\alpha/2)}{n!\Gamma((d-\alpha)/2)}.$$
We also see that 
$$\sum_{n=0}^{\infty}\frac{\Gamma(n-\alpha/2)}{n!}
=\Gamma\left(-\frac{\alpha}{2}\right)\sum_{n=0}^{\infty}\binom{\alpha/2}{n}(-1)^n=0,$$
and by \cite[p.361 (55.4.1) and p.151 (7.1.2)]{H75},
\begin{equation*}
\begin{split}
\sum_{n=0}^{\infty}\frac{\Gamma(n-\alpha/2)}{n!}\psi\left(n+\frac{d}{2}\right)
&=\Gamma\left(-\frac{\alpha}{2}\right)\sum_{n=0}^{\infty}\frac{(-\alpha/2)_n}{n!}\psi\left(n+\frac{d}{2}\right)\\
&=-\frac{\Gamma(-\alpha/2)\Gamma(1+\alpha/2)\Gamma(d/2)}{\Gamma(1+(d+\alpha)/2)}
\sum_{n=0}^{\infty}\frac{(d/2)_n}{(1+(d+\alpha)/2)_n}\\
&=-\frac{2\Gamma(-\alpha/2)\Gamma(1+\alpha/2)\Gamma(d/2)}{\alpha\Gamma((d+\alpha)/2)}. 
\end{split}
\end{equation*}
Hence if we take $q=(\alpha-d)/2$ in \eqref{eq:int-psi} and \eqref{eq:q-psi}, 
then by \eqref{eq:exp-1},
\begin{equation*}
\begin{split}
I_2&=-\frac{\Gamma(1/2)\Gamma((d-1)/2)}{2\Gamma((d-\alpha)/2)}
\sum_{n=0}^{\infty}\frac{\Gamma(n-\alpha/2)}{n!}
\psi\left(n+\frac{d}{2}\right)\\
&=\frac{\Gamma(1/2)\Gamma(d/2)\Gamma((d-1)/2)\Gamma(1+\alpha/2)\Gamma(-\alpha/2)}
{\alpha\Gamma((d+\alpha)/2)\Gamma((d-\alpha)/2)}=-I_1.
\end{split}
\end{equation*}
By this equality and \eqref{eq:lam-deri-0}, the proof is complete.
\end{proof}

\appendix
\section{Recurrence}\label{sect:rec}
In this appendix, we present a version of the recurrence criterion 
for non-local Dirichlet forms 
established in \cite[Theorem 4]{O95} and \cite[Theorem 2.4]{OU15}. 

\subsection{Recurrence criterion}
We first recall a necessary and sufficient condition for recurrence of regular Dirichlet forms 
in terms of the capacity (\cite{O95, O96}).
Let $X$ be a locally compact separable metric space and 
$m$ a positive Radon measure on $X$ with full support. 
Let $X_{\Delta}=X\cup\{\Delta\}$ denote the one point compactification of $X$. 
Let $({\cal E},{\cal F})$ be a regular Dirichlet form on $L^2(X;m)$. 
We say that a function $u$ on $X$ belongs locally to ${\cal F}$ 
if for any relatively compact set $G$ in $X$, 
there exists $u_G\in {\cal F}$ such that $u=u_G$, $m$-a.e.\ on $G$.
Let ${\cal F}_{{\rm loc}}$ denote the totality of such functions. 
In what follows, we impose the next assumption on $({\cal E},{\cal F})$:
\begin{assum}\label{assum:exh}\rm 
There exists a nonnegative function $\rho$ on $X$ such that 
\begin{enumerate}
\item[(i)]  $\rho\in {\cal F}_{{\rm loc}}$ and $\rho(x)\rightarrow \infty$ as $x\rightarrow\Delta$;
\item[(ii)] For any $r>0$, the set $B_{\rho}(r):=\{y\in X \mid \rho(y)<r\}$ is relatively compact in $X$.
\end{enumerate}
\end{assum}

Fix a function $\rho$ satisfying  Assumption \ref{assum:exh}. 
For $r,R>0 \ (0<r<R)$, we define 
$${\mathbf C}(r,R)=\inf\left\{{\cal E}(u,u) \mid 
u\in {\cal F}\cap C_0(X), \ \text{$u\geq 1$ on $\overline{B_{\rho}(r)}$ and  $u=0$ on $B_{\rho}(R)^c$} \right\}.$$

\begin{thm}\label{thm:capa-ineq}{\rm (\cite[Theorem 1.1]{O95} or \cite[Theorem 1]{O96})} 
Under Assumption {\rm \ref{assum:exh}}, 
$({\cal E},{\cal F})$ is recurrent if and only if 
for each fixed  $r>0$, 
$$\lim_{R\rightarrow\infty}{\mathbf C}(r,R)=0.$$ 
\end{thm}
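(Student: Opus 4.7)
The plan is to reduce the equivalence to the standard characterization of recurrence for a regular Dirichlet form (cf.\ \cite[Theorem 1.6.3]{FOT11}): $({\cal E},{\cal F})$ is recurrent if and only if there exists $\{u_n\}\subset{\cal F}$ with $0\le u_n\le 1$, $u_n\to 1$ $m$-a.e., and ${\cal E}(u_n,u_n)\to 0$; equivalently, $1\in{\cal F}_e$ with ${\cal E}(1,1)=0$ in the extended Dirichlet space. Assumption \ref{assum:exh} supplies both the compact exhaustion $\{B_\rho(n)\}_{n\ge 1}$ of $X$ and, via the function $\rho$, a source of cutoffs in ${\cal F}_{\mathrm{loc}}$ with controllable support.

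For the sufficiency (capacity condition implies recurrence), I would run a diagonal argument. For each $n$ the hypothesis $\lim_{R\to\infty}{\mathbf C}(n,R)=0$ yields $R_n>n$ and $u_n\in{\cal F}\cap C_0(X)$ with $u_n\ge 1$ on $\overline{B_\rho(n)}$, $u_n=0$ on $B_\rho(R_n)^c$, and ${\cal E}(u_n,u_n)<1/n$. Replacing $u_n$ by $(0\vee u_n)\wedge 1$ preserves these properties and, by Markovianity of $({\cal E},{\cal F})$, does not increase the energy; in particular $u_n=1$ on $B_\rho(n)$. Since $B_\rho(n)\uparrow X$ we have $u_n\to 1$ $m$-a.e., so the characterization above yields recurrence.

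For the necessity (recurrence implies the capacity condition), I would interpret ${\mathbf C}(r,R)$ as the $0$-order capacity of $\overline{B_\rho(r)}$ truncated to $B_\rho(R)$, note that it is monotone non-increasing in $R$, and identify its limit with the extended $0$-order capacity
\[
\mathrm{Cap}_0\bigl(\overline{B_\rho(r)}\bigr):=\inf\bigl\{{\cal E}(u,u):u\in{\cal F}_e,\ u\ge 1\text{ q.e.\ on }\overline{B_\rho(r)}\bigr\}.
\]
Recurrence provides $1\in{\cal F}_e$ with ${\cal E}(1,1)=0$, so $u\equiv 1$ is admissible and $\mathrm{Cap}_0(\overline{B_\rho(r)})=0$, whence $\lim_{R\to\infty}{\mathbf C}(r,R)=0$.

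The main obstacle is the density step $\lim_{R\to\infty}{\mathbf C}(r,R)\le\mathrm{Cap}_0(\overline{B_\rho(r)})$: given $u\in{\cal F}_e$ admissible for $\mathrm{Cap}_0$ with ${\cal E}(u,u)<\mathrm{Cap}_0(\overline{B_\rho(r)})+\varepsilon$, one must construct $w\in{\cal F}\cap C_0(X)$ that still dominates $1$ on $\overline{B_\rho(r)}$, vanishes off $B_\rho(R)$ for some finite $R$, and has energy within $2\varepsilon$ of $\mathrm{Cap}_0$. Approximating $u$ in ${\cal E}$-norm by ${\cal F}$-functions converging $m$-a.e.\ (from the definition of ${\cal F}_e$), Markov truncation to $[0,1]$, and multiplication by cutoffs $\psi_R\in{\cal F}\cap C_0(X)$ built from $\rho$ are the available tools; the delicate point is that simultaneously enforcing the pointwise lower bound on $\overline{B_\rho(r)}$, compact support in $B_\rho(R)$, and only a small energy increment requires a Beurling--Deny-type product estimate of the form ${\cal E}(fg,fg)\le C(\|f\|_\infty^2{\cal E}(g,g)+\|g\|_\infty^2{\cal E}(f,f))$ together with a choice of $\psi_R$ tailored to the exhaustion function $\rho$ of Assumption \ref{assum:exh} so that $\mathcal{E}(u\psi_R-u,u\psi_R-u)\to 0$ as $R\to\infty$.
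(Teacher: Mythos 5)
This theorem is cited by the paper from \cite{O95,O96} and is not proved in the text, so there is no in-paper proof to compare against. Evaluating your attempt on its own merits: the sufficiency direction (capacity condition $\Rightarrow$ recurrence) is correct and essentially the standard diagonal argument feeding into the FOT characterization of recurrence; the Markov truncation, the observation that $B_\rho(n)\uparrow X$ up to an $m$-null set, and the conclusion are all sound.

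The necessity direction, however, contains a genuine gap, which you yourself flag but do not close, and the tool you reach for would not close it as stated. The candidate $w=u\psi_R$ together with the product estimate ${\cal E}(fg,fg)\le C(\|f\|_\infty^2{\cal E}(g,g)+\|g\|_\infty^2{\cal E}(f,f))$ gives ${\cal E}(u(\psi_R-1),u(\psi_R-1))\le C(\|u\|_\infty^2{\cal E}(\psi_R,\psi_R)+\|\psi_R-1\|_\infty^2{\cal E}(u,u))$, and the first summand requires ${\cal E}(\psi_R,\psi_R)\to 0$ for cutoffs $\psi_R$ that are $1$ on $\overline{B_\rho(r)}$ and $0$ off $B_\rho(R)$; but the existence of such $\psi_R$ with vanishing energy is exactly the statement $\lim_R{\mathbf C}(r,R)=0$ you are trying to prove, so the argument as sketched is circular. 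The structural issue is that you try to propagate smallness through a \emph{product} $u\psi_R$, which forces you to control the energy of the cutoff itself; the pointwise lower bound on $\overline{B_\rho(r)}$ is not the hard part once a suitable $\psi_R$ exists.

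A route that does close uses the lattice (not the product) operations and the submodularity of Dirichlet forms. By the FOT characterization of recurrence (and regularity plus Markov truncation and a diagonal argument) one gets $\{v_n\}\subset{\cal F}\cap C_0(X)$ with $0\le v_n\le 1$, $v_n\to 1$ $m$-a.e.\ and ${\cal E}(v_n,v_n)\to 0$. Fix $\phi\in{\cal F}\cap C_0(X)$ with $\phi=1$ on $\overline{B_\rho(r)}$, $0\le\phi\le 1$, $\mathrm{supp}\,\phi\subset B_\rho(r_1)$, and set $w_n=\phi\vee v_n\in{\cal F}\cap C_0(X)$; then $w_n\ge 1$ on $\overline{B_\rho(r)}$, $w_n$ has compact support, and
\[
{\cal E}(w_n,w_n)\le {\cal E}(\phi,\phi)+{\cal E}(v_n,v_n)-{\cal E}(\phi\wedge v_n,\phi\wedge v_n)
\]
by submodularity (\cite[Theorem 1.4.2]{FOT11}). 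Since $\phi\wedge v_n$ is ${\cal E}_1$-bounded and converges to $\phi$ in $L^2$, it converges weakly in $({\cal F},{\cal E}_1)$ to $\phi$; combined with $\limsup_n{\cal E}(\phi\wedge v_n,\phi\wedge v_n)\le{\cal E}(\phi,\phi)$ from the same submodularity inequality, one obtains ${\cal E}(\phi\wedge v_n,\phi\wedge v_n)\to{\cal E}(\phi,\phi)$, whence ${\cal E}(w_n,w_n)\to 0$, giving $\lim_{R\to\infty}{\mathbf C}(r,R)=0$. This max/min structure and the weak-to-strong convergence step are the ingredients missing from your outline; the Beurling--Deny product estimate is not the right tool here.
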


Using this theorem, we show a recurrence criterion for regular Dirichlet forms. 
Let ${\rm diag}=\{(x,y)\in X\times X \mid x=y\}$. 
We now suppose that $({\cal E},{\cal F})$ has no killing measure 
in the Beurling-Deny representation 
(\cite[Theorem 3.2.1 and Lemma 4.5.4]{FOT11}); that is, 
there exist a symmetric form $({\cal E}^{(c)},{\cal F}\cap C_0(X))$ 
with the strongly local property (see \cite[p.120]{FOT11} for definition) and  
a symmetric positive Radon measure $J({\rm d}x\,{\rm d}y)$ on $X\times X\setminus {\rm diag}$ such that 
$${\cal E}(u,u)={\cal E}^{(c)}(u,u)
+\iint_{X\times X\setminus {\rm diag}}(u(x)-u(y))^2\,J({\rm d}x\,{\rm d}y), \quad u\in {\cal F}\cap C_0(X).$$
We can then extend ${\cal E}^{(c)}$ uniquely to ${\cal F}$.  
Moreover, for each $u\in {\cal F}$, 
there exists a positive Radon measure $\mu_{\langle u\rangle}^{c}$ on $X$ 
such that ${\cal E}^{(c)}(u,u)=\mu_{\langle u \rangle}^{c}(X)/2$ (\cite[p.123]{FOT11}).

Let $B(r)=B_{\rho}(r)$. 
We then define 
$$M_0(r)=\mu_{\langle \rho\rangle}^c(B(r)), \quad 
M_1(r)=\iint_{B(r)\times B(r)}(\rho(x)-\rho(y))^2\,J({\rm d}x\,{\rm d}y), \quad r>0$$
and 
$$M_2(r,R)=\iint_{B(r)\times B(R)^c}\log\left(\frac{\rho(y)}{R}\right)\,J({\rm d}x\,{\rm d}y), 
\quad 0<r<R<\infty.$$
\begin{thm}\label{thm:test-r}
Assume that there exist $c_0>1$, $r_0>1$, and 
a positive continuous and  nondecreasing function $L$ on $[r_0,\infty)$, 
which is slowly varying at infinity, 
such that for any $r\geq r_0$,
\begin{equation}\label{eq:upper-varying}
r^{-2}(M_0(r)+M_1(r))+M_2(r,c_0r)\leq L(r).
\end{equation}
Then there exists $K>0$ such that for any $r,R>0$ with $r_0\leq r<2c_0r\leq R$,
\begin{equation}\label{eq:capa-u}
{\mathbf C}(r,R)\leq K\left(\int_r^R\frac{{\rm d}s}{sL(s)}\right)^{-1}.
\end{equation}
In particular, $({\cal E},{\cal F})$ is recurrent if 
\begin{equation}\label{eq:test}
\int_{r_0}^{\infty}\frac{{\rm d}s}{sL(s)}=\infty.
\end{equation}
\end{thm}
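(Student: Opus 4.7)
The plan is to construct an explicit admissible function $u$ for the variational problem ${\mathbf C}(r,R)$ and bound ${\cal E}(u,u)\le K/I(r,R)$ with $I(r,R):=\int_r^R(sL(s))^{-1}\,{\rm d}s$. Inequality \eqref{eq:capa-u} will then follow, and combined with Theorem~\ref{thm:capa-ineq} and the hypothesis $\int_{r_0}^\infty(sL(s))^{-1}\,{\rm d}s=\infty$, it gives ${\mathbf C}(r,R)\to 0$ as $R\to\infty$, proving recurrence. As test function I would take $u(x)=\varphi(\rho(x))$, where $\varphi\colon[0,\infty)\to[0,1]$ is the Lipschitz cut-off defined by $\varphi(t)=1$ for $t\le r$, $\varphi(t)=I(t,R)/I(r,R)$ for $r<t<R$, and $\varphi(t)=0$ for $t\ge R$, so that $|\varphi'(t)|=(tL(t)I(r,R))^{-1}$ on $(r,R)$. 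Because $\rho\in{\cal F}_{{\rm loc}}$, $\overline{B_{\rho}(R)}$ is compact, and $\varphi$ is bounded Lipschitz with support in $[0,R]$, standard chain-rule/regularity arguments place $u$ in ${\cal F}\cap C_0(X)$; by construction $u\equiv 1$ on $\overline{B_{\rho}(r)}$ and $u\equiv 0$ outside $B_{\rho}(R)$, so $u$ is admissible for ${\mathbf C}(r,R)$.

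I would then decompose ${\cal E}(u,u)$ via the Beurling--Deny formula into the strongly local part ${\cal E}^{(c)}(u,u)$ and the jump part, and estimate each on the $c_0$-adic shells $A_k:=B_{\rho}(c_0^k r)\setminus B_{\rho}(c_0^{k-1}r)$ for $k=1,\dots,N$ with $c_0^{N-1}r\le R<c_0^N r$. For the local part, the chain rule gives ${\rm d}\mu^c_{\langle u\rangle}=\varphi'(\rho)^2\,{\rm d}\mu^c_{\langle\rho\rangle}$; on $A_k$ the pointwise bound $|\varphi'(\rho)|^2\le(c_0^{k-1}r\,L(c_0^{k-1}r)\,I(r,R))^{-2}$, combined with $\mu^c_{\langle\rho\rangle}(A_k)\le M_0(c_0^k r)\le(c_0^k r)^2L(c_0^k r)$, the slow variation of $L$, and the elementary inequality $\int_{c_0^{k-1}r}^{c_0^k r}(sL(s))^{-1}\,{\rm d}s\ge(\log c_0)/L(c_0^k r)$, yields ${\cal E}^{(c)}(u,u)\le C/I(r,R)$. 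Short-range jump cross-terms over neighbouring shells (annular pairs with $|k-\ell|\le 1$) are handled identically through the mean-value bound $|u(x)-u(y)|^2\le\sup|\varphi'|^2(\rho(x)-\rho(y))^2$ and the $M_1$-bound. For the long-range jumps (remaining cross-shell contributions and jumps from $B_{\rho}(R)$ to $B_{\rho}(R)^c$), the bound $M_2(c_0^k r,c_0^{k+1}r)\le L(c_0^k r)$ is converted via a Chebyshev-type argument to $\iint_{B_{\rho}(c_0^k r)\times B_{\rho}(c_0^{k+2}r)^c}J\le L(c_0^k r)/\log c_0$, and combined with $|u(x)-u(y)|\le\varphi(\rho(x)\wedge\rho(y))$ via a discrete Abel summation exploiting the precise definition of $\varphi$ in terms of $I(\cdot,R)$.

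The main technical hurdle is this last long-range step: one must verify that summing the $M_2$-derived tail bounds against $\varphi^2$ over all distant shell pairs produces a total of order $1/I(r,R)$ rather than something divergent (for instance of order $N/I(r,R)$). This requires a careful interplay between the log-weight in $M_2$ (without which the tail sums would not be summable), the precise $I$-based choice of $\varphi$ (so that the Abel-summed differences $\varphi(c_0^{k-1}r)^2-\varphi(c_0^{k}r)^2$ are matched against $L(c_0^k r)$), and the slow variation/monotonicity of $L$ to compare sums $\sum_j L(c_0^j r)^{-1}$ with $I(r,R)$. A secondary technicality is the verification $u\in{\cal F}\cap C_0(X)$, which is routine given regularity of the form, $\rho\in{\cal F}_{{\rm loc}}$, and compactness of $\overline{B_{\rho}(R)}$.
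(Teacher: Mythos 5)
Your test function $u=\varphi(\rho(\cdot))$ with $\varphi(t)=I(t,R)/I(r,R)$ is morally the same profile as what the paper uses (there, $u_N=\sum_{n=1}^N p_n\varphi_n$ with $p_n\propto 1/L(s_{n-1})$ reconstructs this profile on a dyadic grid), and your treatment of the local part and of the nearest-neighbour jumps via $M_0,M_1$ and slow variation is fine. The gap is exactly where you flag it, and the mechanism you propose to close it does not work.

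The ``Chebyshev-type'' conversion you describe --- using the threshold $\log(\rho(y)/(c_0 s))\ge\log c_0$ on $B_\rho(c_0^2 s)^c$ to get $J(B_\rho(s)\times B_\rho(c_0^2 s)^c)\le M_2(s,c_0 s)/\log c_0\le L(s)/\log c_0$ --- discards precisely the information in the log-weight that you need. That bound controls only \emph{one} cross-shell contribution; if you also need $J(B_\rho(s)\times B_\rho(c_0^{j}s)^c)$ for every $j\ge 2$ and you Chebyshev each one, you get $\lesssim L(s)/(j\log c_0)$, and summing over $j$ costs an extra $\log N$. Concretely, take $L\equiv 1$ (so $I(r,R)=\log(R/r)$, $N\approx\log(R/r)$, and $\varphi(s_{k-1})\approx(N-k)/N$); your intermediate quantity $\sum_k\varphi(s_{k-1})^2\,J(B_\rho(s_k)\times B_\rho(s_{k+2})^c)$ is then of order $\sum_k (N-k)^2/N^2\approx N$, i.e.\ it \emph{grows} like $I(r,R)$ instead of decaying like $1/I(r,R)$. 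Whatever Abel summation you do afterwards cannot fix a bound that is already the wrong size.

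What actually makes the long-range part close is the Fubini identity underlying the definition of $M_2$: since $\lambda\mapsto J(B_\rho(s)\times B_\rho(\lambda s)^c)$ is nonincreasing,
\[
\sum_{l>n} J\bigl(B_\rho(s_n)\times B_\rho(s_l)^c\bigr)
\;\lesssim\;\int_{c_0}^{\infty} J\bigl(B_\rho(s_n)\times B_\rho(ts_n)^c\bigr)\,\frac{{\rm d}t}{t}
\;=\;M_2(s_n,c_0 s_n)\;\le\;L(s_n),
\]
so the \emph{entire} long-range tail from a single shell is controlled by a single $L(s_n)$, with no $\log N$ loss. This is the heart of the paper's argument and is what you are missing; without it you cannot get \eqref{eq:capa-u}. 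Once you have this bound, the paper's discrete ansatz $u_N=\sum p_n\varphi_n$ is the cleanest way to cash it in: it yields ${\cal E}(\varphi_n,\varphi_n)+2\sum_{l>n}|{\cal E}(\varphi_n,\varphi_l)|\lesssim L(s_{n-1})$, and monotonicity of $L$ makes $p_l\le p_n$, giving ${\cal E}(u_N,u_N)\le\sum_n p_n^2 L(s_{n-1})=C\lesssim 1/I(r,R)$ directly. If you insist on your continuous profile, you would still need the displayed Fubini estimate (expanding $(\varphi(s_{k-1})-\varphi(s_l))^2=(\sum_{j=k}^l\Delta_j)^2$ and using that $\Delta_j$ is nonincreasing because $L$ is nondecreasing); the ``Chebyshev plus Abel'' route as stated does not substitute for it.
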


Theorem \ref{thm:test-r} is a version of \cite[Theorem 4]{O95}  
and \cite[Theorem 2.4]{OU15};
these theorems require an upper bound of the form 
$J(B(r)\times B(R)^c)\leq F(r)G(R)$ for any large $r,R\geq 1$ 
satisfying  $R/r\geq c_0$ with some $c_0>1$. 
Theorem \ref{thm:test-r} says that 
we can replace the upper bound condition of $J(B(r)\times B(R)^c)$ 
by that of $M_2(r,c_0r)$.

We now show Theorem \ref{thm:test-r} by following the proof of \cite[Theorem 4]{O96}. 

\begin{proof}[Proof of Theorem {\rm \ref{thm:test-r}}]
Let $r_0>1$  and $c_0>1$ be the same constants as in the statement of the theorem. 
Fix $r,R>0$ with $r_0\leq r<2c_0r\leq R$. 
Then there exists a positive integer $N$ such that  $(2c_0)^N\leq R/r<(2c_0)^{N+1}$, 
which yields $R/r=c^N$ for some $c\in [2c_0,(2c_0)^2)$.

For any constants $s>0$, $\kappa>1$ and positive integer $n$, 
$$J(B(s)\times B(\kappa^n s)^c)
\leq \frac{\kappa}{\kappa-1}
\int_{\kappa^{n-1}}^{\kappa^n} J(B(s)\times B(ts)^c)\frac{{\rm d}t}{t}.$$
Hence if we let $s_n=c^nr \ (n=0,\dots, N)$, then  
\begin{equation*}
\begin{split}
&\sum_{l=n+2}^{N-1}J(B(s_n)\times B(s_l)^c)
=\sum_{l=n+2}^{N-1}J(B(s_n)\times B(c^{l-n}s_n)^c)\\
&\leq \frac{c}{c-1}\sum_{l=n+2}^{N-1}\int_{c^{l-n-1}}^{c^{l-n}}J(B(s_n)\times B(ts_n)^c)\frac{{\rm d}t}{t}
\leq 2\int_c^{\infty}J(B(s_n)\times B(ts_n)^c)\frac{{\rm d}t}{t}.
\end{split}
\end{equation*}
In the same way, we also have 
$$J(B(s_n)\times B(cs_n)^c)
\leq \frac{c}{c-c_0}\int_{c_0}^cJ(B(s_n)\times B(ts_n)^c)\frac{{\rm d}t}{t}
\leq 2\int_{c_0}^cJ(B(s_n)\times B(ts_n)^c)\frac{{\rm d}t}{t}$$
so that 
$$\sum_{l=n+1}^{N-1}J(B(s_n)\times B(s_l)^c)
\leq 2\int_{c_0}^{\infty}J(B(s_n)\times B(ts_n)^c)\frac{{\rm d}t}{t}.$$
Since the Fubini theorem implies that
\begin{equation*}
\begin{split}
&\int_{c_0}^{\infty}J(B(s_n)\times B(ts_n)^c)\frac{{\rm d}t}{t}
=\int_{c_0}^{\infty}
\left(\iint_{B(s_n)\times B(ts_n)^c}J({\rm d}x\,{\rm d}y)\right)\frac{{\rm d}t}{t}\\
&=\iint_{X \times X} {\bf 1}_{B(s_n)}(x)
\left(\int_{c_0}^{\infty}{\bf 1}_{B(ts_n)^c}(y)\frac{{\rm d}t}{t}\right)\,J({\rm d}x\,{\rm d}y)\\
&=\iint_{B(s_n)\times B(c_0s_n)^c} 
\log\left(\frac{\rho(y)}{c_0s_n}\right)\,J({\rm d}x\,{\rm d}y)=M_2(s_n,c_0s_n),
\end{split}
\end{equation*}
we get
\begin{equation}\label{eq:bound-log-0}
\sum_{l=n+1}^{N-1}J(B(s_n)\times B(s_l)^c)
\leq 2M_2(s_n,c_0s_n).
\end{equation}

Let
$$
\varphi_n(x)=0\vee \left(\frac{s_n-\rho(x)}{s_n-s_{n-1}}\right)\wedge 1.
$$
Then 
$${\cal E}^{(c)}(\varphi_n,\varphi_n)
=\mu_{\langle\varphi_n\rangle}^c(B(s_n))
\leq \frac{1}{(s_n-s_{n-1})^2}M_0(s_n)$$ 
and for any $k,l\in \{1,\dots, N\}$ with $k\ne l$, 
we have ${\cal E}^{(c)}(\varphi_n,\varphi_l)=0$ 
by the strong local property (\cite[p.120]{FOT11}). 
Hence by following the proof of \cite[Theorem 4]{O96} 
and using \eqref{eq:bound-log-0}, 
there exists $c_1>0$ such that  
\begin{equation}\label{eq:e-bound}
\begin{split}
&{\cal E}(\varphi_n,\varphi_n)
+2\sum_{l=n+1}^{N}|{\cal E}(\varphi_n,\varphi_l)|\\
&\leq \frac{1}{(s_n-s_{n-1})^2}M_0(B(s_n))\\
&+\frac{1}{s_n-s_{n-1}}\left(\frac{1}{s_n-s_{n-1}}+\frac{2}{s_{n+1}-s_n}\right)
M_1(s_{n+1})
+10\sum_{l=n+1}^{N-1}J(B(s_n)\times B(s_l)^c)\\
&\leq c_1\left\{\frac{1}{s_n^2}(M_0(s_n)+M_1(s_{n+1}))+M_2(s_n,c_0s_n)\right\}.
\end{split}
\end{equation}
Here we recall that the function $L(r)$ is continuous and slowly varying at infinity. 
Therefore, by the uniform convergence theorem (\cite[Theorem 1.2.1]{BGT89}) 
or \cite[Lemma 5.4]{OU15},   
there exist $C_1>0$ and $C_2>0$ 
for any positive constants $a$, $b$ with $0<a<b$, such that 
$$C_1\leq \frac{L(\lambda r)}{L(r)}\leq C_2, \quad r\geq 1, \ \lambda\in [a,b].$$
Furthermore, since $s_{n+1}/s_{n-1}=c^2$ and $s_n/s_{n-1}=c$ for any $n\geq 1$, 
we see by \eqref{eq:upper-varying} that for some $c_2>0$,
\begin{equation}\label{eq:m-bound}
\frac{1}{s_n^2}(M_0(s_n)+M_1(s_{n+1}))+M_2(s_n,c_0s_n)\leq c_2L(s_{n-1}), \quad n=1,\dots, N.
\end{equation}

Let 
$$
C=\left(\sum_{n=1}^N\frac{1}{L(s_{n-1})}\right)^{-1}, \quad p_n=\frac{C}{L(s_{n-1})}
$$
and 
$$
u_N=\sum_{n=1}^N p_n \varphi_n.
$$
Then \cite[Theorem 1.5.4]{BGT89} shows that  for any $\alpha>0$, 
there exists a nonincreasing  function $\phi$ on $[r_0,\infty)$ such that
$\lim_{r\rightarrow\infty}L(r)/(r^{\alpha}\phi(r))=1$. 
Therefore,   
there exists $c_{\alpha}>0$ such that for any $\kappa>1$, $s\geq r_0$ and $t\geq r_0$ with 
$s\leq t\leq \kappa s$, 
$$L(s)=s^{\alpha}\frac{L(s)}{s^{\alpha}}
\geq c_{\alpha}\left(\frac{t}{\kappa}\right)^{\alpha}\frac{L(t)}{t^{\alpha}}
=\frac{c_{\alpha}}{\kappa^{\alpha}}L(t).$$
Since $L(s)$ is nondecreasing by assumption, 
we have by \eqref{eq:e-bound} and \eqref{eq:m-bound},  
\begin{equation*}
\begin{split}
{\cal E}(u_N,u_N)
&=\sum_{n=1}^N p_n^2{\cal E}(\varphi_n,\varphi_n)
+2\sum_{n=1}^{N-1}p_n\sum_{l=n+1}^{N}p_l{\cal E}(\varphi_n,\varphi_l)\\
&\leq \sum_{n=1}^N p_n^2\left({\cal E}(\varphi_n,\varphi_n)
+2\sum_{l=n+1}^{N}|{\cal E}(\varphi_n,\varphi_l)|\right)\\
&\leq c_3\sum_{n=1}^N\frac{C^2}{L(s_{n-1})}
=c_3 C\leq c_3\left(\frac{1}{c-1}\int_r^R\frac{{\rm d}s}{sL(s)}\right)^{-1}.
\end{split}
\end{equation*}
As ${\mathbf C}(r,R)\leq {\cal E}(u_N,u_N)$, 
we obtain \eqref{eq:capa-u}. 
In particular, if \eqref{eq:test} holds, then $({\cal E},{\cal F})$ is recurrent  
by \eqref{eq:capa-u} and Theorem \ref{thm:capa-ineq}.
\end{proof}

\subsection{Application}
In this subsection, we apply Theorem \ref{thm:test-r} to 
regular Dirichlet forms on $L^2({\mathbb R}^d)$ 
with unbounded/degenerate coefficients. 
Let $({\cal E},{\cal F})$ be a regular Dirichlet form on $L^2({\mathbb R}^d)$ as in Subsection \ref{sub:set} 
with Assumption \ref{assum:lower} replaced by the next assumption:
\begin{assum}\label{assum:upper}
There exists a kernel $J_0(x,{\rm d}y)$ on ${\mathbb R}^d\times {\cal B}({\mathbb R}^d)$ 
satisfying the following{\rm :}
\begin{enumerate}
\item[{\rm (i)}] Assumption {\rm \ref{assum:lower}} holds with the inequality \eqref{eq:jump-lower} 
replaced by the reversed one{\rm ;} for any $x\in {\mathbb R}^d$ and $A\in {\cal B}({\mathbb R})$, 
\begin{equation}\label{eq:jump-upper}
J_0(x,x+A)\leq \nu(A).
\end{equation}
\item[{\rm (ii)}] There exists a positive  Borel measurable function $f$ on $(0,\infty)$ 
such that 
$${\bf 1}_{\{|y|\geq 1\}}\nu({\rm d}y)=f(|y|)\,{\rm d}y.$$
Moreover, there exist positive constants $c_1$, $c_2$, $c_3$, $c_4$ such that   
for any positive constants $a,b$ with $c_1\leq b/a\leq c_2$, then $c_3\leq f(b)/f(a)\leq c_4$.
\end{enumerate}
\end{assum}

Under Assumption \ref{assum:upper}, 
Theorem \ref{thm:test-r} is applicable to $({\cal E},{\cal F})$ because 
it fulfills Assumption \ref{assum:exh} with $\rho(x)=|x|$.

Let $B(r)=\{x\in {\mathbb R}^d \mid |x|<r\}$. 
By the symmetry of the measure $J_0(x,{\rm d}y){\rm d}x$ and \eqref{eq:jump-upper}, 
we have
\begin{equation}\label{eq:s-jump}
\begin{split}
&\iint_{B(s)\times B(s)}|x-y|^2{\bf 1}_{\{|x-y|<1\}}\,J({\rm d}x\,{\rm d}y)\\
&=2\iint_{B(s)\times B(s)}a_1(x)|x-y|^2{\bf 1}_{\{|x-y|<1\}}\,J_0(x,{\rm d}y){\rm d}x\\
&\leq 2\int_{B(s)}a_1(x)\,{\rm d}x\int_{0<|z|<1}|z|^2\,\nu({\rm d}z)
\end{split}
\end{equation}
and
\begin{equation*}
\begin{split}
\iint_{B(s)\times B(s)}|x-y|^2{\bf 1}_{\{|x-y|\geq 1\}}\,J({\rm d}x\,{\rm d}y)
\leq 2\int_{B(s)}a_2(x)\,{\rm d}x\int_{1\leq |z|<2s}|z|^2\,\nu({\rm d}z).
\end{split}
\end{equation*}
Hence 
\begin{equation*}
\begin{split}
&\iint_{B(s)\times B(s)}|x-y|^2\,J({\rm d}x\,{\rm d}y)\\
&\leq 2\left(\int_{B(s)}a_1(x)\,{\rm d}x\int_{0<|z|<1}|z|^2\,\nu({\rm d}z)
+\int_{B(s)}a_2(x)\,{\rm d}x\int_{1\leq |z|<2s}|z|^2\,\nu({\rm d}z)\right).
\end{split}
\end{equation*}

Let $c_0>2$ and $r_0>1$. 
Since $(c_0-1)r_0>1$, we have for any $s\geq r_0$,
$$\iint_{B(s)\times B(c_0s)^c}\log\left(\frac{|y|}{c_0s}\right){\bf 1}_{\{|x-y|<1\}}\,J({\rm d}x \, {\rm d}y)=0.$$
Note that for any $x\in B(s)$ and $y\in B(c_0s)^c$, 
we have $|x-y|/2\leq |y|\leq 2|x-y|$, 
which implies that $f(y)/f(y-x)$ is bounded from below and above by positive constants. 
Then, in a similar way to \eqref{eq:s-jump}, we also obtain
\begin{equation*}
\begin{split}
&\iint_{B(s)\times B(c_0s)^c}\log\left(\frac{|y|}{c_0s}\right){\bf 1}_{\{|x-y|\geq 1\}}\,J({\rm d}x \, {\rm d}y)\\
&\leq \int_{B(s)}a_2(x)\,{\rm d}x
\int_{|y|\geq (c_0-1)s}\log\left(\frac{|y|}{s}\right)\,\nu({\rm d}y)
+\omega_d s^d
\int_{|y|\geq (c_0-1)s}a_2(y)\log\left(\frac{|y|}{s}\right)\,\nu({\rm d}y).
\end{split}
\end{equation*}
In particular,  if we let 
\begin{equation}\label{eq:n(s)}
\begin{split}
N(s)
&=\frac{1}{s^2}\int_{B(s)}a_1(x)\,{\rm d}x\\
&+\int_{B(s)}a_2(x)\,{\rm d}x
\left[\int_{1\leq |y|<s}\left(\frac{|y|}{s}\right)^2 \,\nu({\rm d}y)
+\int_{|y|\geq s}\log\left(\frac{|y|}{s}\right)\,\nu({\rm d}y)\right]\\
&+s^d\int_{|y|\geq s}a_2(y)\log\left(\frac{|y|}{s}\right)\,\nu({\rm d}y),
\end{split}
\end{equation}
then by Theorem \ref{thm:test-r}, 
$({\cal E},{\cal F})$ is recurrent if $\int_{\cdot}^{\infty}(sN(s))^{-1}\,{\rm d}s=\infty$.

\begin{exam}\label{exam:rec}\rm 
We show a recurrence criterion in terms of the coefficient growth at infinity.  
Even though the first example below is already given in \cite[Example 4.4]{OU15} 
under the general setting, 
we state it for the sake of completeness. 
\begin{enumerate}
\item[(1)] 
Let $\alpha\in (0,2)$ and $\beta\in (0,\infty)$. 
Let $J(x,{\rm d}y)$ satisfy Assumption \ref{assum:upper} with 
$$\nu({\rm d}z)
={\bf 1}_{\{0<|z|<1\}}\frac{{\rm d}z}{|z|^{d+\alpha}}
+{\bf 1}_{\{|z|\geq 1\}}\frac{{\rm d}z}{|z|^{d+\beta}}.$$
Assume first that $\beta\ne 2$. Then for some $c>0$,
$$N(s)\leq 
c\left(\frac{1}{s^2}\int_{B(s)}a_1(x)\,{\rm d}x+\frac{1}{s^{\beta\wedge 2}}\int_{B(s)}a_2(x)\,{\rm d}x
+s^d\int_{|y|\geq s}\frac{a_2(y)}{|y|^{d+\beta}}\log\left(\frac{|y|}{s}\right)\,{\rm d}y\right).$$
Hence if there exist $c_1>0$ and $c_2>0$ such that 
for any $x\in {\mathbb R}^d$,
\begin{equation}\label{eq:bound-coeff}
a_1(x)\leq c_1(1+|x|)^{2-d}\log(2+|x|), \quad a_2(x)\leq c_2(1+|x|)^{(\beta\wedge 2)-d}\log(2+|x|),
\end{equation}
then there exists $c_3>0$ such that 
$N(s)\leq c_3\log(2+s)$ for any $s\geq 2$. 
Since $\int_2^{\infty}(sN(s))^{-1}\,{\rm d}s=\infty$, 
$({\cal E},{\cal F})$ is recurrent.

Assume next that $\beta=2$. Then 
$$N(s)\leq 
c\left(\frac{1}{s^2}\int_{B(s)}a_1(x)\,{\rm d}x+\frac{\log s}{s^2}\int_{B(s)}a_2(x)\,{\rm d}x
+s^d\int_{|y|\geq s}\frac{a_2(y)}{|y|^{d+2}}\log\left(\frac{|y|}{s}\right)\,{\rm d}y\right).$$
In particular, if there exist $c_1>0$ and $c_2>0$ such that 
for any $x\in {\mathbb R}^d$,
$$
a_1(x)\leq c_1(1+|x|)^{2-d}\log(2+|x|), \quad a_2(x)\leq c_2(1+|x|)^{2-d}\log(\log(3+|x|)),
$$
then there exists $c_3>0$ such that 
$N(s)\leq c_3\log(2+s)\log\log(2+s)$ for any $s\geq 2$. 
Since $\int_2^{\infty}(sN(s))^{-1}\,{\rm d}s=\infty$,  $({\cal E},{\cal F})$ is recurrent. 

We note that a similar result is valid for $\nu({\rm d}z)
={\bf 1}_{\{0<|z|<1\}}|z|^{-(d+\alpha)}\,{\rm d}z$
or $\nu({\rm d}z)={\bf 1}_{\{|z|\geq 1\}}|z|^{-(d+\beta)}\,{\rm d}z$.

\item[(2)] 
Let $\alpha\in (0,1)$ and $\beta\in (0,\infty)$. 
If  
$$\nu({\rm d}z)
={\bf 1}_{\{0<|z|<1\}}\frac{{\rm d}z}{|z|^{d+2}(\log (2+|z|))^{\alpha}}
+{\bf 1}_{\{|z|\ge 1\}}\frac{{\rm d}z}{|z|^d(\log(2+|z|))^{\beta+2}},$$
then for some $c>0$,
\begin{equation*}
\begin{split}
&N(s)\\
&\leq c
\left(\frac{1}{s^2}\int_{B(s)}a_1(x)\,{\rm d}x+\frac{1}{(\log s)^{\beta}}\int_{B(s)}a_2(x)\,{\rm d}x
+s^d\int_{|y|\geq s}\frac{a_2(y)\log(|y|/s)}{|y|^d\log(2+|y|)^{\beta+2}}\right).
\end{split}
\end{equation*}

If there exist $c_1>0$ and $c_2>0$ such that 
for any $x\in {\mathbb R}^d$,
$$a_1(x)\leq c_1(1+|x|)^{2-d}\log(2+|x|), \quad a_2(x)\leq c_2\frac{(\log (2+|x|))^{\beta}}{(1+|x|)^d},$$
then there exists $c_3>0$ such that 
$N(s)\leq c_3\log(2+s)$ for any $s\geq 2$. 
Since $\int_2^{\infty}(sN(s))^{-1}\,{\rm d}s=\infty$, $({\cal E},{\cal F})$ is recurrent.
\end{enumerate}
\end{exam}

\begin{exam}\label{exam:j-d}\rm 
Let $\{a_{ij}(x)\}_{1\leq i,j\leq d}$ be a family of locally integrable Borel measurable functions on ${\mathbb R}^d$  
such that the matrix $(a_{ij}(x))_{1\leq i,j\leq d}$ is symmetric and nonnegative definite for each $x\in {\mathbb R}^d$. 
Assume that $\{a_{ij}(x)\}_{1\leq i,j\leq d}$ is locally uniformly elliptic 
and $J(x,{\rm d}y)$ satisfies Assumption \ref{assum:regular}. 
Under this assumption, if we define the quadratic form $({\cal E},C_0^{\infty}({\mathbb R}^d))$ by
$${\cal E}(u,u)
=\int_{{\mathbb R}^d}\sum_{i,j=1}^d a_{ij}(x)\frac{\partial u}{\partial x_i}(x)\frac{\partial u}{\partial x_j}(x)\,{\rm d}x
+\iint_{{\mathbb R}^d\times{\mathbb R}^d}(u(x)-u(y))^2\,J(x, {\rm d}y){\rm d}x, 
$$
then it is closable on $L^2({\mathbb R}^d)$ and 
its closure $({\cal E},{\cal F})$ is a regular Dirichlet form on $L^2({\mathbb R}^d)$.

We now assume that 
$$\sum_{i,j=1}^d a_{ij}(x)\xi_i\xi_j\leq c(1+|x|)^{2-d}\log(2+|x|)|\xi|^2, \quad x,\xi\in {\mathbb R}^d$$
for some $c>0$ and $J(x,{\rm d}y)$ satisfies the same condition as in Example \ref{exam:rec} (1). 
Then $({\cal E},{\cal F})$ is recurrent by Theorem \ref{thm:test-r}. 
\end{exam}

\subsection*{Acknowledgments}
The author would like to thank Yoshihiko Matsumoto for valuable discussions 
about the proof of Lemma {\ref{lem:derivative}} (3).
He is also grateful to Jian Wang for valuable comments on the draft,  
which in particular lead to improvements of Subsection \ref{subsect:appl}. 

\end{document}